\newtheorem{theorem}{Theorem}[section]
\theoremstyle{plain}
\newtheorem{corollary}[theorem]{Corollary}
\newtheorem{definition}[theorem]{Definition}
\newtheorem{example}[theorem]{Example}
\newtheorem{lemma}[theorem]{Lemma}
\newtheorem{proposition}[theorem]{Proposition}
\newtheorem{remark}[theorem]{Remark}
\newcommand{\isom}{\cong}
\begin{document}
\title[C*-algebraic Grothendieck rings]{Remarks on computing the Grothendieck rings of C*-algebras}
\author{Elham Ghamari, Colin Ingalls, and Dan Z. Ku\v{c}erovsk\'{y} }
\address
{UNB-F \newline\indent Canada\qquad E3B 5A3}%
\email{eghamari@unb.ca,ColinIngalls@cunet.carleton.ca,dkucerov@unb.ca}%
\thanks{We thank NSERC for financial support. Authour's e-mail address: \texttt{dkucerov@unb.ca}}
\date{\fbox{\textbf{\textsc{\today}}}} 
\subjclass{Primary 47L80, 16T05; Secondary 47L50, 16T20 } %
\keywords{ring of differences, C*-algebras, Grothendieck ring}%
\dedicatory{}
\begin{abstract}

\bigskip

In this paper, we present a captivating construction by Grothendieck, originally formulated for algebraic varieties, and adapt it to the realm of C*-algebras. Our main objective is to investigate the conditions under which this particular class of C*-algebras possesses a nontrivial Grothendieck ring. To achieve this, we explore the existence of nontrivial characters, which significantly enriches our understanding of these algebras. In particular, we conduct a detailed study of rings of C*-algebras over $\mathbb{C}$, $\mathbb{R}$, and $\mathbb{H}$.

 \end{abstract}\maketitle

\newcommand{\compose}{\circ}
\newcommand{\C}{\mathbb C}
\newcommand{\complex}{\C}
\newcommand{\N}{\mathbb N}
\newcommand{\Z}{\mathbb Z}
\newcommand{\R}{\mathbb R}
\newcommand{\M}[1]{M_{#1}(\C)} 
\newcommand{\tensor}{\otimes} 
\newcommand{\mintensor}{\otimes_{min}} 
\renewcommand{\isom}{\cong}
\renewcommand{\l}{\ell}
\hyphenation{mult-i-pli-cat-ive  co-mult-i-pli-cat-ive anti-mult-ipli-ca-tive co-anti-mult-ipli-ca-tive iso-mor-phism co-anti-iso-mor-phism anti-auto-mor-phism anti-auto-mor-phism auto-mor-phi-sms anti-auto-mor-phism  inter-sper-sed}

\section{Introduction}


C*-algebras are rings, sometimes nonunital, obeying certain axioms that ensure a very well-behaved representation theory upon Hilbert space. There are possibly as many as four interesting categories that can be built from C*-algebras. Firstly, where the objects are all C*-algebras and morphisms are all *-homomorphisms
between them. Secondly, where the objects are unital C*-algebras and the morphisms are unital *-homomorphisms.
Thirdly, where the objects are commutative C*-algebras and the morphisms are all *-
homomorphisms between them. Fourthly, where the objects are unital commutative C*-algebras and the morphisms are
the unital *-homomorphisms between them.  Moreover, there are some well-known features of the representation theory leading to subtle questions about norms on tensor products of C*-algebras, and thus to the subclass of nuclear C*-algebras. The question whether all separable nuclear C*-algebras satisfy the Universal Coefficient Theorem (UCT) remains one of the most important open problems in the structure and classification theory of such algebras. Although this problem is to our knowledge open, and has remained open for approximately fifty years, there exists a large supply of C*-algebras  that satisfy the UCT. First, there is a bootstrap process that allows constructing additional algebras satisfying the UCT starting from a few evident examples of algebras that do so. Secondly, there is a long-overlooked result of Tu \cite{Tu} showing that the large class of C*-algebras generated by Hausdorff, locally compact, second countable, and amenable groupoids satisfy the UCT. This has motivated research looking for C*-algebras that are of Tu's form for non-obvious reasons. Indeed, could Tu's class be unexpectedly large? This line of investigation has shown, for example, that separable, nuclear C*-algebras with Cartan subalgebras satisfy the UCT \cite{BarlakLi}.  There do not seem to be any known counterexamples to the UCT conjecture. On the other hand the K\"unneth theorem for tensor products which was first studied by Atiyah in the abelian case \cite{Atiyah} and Schochet in the general (nuclear) case \cite{Schochet1982} contains more classes of C*-algebras and it is still an open problem whether all separable nuclear C*-algebras satisfy the K\"unneth theorem for tensor product.

There is an interesting construction, introduced by Grothendieck in the context of algebraic varieties (see \cite{thesis} and the references there for more information on the case of algebraic varieties). We propose to adapt the construction to the case of C*-algebras.  Given a class of C*-algebras that is closed under tensor products, isomorphisms, formation of extensions, and finite direct sum, we can proceed as follows.  First consider a semigroup with C*-algebras drawn from some universe as the elements, and direct sum as the operation, then form the associated Grothendeick group. Now take the quotient by the equivalence relations
coming from isomorphism and from short exact sequences, meaning that whenever we have a short exact sequence
\[0 \rightarrow A \rightarrow C \rightarrow B\rightarrow 0,\]
we assert that $[C]=[A]+[B].$
Any C*-algebraic tensor product that is exact in the class under consideration will provide a ring product. As is well known, there is more than one tensor product on C*-algebras, and they need not be exact in general. The maximal tensor product, however, is always exact. The class of all C*-algebras with the maximal tensor product is thus an example of a class of C*-algebras that is closed under tensor products, isomorphisms, formation of extensions, and finite direct sums. Naturally, this is not the only class that can be considered. Indeed, Grothedieck's construction can be performed with  objects other than C*-algebras, provided that the categorical necessities are available. On the other hand, a purely categorical approach is limited in what it can tell us about the ring that has been constructed. Thus our aim here is to explore conditions for a class of C*-algebras to have a nontrivial Grothendieck ring, with emphasis on the existence of nontrivial characters. The interest in characters is partially motivated by the fact that just such a program has been quite successful in classical algebraic geometry. For example, the proof that the Grothedieck ring is not a domain rests on finding an appropriate character \cite{poonen}. Our aims here are modest, and we will be content to just show nontriviality of the Grothedieck ring through construction of an appropriate character. As a by-product of a result about ring ideals  (\cref{prop:ideal}) we show that some C*-algebras that are not obviously bootstrap class are in fact in that class.
\subsection{Organization of this paper}
The basic construction of Grothendieck rings for C*-algebras is introduced in section 2. We then develop in the next several sections a theory for constructing characters on Grothendieck rings, and our most advanced result in this direction is probably \cref{th:countably.generated.case}. We discuss ideals of Grothendieck rings in section 8. Section 9 focuses on examples, and in section 10 and the following, we turn to the case of real C*-algebras. In section 12 we study the particularly elegant case of real C*-algebras over the quaternions.

\section{The fundamental examples}
We now look more closely at two examples using C*-algebras.

Let us, then, for the moment  denote by $\mathcal C$ the proper class of all C*-algebras. We can avoid classes while reaching similar conclusions if we consider instead the set of all C*-algebras on a fixed Hilbert space.This forms a semigroup under direct sum, and there is an enveloping group, which may of course be trivial. We declare that whenever we have a short exact sequence
\[0 \rightarrow A \rightarrow C \rightarrow B\rightarrow 0,\]
we have $[C]=[A]+[B]$. Moreover, isomorphic C*-algebras by definition give the same semigroup element.  The maximal tensor product of C*-algebras is known to always be exact, and thus it respects the equivalence relation provided by short exact sequences. Therefore, we obtain a ring.


We now give a simple argument that shows this ring is trivial.  This argument will ultimately be used to support the perhaps surprising heuristic that large classes of C*-algebras tend to generate trivial or small Grothendieck rings. In C*-algebra theory we are permitted the construction of infinite direct products or sums.

\begin{theorem}
If a class of C*-algebras $\mathcal C$ is closed under the formation of either infinite products or infinite sums, then the Grothendieck ring of $\mathcal C$ is trivial.
\end{theorem}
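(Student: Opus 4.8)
The plan is to deploy an Eilenberg-swindle type argument, exploiting the fact that an infinite sum (or product) of copies of a fixed algebra absorbs one more copy without changing its isomorphism class. Fix an arbitrary object $A \in \mathcal C$; it suffices to show that $[A] = 0$ in the underlying Grothendieck group, since such classes generate the ring and the ring's additive structure is exactly that of the group.

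Suppose first that $\mathcal C$ is closed under the formation of infinite sums. Set $S := \bigoplus_{n=1}^{\infty} A$, the $c_0$-direct sum of countably many copies of $A$, which lies in $\mathcal C$ by hypothesis. The key observation is the ``shift'' isomorphism $S \isom A \oplus S$: reindexing the summands and inserting the extra copy of $A$ in a new first coordinate is a bijection of index sets that induces a coordinatewise $*$-isomorphism, manifestly isometric for the supremum norm and carrying the $c_0$ decay condition along unchanged. Since the semigroup operation underlying the Grothendieck construction is direct sum, we have $[A \oplus S] = [A] + [S]$; combined with $A \oplus S \isom S$ this yields $[A] + [S] = [S]$ in the semigroup, and hence $[A] = 0$ after passing to the enveloping group, where cancellation is available.

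If instead $\mathcal C$ is closed under infinite products, the same reasoning applies verbatim with $S$ replaced by $P := \prod_{n=1}^{\infty} A$, the $\ell^{\infty}$-product of countably many copies of $A$; the shift map $P \isom A \times P$ is again a coordinate-permuting $*$-isomorphism, giving $[A] + [P] = [P]$ and thus $[A] = 0$. In either case every generator of the Grothendieck ring vanishes, so the ring is trivial.

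The step requiring the most care is verifying that the shift is a genuine $*$-isomorphism of C*-algebras rather than a mere algebraic bijection: one must confirm that the coordinate reindexing is isometric, respects the involution, and preserves the $c_0$ (respectively bounded) constraint defining the sum (respectively product). This is routine once the reindexing is fixed, and it is worth emphasizing that the short-exact-sequence relations play no role here, the swindle needing only additivity of direct sum together with the absorbing isomorphism. A minor point is the set-theoretic framing: working, as in the preceding discussion, inside the C*-algebras on a fixed Hilbert space keeps all of the $S$ and $P$ legitimately within the universe under consideration.
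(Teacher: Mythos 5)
Your proposal is correct and is essentially the paper's own argument: both run the Eilenberg swindle with $D:=\prod_1^\infty A$ (or $\bigoplus_1^\infty A$), use the shift isomorphism $A\oplus D\isom D$, and cancel in the enveloping group to get $[A]=0$. The additional care you take in checking that the shift is a genuine $*$-isomorphism and in noting that the exact-sequence relations are not needed is a welcome refinement but does not change the substance.
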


\begin{proof}
 We recall that the direct product $\prod^\infty_1 A_i$ of countable many C*-algebras $A_i$ is a C*-algebra, given explicitly as the algebra of all bounded sequences $a_i$ with $a_i \in A_i$.
But then, given any C*-algebra A, the infinite direct product $D:=\prod^\infty_1 A,$ has the interesting property that $A\oplus D$ is isomorphic as a C*-algebra to $D.$  Since Grothendieck rings have isomorphism as one of their equivalence relations, if a C*-algebra $A$ and  $D$ constructed as above belongs to such a ring, it will then follow that $A=0$ in the ring. This is because the additive structure of such a ring is an abelian group. In other words, since $A\oplus D=D$ we have $[A\oplus D]=[D]$, and then $[A]+[D]=[D]$ which implies that every element in the ring is trivial. The case of direct sums is very similar.
\end{proof}

On the other hand, the familiar class of C*-algebras given by the AF algebras with finitely generated K-theory has a nontrivial Grothendieck ring. In the classical examples \cite{thesis}, it is usual to obtain information about the Grothedieck rings through their characters. This motivates us to look for a character, and to verify that the character takes nonzero values, thus implying that the Grothedieck ring is nonzero. First of all we define a possible character on the set of C*-algebras, and then we check that the definition does indeed pass to the Grothedieck ring.
We define a candidate character $\chi$ by employing the free part of the $K_0$-group of the AF algebra.This definition is compatible with isomorphism of AF-algebras, because isomorphic AF algebras have isomorphic K-theory groups. This definition  is also compatible with direct sums. Now, in the special case of AF-algebras, the $K_0$ functor is actually exact, and thus an exact sequence \[0 \rightarrow A \rightarrow C \rightarrow B\rightarrow 0\] gives an exact sequence of finitely generated abelian groups:
\[0 \rightarrow K_0 (A) \rightarrow K_0 (C) \rightarrow K_0 (B)\rightarrow 0.\]
Now tensoring with $\mathbb{Q}$ we get the following short exact sequence which splits:
\[0 \rightarrow K_0 (A) \otimes \mathbb{Q}  \rightarrow K_0 (C) \otimes \mathbb{Q} \rightarrow K_0 (B)\otimes \mathbb{Q} \rightarrow 0.\]
The above groups are a finite direct sum of copies of $\mathbb{Z} \otimes \mathbb{Q}$, and from the exactness of the sequence it follows that $\chi(C)=\chi(A)+\chi(B).$
 Multiplicativity is a deeper result. For AF algebras with finitely generated K-theory, the K\"unneth formula, to be further discussed,  implies that $K_0(A)\otimes K_0(B)=K_0(A\otimes B).$
This shows that there exists a nontrivial character of this Grothendieck ring, and thus the Grothendeick ring must be nontrivial.

Maybe we should point out that there is something like a simpler equivalent form of the above Grothedieck ring,  for essentially categorical reasons. The AF algebras are, as is well-known, classified by K-theory \cite{elliottAF}. As was discussed above, in the case of AF algebras, the $K_0$ functor is compatible with the equivalence relations that define the Grothedieck ring.  Thus the $K_0$ functor gives a well-defined map of Grothedieck rings, and therefore, the above Grothedieck ring is  isomorphic to the more elementary Grothedieck ring constructed from the finitely generated abelian dimension groups.
On the other hand, since  the rings are isomorphic, it is a question of viewpoint which picture one prefers.

In our two examples, the class of C*-algebras used had in both cases some very strong stability properties. This is more or less a tautology for the first example, where we considered the class of all C*-algebras. The second example needs some discussion. First of all, an ideal or a quotient of an AF algebra is again an AF algebra, with finitely generated K-theory if we began with an algebra having finitely generated K-theory. It is also true that if the ideal and quotient algebras in an extension are AF, then the extension algebra is AF.  Moreover, an algebra is AF exactly when a finite set of elements  can be approximated in norm by a finite-dimensional subalgebra. This characterization is preserved under isomorphism, thus, an algebra isomorphic to an AF algebra is itself AF. These many properties are more than enough to allow the needed equivalence relations, thus we can form a Grothedieck ring. This is for the following reasons: Firstly, AF algebras are nuclear, thus there is only one tensor product, up to isomorphism. Indeed, the tensor product of AF algebras is AF. If we have a short exact sequence as above, with middle element $C$ in the class, then the ideal $A$ and the quotient $B$ are again in the class. For completeness, we recall that this class of C*-algebras satisfy the K\"unneth formula for tensor products in $K$-theory \cite{Schochet1982}, which means that there is a short exact sequence:

  \begin{multline*}0 \rightarrow K_0(A) \otimes K_0(A) \oplus K_1(A) \otimes K_1(A) \rightarrow K_0(A \otimes A) \rightarrow\\ \mathrm{Tor}(K_0(A),K_1(A)) \oplus \mathrm{Tor}(K_1(A),K_0(A)) \rightarrow 0.\end{multline*}

 It follows that from the fact that for an AF algebra, $K_1$ is trivial, that $$K_0(A) \otimes K_0(A) \rightarrow K_0(A \otimes A)$$ is injective and surjective, thus an isomorphism $$t:K_0(A) \otimes K_0(A) \rightarrow K_0(A \otimes A).$$ In other words, the $K_0$ functor is multiplicative on AF algebras.

\section{On the UCT and the KTP class}
As defined by Rosenberg and Schochet in \cite{SchochetRosenberg}, a separable C*-algebra A is said to satisfy the
universal coefficient theorem (UCT) if for every separable C*-algebra B, the following sequence is exact
\[0 \rightarrow {\rm Ext}(K_* (A) , K_*(B)) \rightarrow KK_* (A,B) \rightarrow\\ {\rm Hom}(K_*(A),K_* (B))\rightarrow 0.\]
where the right hand map is the natural one and the left hand map is the inverse of a map
that is always defined.

On the other hand, Kirchberg has shown that every separable, nuclear C*-algebra is KK-equivalent to a Kirchberg algebra, see \cite[Theorem I]{KirchToAppear}. Recall that Kirchberg algebras are separable simple nuclear and purely infinite C*-algebras, and a simple C*-algebra is purely infinite if every nonzero hereditary sub-C*-algebra contains an infinite projection, where an infinite projection is a projection which is Murray-von Neumann equivalent to a proper sub-projection of itself.
\begin{theorem}(Kirchberg)
The UCT holds for all separable nuclear C*-algebras if and only if it holds for all unital Kirchberg algebras.
\end{theorem}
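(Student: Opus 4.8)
The plan is to deduce the statement from the cited theorem of Kirchberg, that every separable nuclear C*-algebra is $KK$-equivalent to a Kirchberg algebra \cite{KirchToAppear}, together with the fact that the UCT is an invariant of $KK$-equivalence. The forward implication is immediate: a unital Kirchberg algebra is by definition separable, simple, nuclear, and purely infinite, hence in particular separable and nuclear, so if the UCT holds for all separable nuclear C*-algebras it certainly holds for all unital Kirchberg algebras. The content is entirely in the reverse implication, and I would organize it in three steps: (i) replace an arbitrary separable nuclear $A$ by a $KK$-equivalent Kirchberg algebra, (ii) replace that Kirchberg algebra by a $KK$-equivalent \emph{unital} one, and (iii) transport the UCT back along this chain of $KK$-equivalences.

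For step (i) I would simply invoke Kirchberg's theorem to produce a Kirchberg algebra $B$ with $A \sim_{KK} B$. Step (ii) is the only place requiring a construction, since Kirchberg's theorem need not deliver a unital algebra. Here I would use a full-corner argument. Because $B$ is purely infinite and simple, every nonzero hereditary subalgebra --- in particular $B$ itself --- contains a nonzero projection $q$; and since $B$ is simple, this $q$ is automatically full. The corner $qBq$ is then unital with unit $q$, and it inherits simplicity, nuclearity, pure infiniteness, and separability from $B$, so $qBq$ is a unital Kirchberg algebra. Moreover, $q$ being a full projection in the separable algebra $B$ makes $qBq$ strongly Morita equivalent to $B$ (via the imprimitivity bimodule $qB$), and strong Morita equivalence of separable C*-algebras yields stable isomorphism and hence $KK$-equivalence; thus $B \sim_{KK} qBq$ with $qBq$ unital Kirchberg.

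Step (iii), and the main obstacle to watch, is the assertion that the UCT passes across $KK$-equivalence. I would invoke the Rosenberg--Schochet characterization that a separable C*-algebra satisfies the UCT precisely when it is $KK$-equivalent to a commutative (equivalently, bootstrap-class) C*-algebra; in particular the property of satisfying the UCT depends only on the $KK$-equivalence class. Granting this, the chain $A \sim_{KK} B \sim_{KK} qBq$ together with the hypothesis that the unital Kirchberg algebra $qBq$ satisfies the UCT forces $A$ to satisfy the UCT, completing the reverse direction. The delicate points to verify carefully are that pure infiniteness and simplicity genuinely descend to the corner $qBq$, and that the $KK$-invariance of the UCT is applied in the correct form; neither is deep, but both are essential to closing the argument.
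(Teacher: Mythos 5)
Your argument is correct. The paper does not actually prove this statement --- it is quoted as a theorem of Kirchberg --- but the surrounding discussion makes clear that the intended route is exactly yours: combine Kirchberg's theorem that every separable nuclear C*-algebra is $KK$-equivalent to a Kirchberg algebra with the closure of the UCT class under $KK$-equivalence (via the Rosenberg--Schochet characterization). Your full-corner step (ii), reducing to a \emph{unital} Kirchberg algebra via a full projection and the Brown--Green--Rieffel theorem, correctly supplies the one detail the paper's discussion leaves implicit.
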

 Thus the famous UCT conjecture becomes equivalent to the conjecture that every Kirchberg algebra is KK-equivalent to a commutative C*-algebra, or to the conjecture that every Kirchberg algebra satisfies the UCT.

The UCT is known to hold for a large class of C*-algebras. We therefore define the term UCT class to mean the class of C*-algebras $A$ for which the universal coefficient theorem (UCT) holds.  The UCT class is closed under KK-equivalence, tensor product, inductive limits, two out of three in a short exact sequence and under crossed products by $\mathbb{Z}$ or $\mathbb{R}$ \cite{SchochetRosenberg}.

Rosenberg and Schochet showed that a C*-algebra is in the UCT class if and only if it is KK-equivalent to a commutative C*-algebra. In the absence of a more effective characterization of the UCT class, it is conventional to define
a subclass of the  UCT class called the bootstrap class $N$. The bootstrap class $N$ is smaller than the UCT class, but it has a practical definition that is sometimes feasible to check: it is generated by the C*-algebra $\complex$ and the equivalence relations given by stable isomorphism, closure under inductive limits, closure under the two out of three property, and closure under crossed product by $\mathbb{Z}.$
We note that the terms bootstrap class and UCT class are not always used in the same sense by all authours.
 Let us point out that, with our terminology, the bootstrap class $N$ appears to be properly contained in the UCT class. As previously mentioned,  work of Tu \cite{Tu}, see also \cite{Oyono}, in fact showed that a large class of groupoid algebras that do not seem to be in the bootstrap class are definitely in the UCT class. See also \cref{prop:ideal}.

The main results about UCT algebras that we will  use are the following K\"unneth-type  formulas for tensor products in $K$-theory. The first one just summarizes the above discussion:

  \begin{theorem} \cite{SchochetRosenberg} Let $A$ be in the UCT class. Then there is a short exact sequence:
\[0 \rightarrow {\rm Ext}(K_* (A) , K_*(B)) \rightarrow KK_* (A,B) \rightarrow\\ {\rm Hom}(K_*(A),K_* (B))\rightarrow 0.\]
	\end{theorem}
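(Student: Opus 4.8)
The plan is to read this statement, as the authors themselves signal with the phrase ``just summarizes the above discussion,'' as essentially a repackaging of the definition of the UCT class together with the cited Rosenberg--Schochet characterization; the genuine mathematical content is the original Rosenberg--Schochet argument, which I would reconstruct as follows. First I would exploit the characterization that $A$ lies in the UCT class if and only if it is $KK$-equivalent to a commutative C*-algebra, together with the bootstrap description of the class as generated by $\C$ under stable isomorphism, inductive limits, the two-out-of-three property, and crossed products by $\Z$. Since the middle term $KK_*(A,B)$ and the outer terms (which depend on $A$ only through the graded group $K_*(A)$) are all invariant under $KK$-equivalence in the $A$-variable, it suffices to establish exactness of the sequence for the generators and then to check that exactness propagates along each closure operation.

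The base case is $A=\C$: here $K_*(\C)\cong\Z$ concentrated in degree zero, so $\mathrm{Ext}(K_*(\C),K_*(B))=0$ and $\mathrm{Hom}(K_*(\C),K_*(B))\cong K_*(B)$, while the canonical identification gives $KK_*(\C,B)\cong K_*(B)$; the sequence then collapses to an evident isomorphism and is trivially exact. For a general $A$ in the bootstrap class I would invoke a geometric (that is, $KK$-theoretic) free resolution: a short exact sequence of C*-algebras whose associated six-term $KK$-sequence realizes an algebraic free resolution $0\to F_1\to F_0\to K_*(A)\to 0$ of the $K$-theory of $A$. Applying $KK_*(-,B)$ yields a long exact sequence, applying $\mathrm{Hom}(-,K_*(B))$ and $\mathrm{Ext}(-,K_*(B))$ to the resolution yields the algebraic datum, and naturality of the canonical map $KK_*(A,B)\to\mathrm{Hom}(K_*(A),K_*(B))$ identifies a commuting ladder between the two; a five-lemma argument then gives exactness of the UCT sequence for $A$. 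Closure of the bootstrap class under the listed operations, combined with compatibility of the six-term sequences with those operations, extends exactness from the generators to the whole class by an inductive (bootstrapping) argument.

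The hard part will be the construction of the geometric resolution, and the verification that the resulting square genuinely commutes so that the five-lemma applies; in effect this is where one must pass from an abstract algebraic resolution of $K_*(A)$ to an honest C*-algebraic short exact sequence whose boundary map realizes the algebraic connecting homomorphism. In the context of the present paper, however, all of this technical work is absorbed into the cited Rosenberg--Schochet theorem, so for our purposes it is enough to read the statement as an immediate consequence of the defining property of the UCT class, recorded here for later use in the K\"unneth-type arguments.
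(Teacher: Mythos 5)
The paper offers no proof of this theorem: with its definition of the UCT class as ``the class of C*-algebras $A$ for which the UCT holds,'' the statement is a restatement of that definition, and the substantive content (that the class is large and coincides with the algebras $KK$-equivalent to commutative ones) is delegated entirely to the cited Rosenberg--Schochet reference. You correctly identify this, and your sketch of the underlying argument --- reduction to the bootstrap generators via $KK$-invariance of all three terms, geometric free resolutions realizing $0\to F_1\to F_0\to K_*(A)\to 0$, and a five-lemma comparison of the six-term $KK$-sequence with the algebraic $\mathrm{Hom}/\mathrm{Ext}$ sequence --- is a faithful outline of the actual proof in the literature, so your treatment is consistent with (and more informative than) the paper's.
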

In the above sequence, $K_*$ represents a graded direct sum of the usual complex $K$-theory groups $K_0$ and $K_1.$ The first map in the exact sequence  has degree 0 and the second map has degree 1. To the best knowledge of the authors, there is no example where the above sequence fails if we choose the tensor product to be the maximal tensor product of C*-algebras, and it seems possible that in this case such a sequence may always hold.
There exist related sequences that can be proved similarly to the UCT:
	
  \begin{theorem} [\protect{K\"unneth theorem \cite{SchochetRosenberg}}]   Let $A$ be in the UCT class, and suppose that one of $A$ \emph{or} $B$ has finitely generated K-theory. Then there is a short exact sequence:
  \begin{align*}
    0 \rightarrow K^{*}(A) \otimes K_{*}(B) \xrightarrow\alpha KK_{*}(A,B) \xrightarrow\beta  \mathrm{Tor}_1 (K^{*}(A),K_*(B))  \rightarrow 0
  \end{align*}
  which is natural in each variable. The map $\alpha$ has degree $0$ and the map $\beta$ has degree $1$.
	\end{theorem}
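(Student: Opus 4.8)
The statement is the $KK$-theoretic K\"unneth theorem, and the natural plan is the geometric-resolution method of Rosenberg and Schochet. Throughout I read $K^*(A)$ as the K-homology $KK_*(A,\C)$ and $K_*(B)$ as $KK_*(\C,B)$, so that $\alpha$ is induced by the Kasparov product. My first step is to construct $\alpha$: for homogeneous classes $x\in KK_i(A,\C)$ and $y\in KK_j(\C,B)$ the intersection product $x\otimes_\C y$ lands in $KK_{i+j}(A,B)$, is biadditive and $\Z$-balanced, and is natural in both variables; hence it descends to a degree-$0$ graded homomorphism $\alpha\colon K^*(A)\otimes K_*(B)\to KK_*(A,B)$. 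Naturality and the degree assertion for $\alpha$ are immediate from the functoriality and grading of the Kasparov product.

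The core of the argument is a d\'evissage reducing to the case of free $K$-theory. Because every object of the UCT class is $KK$-equivalent to one in the bootstrap class, and because $KK_*(-,B)$, the tensor term, and the $\mathrm{Tor}_1$ term are all $KK$-invariant, I may assume $A$ lies in the bootstrap class. For the base case I take $K_*(A)$ free and finitely generated; using the UCT I exhibit $A$ as $KK$-equivalent to a finite direct sum of copies of $\C$ and of its suspension $C_0(\R)$. On such a sum $\alpha$ unwinds, via $KK_*(\C,B)=K_*(B)$ and $KK_*(C_0(\R),B)=K_{*+1}(B)$, into the canonical isomorphism, so $\alpha$ is an isomorphism; and since $K^*(A)$ is free, $\mathrm{Tor}_1(K^*(A),K_*(B))=0$, so the sequence is trivially exact here. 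It is precisely at this step that the finite-generation hypothesis is essential: an infinite free $K$-theory would replace the direct sum $K^*(A)\otimes K_*(B)$ by a direct product on the $KK$ side and destroy the isomorphism. The symmetric form of the hypothesis lets us instead resolve $B$ when it is $B$ whose $K$-theory is finitely generated.

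For general $A$ I build a geometric resolution. I choose a free resolution $0\to F_1\to F_0\to K_*(A)\to 0$ of the graded abelian group $K_*(A)$; length one suffices since $\Z$ has global dimension $1$, which is also the reason only $\otimes$ and $\mathrm{Tor}_1$ occur. I realize $F_0,F_1$ as the $K$-theory of bootstrap algebras $A_0,A_1$ with free $K$-theory, and invoke the UCT to lift the algebraic map $F_1\to F_0$ to a class in $KK(A_1,A_0)$ whose mapping cone is $KK$-equivalent to $A$. Applying the contravariant functor $KK_*(-,B)$ to the associated distinguished triangle produces a long exact sequence whose connecting map raises degree by one; naturality of $\alpha$ maps this sequence onto the algebraic long exact sequence obtained by tensoring the free resolution with $K_*(B)$. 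Since $\alpha$ is an isomorphism for $A_0$ and $A_1$ by the base case, a diagram chase (five-lemma together with the snake connecting homomorphism) identifies $\ker\alpha$ with $0$ and $\operatorname{coker}\alpha$ with $\mathrm{Tor}_1(K^*(A),K_*(B))$, the degree shift of the connecting map accounting for the degree-$1$ claim on $\beta$. This yields the asserted short exact sequence.

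The main obstacle is the construction of the geometric resolution, that is, realizing the purely algebraic free resolution of $K_*(A)$ by an honest $KK$-morphism of C*-algebras with free $K$-theory and controlling its mapping cone; this is exactly where the UCT hypothesis on $A$ is consumed and where the bootstrap structure does the real work. Everything after it is formal homological algebra.
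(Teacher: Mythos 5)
The paper does not actually prove this statement: it is quoted verbatim from Rosenberg--Schochet \cite{SchochetRosenberg} and used as a black box, so there is no in-paper argument to compare yours against. Judged on its own terms, your sketch is the standard geometric-resolution proof from that reference, and the overall architecture --- define $\alpha$ by the Kasparov product, reduce to the bootstrap class by $KK$-invariance of all three terms, verify the base case for free finitely generated $K$-theory, then run a length-one geometric resolution through the triangulated long exact sequence and a five-lemma/snake comparison --- is the right one and is essentially correct for the case where $K_*(A)$ is finitely generated.

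The one genuine gap is your treatment of the case where only $B$ has finitely generated $K$-theory. You dispose of it with ``the symmetric form of the hypothesis lets us instead resolve $B$,'' but the situation is not symmetric: the UCT/bootstrap hypothesis sits on $A$ alone, and $B$ is arbitrary. If you resolve $B$ by a triangle $C_x \to B_0 \to B$ with $B_0$ a finite direct sum of copies of $\mathbb{C}$ and its suspension, the mapping cone $C_x$ has free finitely generated $K$-theory but is built from $B$ and therefore need not lie in the bootstrap class, so your base case --- which identifies the resolved algebra with a direct sum of points up to $KK$-equivalence --- does not apply to it. The correct repair is to prove a different base case: for $A$ in the bootstrap class and \emph{any} $D$ with free finitely generated $K$-theory, the UCT for $A$ exhibits both $KK_*(A,D)$ and $K^*(A)\otimes K_*(D)$ as extensions of $\mathrm{Hom}(K_*(A),\mathbb{Z})^k$ by $\mathrm{Ext}(K_*(A),\mathbb{Z})^k$, compatibly with $\alpha$, whence $\alpha$ is an isomorphism by the five lemma. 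A secondary, more cosmetic point: in the d\'evissage you resolve $K_*(A)$ but the statement's $\mathrm{Tor}_1$ term involves the $K$-homology $K^*(A)$; one must pass through the dual sequence induced on $KK_*(-,\mathbb{C})$ (or use finite generation to identify the two $\mathrm{Tor}$ computations), which your sketch elides.
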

In the above theorem, some finite generation hypothesis seems necessary, because of Elliott's counterexample, in which both $A$ and $B$ are taken to be UHF algebras with dimension group (\textit{i.e.} the first K-theory group) given by the additive group of the rational numbers $\mathbb{Q}.$ The next theorem, however, does not need a finite generation hypothesis, and will be especially important to our work:
 \begin{theorem}[\protect{KTP theorem\cite{SchochetRosenberg}}]
 Let $A$ \emph{or} $B$ be in the UCT class. Then there is a short exact sequence:
 \begin{align*}
    0 \rightarrow K_{*}(A) \otimes K_{*}(B) \xrightarrow\alpha K_{*}(A\otimes B) \xrightarrow\beta  \mathrm{Tor}_1 (K_{*}(A),K_*(B))  \rightarrow 0.
  \end{align*}

 \end{theorem}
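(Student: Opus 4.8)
The plan is to exploit the bootstrap-type generation of the UCT class together with a geometric resolution of the K-theory of the factor lying in that class. By the commutativity of the tensor product (and the corresponding symmetry of the graded groups $K_*(A)\otimes K_*(B)$ and $\mathrm{Tor}_1(K_*(A),K_*(B))$), I may assume without loss of generality that it is $A$ that lies in the UCT class, and fix an arbitrary second algebra $B$. The map $\alpha$ should be taken to be the graded external K-theory product, sending $[p]\otimes[q]$ to $[p\otimes q]$; its well-definedness and naturality in both variables are standard and require no hypothesis on $A$ or $B$. The entire content of the theorem is therefore the exactness of the displayed sequence, and in particular the identification of the cokernel of $\alpha$ with the Tor term.

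First I would realise a free resolution of the graded abelian group $K_*(A)$ geometrically. Starting from a free resolution of graded groups
\[0 \rightarrow P_1 \xrightarrow{d} P_0 \rightarrow K_*(A) \rightarrow 0,\]
I would produce C*-algebras $A_0$ and $A_1$ in the bootstrap class with $K_*(A_i)$ equal to the free graded group $P_i$ — concretely, countable direct sums of copies of $\complex$ and of its suspension $S\complex$ — together with a $KK$-class inducing $d$, represented (after stabilising if necessary) by a genuine $*$-homomorphism $f\colon A_1 \rightarrow A_0$. Forming the mapping cone $C_f$ gives an algebra with $K_*(C_f)\isom K_*(A)$; since $C_f$ and $A$ both lie in the bootstrap class and have isomorphic K-theory, the UCT upgrades this isomorphism to a $KK$-equivalence $C_f\sim A$. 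The point of passing to $C_f$ is that it sits in a short exact sequence whose outer terms, being a suspension of $A_0$ and the algebra $A_1$, have free K-theory.

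Next I would tensor the mapping-cone sequence with $B$. Because $A_0$ and $A_1$ are nuclear, tensoring this short exact sequence by the arbitrary algebra $B$ again yields a short exact sequence of C*-algebras, and hence a cyclic six-term exact sequence in K-theory. On the outer terms the Künneth map is an isomorphism $K_*(A_i\otimes B)\isom P_i\otimes K_*(B)$, since $P_i$ is free; this is the trivial base case of the formula. Feeding these identifications into the long exact sequence, the cokernel of $P_1\otimes K_*(B)\rightarrow P_0\otimes K_*(B)$ is $K_*(A)\otimes K_*(B)$ by right-exactness of the tensor product, while its kernel is $\mathrm{Tor}_1(K_*(A),K_*(B))$ by the very definition of Tor through the free resolution. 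Extracting these from the long exact sequence produces exactly the asserted short exact sequence, and a diagram chase confirms that the injective map coincides with the external product $\alpha$. Naturality in each variable then follows from the functoriality of the mapping-cone construction and of the K-theory long exact sequence.

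The main obstacle is the geometric step: realising a free resolution of $K_*(A)$ by nuclear bootstrap algebras and, crucially, promoting the resulting K-theory isomorphism $K_*(C_f)\isom K_*(A)$ to a genuine $KK$-equivalence. This is precisely the place where the UCT hypothesis enters, and it is the step that breaks down for algebras outside the UCT class. The nuclearity of the resolving algebras $A_0$ and $A_1$ is what permits the arbitrary second variable $B$ to be carried along exactly under the tensor product, and this is the structural reason why only one of the two factors is required to lie in the UCT class.
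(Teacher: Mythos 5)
The paper offers no proof of this statement: it is imported verbatim from Rosenberg--Schochet \cite{SchochetRosenberg} (with the geometric-resolution technique going back to \cite{Schochet1982}), so there is no internal argument to compare yours against. What you have written is, in outline, precisely the standard proof from those sources: resolve $K_*(A)$ by a free graded group, realise the resolution by bootstrap algebras with free K-theory and a mapping cone, use the UCT hypothesis to promote the K-theory isomorphism $K_*(C_f)\isom K_*(A)$ to a $KK$-equivalence, tensor the cone sequence with $B$, and read off $\mathrm{coker}$ and $\ker$ of $d\otimes 1$ as the tensor and Tor terms. The skeleton is sound. Two steps are glossed over and are worth naming. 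First, having the $KK$-equivalence $C_f\sim A$ is not yet enough: to replace $K_*(A\otimes B)$ by $K_*(C_f\otimes B)$ you must push the equivalence through the second variable via Kasparov's operation $\tau_B\colon KK(C_f,A)\to KK(C_f\otimes B, A\otimes B)$; this is routine but it is exactly the point where the choice of tensor product and the separability of $B$ enter, and it should be stated. Second, the mapping cone sits in $0\to SA_0\to C_f\to A_1\to 0$, so the six-term sequence identifies $K_*(C_f)$ with $\mathrm{coker}(d)$ only up to a degree shift; one fixes this by resolving a suspension or by adjusting the grading of $P_0,P_1$, and without that adjustment the final sequence lands in $K_{*+1}(A\otimes B)$ rather than $K_*(A\otimes B)$. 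Your appeal to nuclearity of $A_0,A_1$ to keep the cone sequence exact after $\otimes_{\min}B$ is correct and is indeed the structural reason only one factor needs the UCT hypothesis.
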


 The map $\alpha$ in the KTP has degree $0$ and the map $\beta$ has degree $1$, so we rewrite the KTP  theorem in terms of the following sequences, as we use them frequently in our proofs.

 \begin{theorem} \label{KTP} Let $A$ be in the KTP class. Then there are short exact sequences:
  \begin{multline*} 0 \rightarrow K_0(A) \otimes K_0(B) \oplus K_1(A) \otimes K_1(B) \rightarrow K_{0}(A \tensor B) \rightarrow \\ \mathrm{Tor}(K_0(A),K_1(B)) \oplus \mathrm{Tor}(K_1(A),K_0(B))
  \rightarrow 0 \end{multline*}
  and
   \begin{multline*} 0 \rightarrow K_0(A) \otimes K_1(B) \oplus K_1(A) \otimes K_0(B) \rightarrow K_{1}(A \tensor B) \rightarrow \\ \mathrm{Tor}(K_0(A),K_0(B)) \oplus \mathrm{Tor}(K_1(A),K_1(B))
  \rightarrow 0. \end{multline*}\label{th:KPT}
	\end{theorem}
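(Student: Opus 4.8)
The plan is to deduce both sequences directly from the single $\Z/2$-graded short exact sequence furnished by the preceding KTP theorem, simply by decomposing it according to degree. Recall that $K_*(A)=K_0(A)\oplus K_1(A)$ carries a $\Z/2$-grading, and that for $\Z/2$-graded abelian groups both the tensor product and the $\mathrm{Tor}_1$ functor inherit gradings in which the degree-$k$ component is the sum of the pieces $M_i\tensor N_j$, respectively $\mathrm{Tor}(M_i,N_j)$, taken over all $i+j\equiv k \pmod 2$. Thus the degree-$0$ part of $K_*(A)\tensor K_*(B)$ is $K_0(A)\tensor K_0(B)\oplus K_1(A)\tensor K_1(B)$ while its degree-$1$ part is $K_0(A)\tensor K_1(B)\oplus K_1(A)\tensor K_0(B)$, with the analogous description for $\mathrm{Tor}_1(K_*(A),K_*(B))$.

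The key observation is an elementary fact about graded exact sequences. Suppose
\[0 \to X \xrightarrow{\alpha} Y \xrightarrow{\beta} Z \to 0\]
is a short exact sequence of $\Z/2$-graded abelian groups in which $\alpha$ has degree $0$ and $\beta$ has degree $1$. Then for each $n\in\Z/2$ the homogeneous components fit into a short exact sequence
\[0 \to X_n \xrightarrow{\alpha} Y_n \xrightarrow{\beta} Z_{n+1} \to 0.\]
Indeed, injectivity of $\alpha$ survives restriction to $X_n$; since $\alpha$ preserves degree and $\beta$ raises it by one, the identity $\ker\beta=\operatorname{im}\alpha$ restricts in degree $n$ to $\ker(\beta|_{Y_n})=\alpha(X_n)$; and surjectivity of $\beta$ shows $\beta|_{Y_n}$ maps onto $Z_{n+1}$. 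This is a one-line diagram chase using only that the two maps are homogeneous of the stated degrees.

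I would then apply this to the graded KTP sequence, taking $X=K_*(A)\tensor K_*(B)$, $Y=K_*(A\tensor B)$, and $Z=\mathrm{Tor}_1(K_*(A),K_*(B))$, with $\alpha$ of degree $0$ and $\beta$ of degree $1$ exactly as in the hypothesis. Setting $n=0$ yields the first displayed sequence, reading off $Y_0=K_0(A\tensor B)$ in the middle, the degree-$0$ tensor component on the left, and the degree-$1$ $\mathrm{Tor}$ component on the right; setting $n=1$ yields the second, with $K_1(A\tensor B)$ in the middle, the degree-$1$ tensor component on the left, and the degree-$0$ $\mathrm{Tor}$ component on the right.

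The only point demanding care — and the nearest thing to an obstacle — is the index bookkeeping induced by the degree shift: because $\beta$ has degree $1$, the $\mathrm{Tor}$ term appearing to the right of $K_0(A\tensor B)$ is the \emph{degree-$1$} component $\mathrm{Tor}(K_0(A),K_1(B))\oplus\mathrm{Tor}(K_1(A),K_0(B))$ rather than the degree-$0$ one, and symmetrically for $K_1(A\tensor B)$. Tracking this crossover correctly is precisely what produces the "mixing" of the $K_0$ and $K_1$ indices seen in the two stated sequences.
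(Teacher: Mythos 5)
Your proposal is correct and is exactly the argument the paper intends: the paper offers no separate proof, merely noting that since $\alpha$ has degree $0$ and $\beta$ has degree $1$ the graded KTP sequence can be ``rewritten'' as the two displayed sequences, which is precisely the degree-by-degree decomposition you carry out. Your version simply makes explicit the diagram chase and the index crossover that the paper leaves implicit.
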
 
	
On the other hand, we say that $A$ satisfies the K\"unneth formula if and only if

\begin{align*}
\alpha: K_{*}(A) \otimes K_{*}(B) \rightarrow K_{*}(A\otimes B)
\end{align*}

is an isomorphism for any C*-algebra $B$ with free abelian K-groups $K_{*}(B)$ \cite{Oyono}. This is equivalent to say that $A$ satisfies the K\"unneth formula if there is a short exact sequence
	
 \begin{align*}
    0 \rightarrow K_{*}(A) \otimes K_{*}(B) \xrightarrow\alpha K_{*}(A\otimes B) \xrightarrow\beta  \mathrm{Tor}_1 (K_{*}(A),K_*(B))  \rightarrow 0,
  \end{align*}

as already mentioned, we call the class of all C*-algebras that satisfy the K\"unneth formula for tensor product, the KTP class. Every C*-algebra which in the UCT class is KK-equivalent to a commutative C*-algebra, and Atiyah \cite{Atiyah} showed that every commutative C*-algebra is in the KTP class. Therefore, the KTP class contains the UCT class.  Indeed, there are some examples which satisfy the K\"unneth formula for tensor product but they are not in UCT class.
 It follows from the "going down functor" machinery of \cite{Oyono} that if $G$ is any group that satisfies the Baum-Connes conjecture with coefficients then $C_{r}^{*}(G)$ satisfies K\"unneth formula. On the other hand, if $G$ is an infinite hyperbolic property $(T)$ group then $C_{r}^{*}(G)$ does not satisfy the UCT \cite{Skandalis}.

  Since the bootstrap category $N$ is in the UCT class it is in the KTP class too, so the KTP class contains a great many of the common examples of C*-algebras; and also it is  closed under properties that are required in this paper such as tensor product, inductive limit and two out of three in a short exact sequence  \cite{Oyono}.

	We make a minor contribution towards the UCT conjecture, in the following proposition. The proposition also shows the existence of a  ring ideal that is \emph{not} obviously generated by an idempotent, in a certain Grothendieck ring.
				\begin{proposition} Let $A$ be a separable nuclear C*-algebra, let $B$ be in the subset of UCT class algebras with $K_*=\{e\}.$ Then $A\tensor B$ is separable nuclear, in the UCT class and has $K_*=\{e\}.$ \label{prop:ideal}\end{proposition}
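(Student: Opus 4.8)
The plan is to establish the three structural assertions --- separability, nuclearity, and UCT membership --- and then read off the triviality of the K-theory as a by-product. Separability is immediate, since the tensor product of two separable C*-algebras is separable. Nuclearity is likewise routine: $A$ is nuclear by hypothesis, and since $A$ is nuclear the tensor product $A \otimes B$ is in any case unambiguous (minimal and maximal coincide); granting that $B$, lying in the distinguished subset of UCT-class algebras under consideration, is itself nuclear, the product $A \otimes B$ is nuclear. The substance of the proposition therefore lies entirely in showing that $A \otimes B$ is in the UCT class.

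For the K-theory I would apply the KTP theorem (\cref{KTP}) directly. Since $B$ is in the UCT class it is in the KTP class, so the two Künneth sequences are available for the pair $(A,B)$. Feeding in $K_*(B) = \{e\}$, every tensor term $K_i(A) \otimes K_j(B)$ and every torsion term $\mathrm{Tor}(K_i(A),K_j(B))$ contains a trivial factor and hence vanishes, and exactness then forces $K_0(A \otimes B) = K_1(A \otimes B) = \{e\}$.

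The heart of the argument is UCT membership, and here one must be careful: the closure of the UCT class under tensor products quoted earlier does \emph{not} apply, because $A$ is not assumed to be in the UCT class --- only $B$ is. The idea is instead to show that $B$ is $KK$-equivalent to the zero algebra and then transport this through the tensor product. Applying the UCT of \cite{SchochetRosenberg} with $B$ in the first variable (legitimate, since $B$ is in the UCT class) and $B$ again in the second, the outer terms $\mathrm{Ext}(K_*(B),K_*(B))$ and $\mathrm{Hom}(K_*(B),K_*(B))$ both vanish because $K_*(B) = \{e\}$, so $KK_*(B,B) = 0$. In particular the class of the identity $1_B$ is zero in $KK(B,B)$, which is exactly the statement that $B$ is $KK$-equivalent to $0$.

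Finally I would tensor with $A$. Because $A$ is nuclear, tensoring with $A$ is a well-defined exact operation on the $KK$-category, and the external Kasparov product $\tau_A \colon KK(B,B) \to KK(A \otimes B, A \otimes B)$ is additive and carries $1_B$ to $1_{A\otimes B}$; hence $1_{A\otimes B} = \tau_A(1_B) = \tau_A(0) = 0$, so $A \otimes B$ is $KK$-equivalent to $0$, and thus to the commutative C*-algebra $C_0(\emptyset)$. As the UCT class is closed under $KK$-equivalence, $A \otimes B$ lies in the UCT class, completing the proof. I expect this last maneuver to be the main obstacle: one cannot invoke closure under tensor products, and must instead recognize that a UCT-class algebra with trivial K-theory is $KK$-trivial and that $KK$-triviality is inherited by $A \otimes B$ through the functoriality of the tensor product on the $KK$-category.
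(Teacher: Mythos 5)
Your proof is correct, but it reaches the UCT conclusion by a genuinely different and considerably lighter route than the paper. Where you argue that the UCT sequence for the pair $(B,B)$ forces $KK_*(B,B)=0$, hence $1_B=0$, hence $B$ is $KK$-contractible, and then push this through the external product $\tau_A$ to get $1_{A\otimes B}=\tau_A(1_B)=0$, the paper instead invokes heavy classification machinery: Kirchberg's theorem that every separable nuclear C*-algebra is $KK$-equivalent to a Kirchberg algebra, the classification of Kirchberg algebras to identify the resulting algebra with $O_2$, and then the Kirchberg--R\o rdam structure theorem \cite{IPZH} realizing $A\tensor K\tensor O_2$ as a crossed product $D\rtimes_\beta\mathbb{Z}$ with $D$ an $AH_0$-algebra, which places $A\tensor B$ in the bootstrap class. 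Your argument is more elementary and self-contained --- it needs only the UCT exact sequence itself and the functoriality of $\tau_A$ (legitimate here since $A$ is separable and nuclear, so the tensor product is unambiguous), and it makes transparent that a UCT-class algebra with trivial $K$-theory is $KK$-contractible and that $KK$-contractibility passes to $A\tensor B$. What the paper's heavier route buys is the explicit structural conclusion emphasized in its introduction: it exhibits $A\tensor B$ concretely as a crossed product of a bootstrap-class algebra by $\mathbb{Z}$, showing that algebras not obviously in the bootstrap class are in fact in it, whereas your argument only identifies $A\tensor B$ up to $KK$-equivalence with the zero algebra. You are also right to flag the nuclearity of $B$ as an implicit hypothesis; the paper's own proof uses it in exactly the same tacit way.
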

				\begin{proof}
				Recall that the UCT holds if just one algebra is in the UCT class and has finitely generated K-theory. Clearly, the algebra $B$   has  these properties.  Thus,
				  \begin{multline*}0 \rightarrow K_0(A) \otimes K_0(B) \oplus K_1(A) \otimes K_1(B) \rightarrow K_0(A \otimes B) \rightarrow\\ \mathrm{Tor}(K_0(A),K_1(B)) \oplus \mathrm{Tor}(K_1(A),K_0(B)) \rightarrow 0\end{multline*}
					and from this it follows that the $K$-theory of $A\otimes B$ is trivial since $B$ has trivial $K$-theory. Thus all the difficulty lies in showing that $A\tensor B$ has the UCT property, and for this it is sufficient to show $KK$-equivalence to an algebra in the UCT class.   Since $B$ is separable and nuclear, it is $KK$-equivalent to a purely infinite simple nuclear algebra, $B'.$ Since $B'$ is a Kirchberg algebra and has trivial K-theory, by Kirchberg's classification result, it is therefore isomorphic to $O_2.$ Stabilizing, we thus have that $A\tensor B$ is $KK$-equivalent to $A\tensor K \tensor O_2.$  Now we recall \cite{IPZH} the deep result that if $A$ is separable and nuclear, then  $A\tensor K \tensor O_2$ is isomorphic to the crossed product $D\rtimes_\beta Z$ where $D$ is an $AH_0$ class algebra and $\beta$ is an automorphism. Thus since $D$ is an inductive limit of finite direct sums of algebras of the form $C_0(X)\tensor M_n$ it is in the UCT class, and crossed products by $Z$ preserve the UCT class.
					Thus we are done.
\end{proof}

\section{Additive maps and cyclic sequences}
In this section we define two different additive maps on a collection of complex C*-algebras and we check their additivity at  the Grothendick ring level. We begin by defining an additive function on abelian groups.
\begin{definition} \label{functionS}
Let $S$ be a real valued function on abelian groups $G_{i}$. We say $S$ is additive if a short exact sequence
$$0\longrightarrow G_{1} \longrightarrow G_{2} \longrightarrow G_{3} \longrightarrow 0 $$
implies $S(G_{2} )=S(G_{1} )+S(G_{3} )$.
\end{definition}

\begin{lemma} \label{function}
Consider the cyclic exact sequence
\[
\begin{tikzcd}
G_{1}  \ar[r, "\varphi_{1}"]& G_{2}   \ar[r,"\varphi_{2}"] & G_{3} \ar[d,"\varphi_{3}"] \\
G_{6} \ar[u,"\varphi_{6}"] & G_{5}  \ar[l,"\varphi_{5}"] & G_{4} \ar[l,"\varphi_{4}"]
\end{tikzcd}
\]
and let $S$ be additive, then
\begin{align}
(S(G_{5})-S(G_{2}))=(S(G_{4})-S(G_{1}))+(S(G_{6})-S(G_{3})).
\end{align}
\begin{proof}
From the cyclic exact sequence we have the following short exact sequences

$$0\longrightarrow  \ker\varphi_{1} \longrightarrow G_{1} \longrightarrow \text{im } \varphi_{1} \longrightarrow 0 $$
	and
	$$0\longrightarrow  \ker\varphi_{2} \longrightarrow G_{2} \longrightarrow \text{im }\varphi_{2} \longrightarrow 0 $$
and so on. The property of $S$ on a short exact sequence implies
$$
S(G_{1})=S(\ker\varphi_{1})+S(\text{im }\varphi_{1})
$$
but $\text{im }\varphi_{1} =\ker\varphi_{2}$, therefore
$$S(G_{1})=S(\ker\varphi_{1})+S(\ker\varphi_{2}), S(G_{2})=S(\ker\varphi_{2})+S(\ker\varphi_{3}), ...$$
which implies that
$$
S(G_{1})-S(G_{2})+S(G_{3})-S(G_{4})+S(G_{5})-S(G_{6})=0.
$$
\end{proof}
then
\begin{align}
(S(G_{5})-S(G_{2}))=(S(G_{4})-S(G_{1}))+(S(G_{6})-S(G_{3})).
\end{align}
\end{lemma}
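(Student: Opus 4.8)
The plan is to break the hexagonal cyclic exact sequence into six ordinary short exact sequences, one attached to each vertex, and then feed each of them into the additivity hypothesis on $S$ supplied by \cref{functionS}. Reading the indices cyclically modulo $6$ (so that $\varphi_7 = \varphi_1$ and $G_7 = G_1$), the homomorphism $\varphi_i \colon G_i \to G_{i+1}$ produces, via the first isomorphism theorem, the short exact sequence
\[
0 \longrightarrow \ker\varphi_i \longrightarrow G_i \longrightarrow \operatorname{im}\varphi_i \longrightarrow 0 .
\]
Applying \cref{functionS} to each of these gives $S(G_i) = S(\ker\varphi_i) + S(\operatorname{im}\varphi_i)$ for every $i$.

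Next I would use the defining property of a cyclic exact sequence, namely $\operatorname{im}\varphi_i = \ker\varphi_{i+1}$, to replace each image term by the corresponding kernel term. Abbreviating $k_i := S(\ker\varphi_i)$, the six relations become $S(G_i) = k_i + k_{i+1}$. Forming the alternating sum of the $S(G_i)$ then telescopes completely:
\[
S(G_1) - S(G_2) + S(G_3) - S(G_4) + S(G_5) - S(G_6) = \sum_{i=1}^{6} (-1)^{i+1}\bigl(k_i + k_{i+1}\bigr) = 0,
\]
since in this sum each $k_i$ occurs exactly once with a plus sign and once with a minus sign, and hence cancels.

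The desired identity is then obtained by a purely formal rearrangement of this vanishing alternating sum: grouping the terms as $\bigl(S(G_5)-S(G_2)\bigr) - \bigl(S(G_4)-S(G_1)\bigr) - \bigl(S(G_6)-S(G_3)\bigr) = 0$ yields exactly $(S(G_5)-S(G_2)) = (S(G_4)-S(G_1)) + (S(G_6)-S(G_3))$. I do not expect a genuine obstacle here; the one point that needs attention is that the additivity of $S$ is postulated only for short exact sequences, so the argument must first manufacture honest short exact sequences out of the cyclic one before the hypothesis can legitimately be invoked. Once those six kernel--image sequences are in hand, the remainder is a formal telescoping cancellation.
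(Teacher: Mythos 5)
Your proposal is correct and follows essentially the same route as the paper's own proof: both split the hexagon into the six kernel--image short exact sequences, apply the additivity of $S$ to each, substitute $\operatorname{im}\varphi_i = \ker\varphi_{i+1}$, and let the alternating sum telescope to zero before rearranging. Your write-up is in fact slightly more careful than the paper's, since you make the cyclic indexing and the final cancellation explicit.
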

\begin{lemma} \label{Folding}
Consider the cyclic exact sequence
\[
\begin{tikzcd}
G_{1}  \ar[r, "\varphi_{1}"]& G_{2}   \ar[r,"\varphi_{2}"] & G_{3} \ar[d,"\varphi_{3}"] \\
G_{6} \ar[u,"\varphi_{6}"] & G_{5}  \ar[l,"\varphi_{5}"] & G_{4} \ar[l,"\varphi_{4}"]
\end{tikzcd}
\]
then the following sequence

%

  \begin{center}\begin{tikzpicture}[auto]
    \node (M1) at  (0,0) [left] {$\begin{smallmatrix}G_1\\\oplus\\G_4\end{smallmatrix}$};
    \node (M2) at (1,0) [] {$\begin{smallmatrix}G_2\\\oplus\\G_5\end{smallmatrix}$};
     \node (M3) at (2,0) [right] {$\begin{smallmatrix}G_3\\\oplus\\G_6\end{smallmatrix}$};
    \draw[->] (M1) to node [swap] {$\begin{smallmatrix}\varphi_1\\\oplus\\\varphi_4\end{smallmatrix}$} (M2);
     \draw[->] (M2) to node [swap] {$\begin{smallmatrix}\varphi_2\\\oplus\\\varphi_5\end{smallmatrix}$} (M3);
    \draw[->,bend right=135] (M3) to node [swap] {$f \circ \sigma$} (M1);
  \end{tikzpicture}\end{center}

   is also exact, where f is $(\varphi_3 \oplus \varphi_6)$, and $\sigma$ is the flip interchanging the direct summands.
\end{lemma}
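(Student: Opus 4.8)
The plan is to reduce everything to the elementary fact that exactness of a direct sum of homomorphisms is detected summand by summand. Concretely, for group homomorphisms $f_1, f_2$ one has $\ker(f_1 \oplus f_2) = \ker f_1 \oplus \ker f_2$ and $\mathrm{im}(f_1 \oplus f_2) = \mathrm{im}\,f_1 \oplus \mathrm{im}\,f_2$, where the direct sums on the right are understood as subgroups of the relevant ambient direct sum. Since the hypothesis supplies a cyclic six-term exact sequence, we have $\mathrm{im}\,\varphi_i = \ker\varphi_{i+1}$ for every $i$, with indices read modulo $6$. The whole argument then consists of checking exactness at each of the three nodes of the folded sequence and observing that each such check splits into two of these six original exactness identities.

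First I would pin down the three maps precisely. The first is $(x,y) \mapsto (\varphi_1 x, \varphi_4 y)$ and the second is $(u,v) \mapsto (\varphi_2 u, \varphi_5 v)$, both diagonal. The third map, $f \circ \sigma$, is the one in which the flip $\sigma$ matters: combining $f = \varphi_3 \oplus \varphi_6$ with the interchange $\sigma$ of the two summands produces the map $G_3 \oplus G_6 \to G_1 \oplus G_4$ given by $(a,b) \mapsto (\varphi_6 b, \varphi_3 a)$. The purpose of $\sigma$ is purely bookkeeping: it reroutes $\varphi_6\colon G_6 \to G_1$ into the first summand and $\varphi_3\colon G_3 \to G_4$ into the second, so that the source and target summands of the folded sequence line up correctly with the original cyclic sequence.

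Next I would verify the three exactness conditions in turn. At $G_2 \oplus G_5$, the image of the first map is $\mathrm{im}\,\varphi_1 \oplus \mathrm{im}\,\varphi_4$ and the kernel of the second is $\ker\varphi_2 \oplus \ker\varphi_5$; these coincide because $\mathrm{im}\,\varphi_1 = \ker\varphi_2$ and $\mathrm{im}\,\varphi_4 = \ker\varphi_5$ are exactly exactness of the original sequence at $G_2$ and at $G_5$. At $G_3 \oplus G_6$, the image of the second map is $\mathrm{im}\,\varphi_2 \oplus \mathrm{im}\,\varphi_5$, while the kernel of $f\circ\sigma$, read off from the explicit formula above, is $\ker\varphi_3 \oplus \ker\varphi_6$; these agree by exactness at $G_3$ and at $G_6$. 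Finally, at $G_1 \oplus G_4$ the image of $f\circ\sigma$ is $\mathrm{im}\,\varphi_6 \oplus \mathrm{im}\,\varphi_3$ and the kernel of the first map is $\ker\varphi_1 \oplus \ker\varphi_4$; these are equal by exactness at $G_1$ and at $G_4$.

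I do not expect any genuine obstacle: the only thing that could go wrong is a mismatch in the pairing of summands, and this is precisely what the flip $\sigma$ is designed to correct. The one point worth recording carefully is the componentwise description of the kernel and image of $f \circ \sigma$, since this is where the interchange enters and where a careless reading might pair $G_3$ with $G_1$ rather than with $G_4$. Once that formula is fixed, all three verifications are immediate applications of the six original exactness identities, and the folded sequence is exact.
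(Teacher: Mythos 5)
Your proof is correct. The paper in fact states this lemma without any proof, so there is nothing to compare against; your componentwise verification --- identifying $f\circ\sigma$ as $(a,b)\mapsto(\varphi_6 b,\varphi_3 a)$ and then reducing exactness at each of the three nodes of the folded sequence to the corresponding pair of exactness conditions of the original six-term cycle --- is exactly the argument the authors evidently had in mind, and it is complete.
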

\begin{corollary}
Let $\bar{S}(G_{n} \oplus G_{n+3}):=S(G_{n+3})-S(G_{n})$, then $\bar{S}$ is additive on the sequence in Lemma\eqref {Folding}, in the sense that
\begin{align*}
\bar{S}(G_{2} \oplus G_{5})=\bar{S}(G_{1} \oplus G_{4})+\bar{S}(G_{3} \oplus G_{6})
\end{align*}
\end{corollary}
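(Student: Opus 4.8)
The plan is to prove the corollary by unwinding the definition of $\bar S$ and recognizing that the resulting identity is nothing other than the conclusion of \cref{function}. Concretely, evaluating the definition $\bar S(G_n\oplus G_{n+3})=S(G_{n+3})-S(G_n)$ at $n=2,1,3$ turns the three terms appearing in the claim into $\bar S(G_2\oplus G_5)=S(G_5)-S(G_2)$, $\bar S(G_1\oplus G_4)=S(G_4)-S(G_1)$, and $\bar S(G_3\oplus G_6)=S(G_6)-S(G_3)$. With these substitutions the asserted equality becomes $S(G_5)-S(G_2)=(S(G_4)-S(G_1))+(S(G_6)-S(G_3))$, which is exactly the displayed conclusion of \cref{function}. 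Hence the corollary follows at once from that lemma, whose proof already used only the additivity of $S$ together with the six-term exactness of the cyclic diagram.

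In this reading the role of \cref{Folding} is to make the phrase ``additive on the sequence'' meaningful: writing $H_n:=G_n\oplus G_{n+3}$, it exhibits an honest three-term exact sequence $H_1\to H_2\to H_3$ (with the wrap-around map $f\circ\sigma$) on whose middle and outer terms $\bar S$ is being compared, so that the identity reads as additivity of $\bar S$ across that folded sequence. I would also record the elementary compatibility that, since $S$ is additive and each $H_n$ sits in the split short exact sequence $0\to G_n\to G_n\oplus G_{n+3}\to G_{n+3}\to 0$, one has $S(H_n)=S(G_n)+S(G_{n+3})$; in particular the value of $\bar S$ is insensitive to the flip $\sigma$ interchanging the summands, so $\bar S$ is well defined on the terms of the folded sequence.

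There is essentially no analytic obstacle here; the only thing to watch is the bookkeeping, and in particular the temptation to prove the statement \emph{internally} to the three-term folded cycle. That shortcut does not work: applying the additivity of $S$ to the kernel--image short exact sequences of the folded cycle computes the alternating sum $S(H_1)-S(H_2)+S(H_3)$ of the summed values $S(H_n)=S(G_n)+S(G_{n+3})$, and a quick diagram chase shows this equals $2\,S(\ker(\varphi_1\oplus\varphi_4))$ rather than $0$. The corollary instead concerns the \emph{difference} functional $\bar S$, and the identity $\bar S(G_2\oplus G_5)=\bar S(G_1\oplus G_4)+\bar S(G_3\oplus G_6)$ is equivalent to the vanishing of the full six-term alternating sum $S(G_1)-S(G_2)+S(G_3)-S(G_4)+S(G_5)-S(G_6)=0$. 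Thus the six-term exactness underlying \cref{function} is genuinely needed, and quoting that lemma after the index substitution is the cleanest route.
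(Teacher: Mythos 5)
Your proof is correct and is exactly the argument the paper intends (the paper omits the proof entirely): substituting the definition of $\bar{S}$ turns the claimed identity into the displayed conclusion of \cref{function}, which is where the six-term exactness is actually used. Your side remark that the three-term folded cycle alone only yields $S(H_1)-S(H_2)+S(H_3)=2\,S(\ker(\varphi_1\oplus\varphi_4))$, and hence does not suffice, is a correct and worthwhile clarification of why \cref{Folding} serves only to make the phrase ``additive on the sequence'' meaningful.
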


\section{F\o lner sequences and amenability}
Next we show that there are maps on the Grothendieck rings of C*-algebras which are additive but are not multiplicative in general. In order to define these kinds of maps we take advantage of F\o lner sequences. Throughout this section we suppose that the groups are discrete.

Let $G$ be a group, for $f: G \rightarrow \mathbb{C}$ and each $g \in G,$ we define a left translation action by $(g.f)(h)=f(g^{-1}h)$ for every $h \in G$.

\begin{definition}
A group $G$ is amenable if it admits a left-invariant mean: that is , there is a state $\mu$ on $\l^{\infty}(G)$ such that $\mu(g^{-1}.f)=\mu(f)$, for every $f \in \l^{\infty}(G)$ and $g \in G$.
\end{definition}
\begin{example}
All finite groups are amenable, the left-invariant mean for a finite group $G$ is defined by $\mu(f)=|G|^{-1} \Sigma_{g \in G} f(g)$, for every $f \in \l^{\infty}(G)$.
\end{example}

\begin{definition}
For a countable group $G$ a F\o lner sequence is a sequence $\{ F_{n} \}$ of non-empty finite subset of $G$ such that
\begin{align*}
\frac{|gF_{n} \Delta F_n|}{|F_{n}|} \rightarrow 0
\end{align*}
for every $g \in G$, where $\Delta$ is the symmetric difference operator, i.e., $A \Delta B$ is the union of both relative complements of $A$ and $B$, and $|A|$ is the cardinality of a set $A$.
\end{definition}
The following lemma is well-known.
\begin{lemma}\label{amen&fol}
A discrete group $G$ has a F\o lner sequence if and only if it is amenable.
\end{lemma}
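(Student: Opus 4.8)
The plan is to prove the two implications separately, with the forward direction (Følner $\Rightarrow$ amenable) being routine and the converse requiring the bulk of the work. For the forward direction, given a Følner sequence $\{F_n\}$ I would define states $\mu_n$ on $\l^\infty(G)$ by averaging, $\mu_n(f) = |F_n|^{-1}\sum_{h\in F_n} f(h)$, exactly as in the example of finite groups. Since the state space of $\l^\infty(G)$ is weak-* compact by Banach--Alaoglu, the sequence $\{\mu_n\}$ has a weak-* cluster point $\mu$, again a state. A short computation using $(g^{-1}.f)(h)=f(gh)$ gives $|\mu_n(g^{-1}.f) - \mu_n(f)| \le \|f\|_\infty\, |gF_n \Delta F_n|/|F_n|$, which tends to $0$ by the Følner condition; hence the cluster point $\mu$ satisfies $\mu(g^{-1}.f)=\mu(f)$ and is a left-invariant mean, so $G$ is amenable.

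For the converse I would first convert the invariant mean into finitely supported almost-invariant probability measures: for every finite $F\subseteq G$ and every $\epsilon>0$ the claim is that there is a finitely supported probability measure $\phi$ on $G$ with $\|g.\phi - \phi\|_1 < \epsilon$ for all $g\in F$. Consider the convex set $C\subseteq \bigoplus_F \l^1(G)$ consisting of the vectors $(g.\phi-\phi)_{g\in F}$, where $\phi$ ranges over the finitely supported probability measures on $G$; the goal is to show $0$ lies in its norm closure. Since for a convex set the norm and weak closures coincide (Mazur's theorem), it suffices to place $0$ in the weak closure, and if this failed a Hahn--Banach separation would produce $(\psi_g)_{g\in F}$ in $\bigoplus_F \l^\infty(G)$ with $\inf_\phi \sum_{g\in F}\langle \psi_g, g.\phi-\phi\rangle>0$. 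Approximating the mean $\mu$ in the weak-* topology by such $\phi$ and using the duality identity $\langle \psi_g, g.\phi-\phi\rangle = \langle g^{-1}.\psi_g - \psi_g, \phi\rangle$ together with the invariance $\mu(g^{-1}.\psi_g)=\mu(\psi_g)$ would force this limit to be $0$, a contradiction. This Day--Namioka passage from weak to norm approximation is the step I expect to be the main obstacle.

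Finally I would pass from almost-invariant functions to almost-invariant sets by a layer-cake argument and then extract the sequence. Writing $\phi$ as the integral of the indicators $\mathbf{1}_{\{\phi>t\}}$ and setting $F_t = \{h : \phi(h) > t\}$, one has the two identities $\sum_h |\phi(g^{-1}h)-\phi(h)| = \int_0^\infty |gF_t \Delta F_t|\,dt$ and $\sum_h \phi(h) = \int_0^\infty |F_t|\,dt = 1$. Summing the first over $g\in F$ and using $\sum_{g\in F}\|g.\phi-\phi\|_1 < \epsilon$, an averaging argument yields a single level $t$ with $\sum_{g\in F}|gF_t\Delta F_t| < \epsilon|F_t|$ and $F_t$ nonempty, so this $F_t$ is the desired almost-invariant finite set. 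Since $G$ is countable, I would enumerate $G=\{g_1,g_2,\dots\}$ and apply the previous two steps with $F=\{g_1,\dots,g_n\}$ and $\epsilon=1/n$ to obtain nonempty finite sets $F_n$ with $|g_iF_n\Delta F_n|/|F_n| < 1/n$ for all $i\le n$; for each fixed $g=g_i$ this gives $|gF_n\Delta F_n|/|F_n|\to 0$, so $\{F_n\}$ is a Følner sequence.
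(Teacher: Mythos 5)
Your proposal is correct, and it is the canonical proof of F\o lner's theorem: the easy direction by averaging over the F\o lner sets and taking a weak-* cluster point of the resulting states, and the converse via Day's convexity argument (Mazur plus Hahn--Banach separation to upgrade the weak-* invariant mean to norm-almost-invariant finitely supported probability measures) followed by Namioka's layer-cake trick and a diagonal extraction. The paper itself offers no proof --- it states the lemma as ``well-known'' --- so there is no argument to compare yours against; your write-up supplies exactly the standard one. Two small points worth a sentence each if you flesh this out: in the separation step you invoke without comment the fact that the finitely supported probability measures are weak-* dense in the set of means on $\ell^\infty(G)$, which is what lets you approximate $\mu$ and derive the contradiction; and your final diagonalization uses countability of $G$, which is consistent with the paper's Definition of a F\o lner sequence (stated only for countable groups) even though the lemma says ``discrete group'' --- for uncountable discrete groups one would need F\o lner nets indexed by finite subsets instead of a sequence.
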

Certainly the group $\Z$ is amenable.
For example, we could choose the F\o lner sequence $\mathscr{F}_n=\{-n,\cdots ,n\}.$
Since every countable abelian group is the direct limit of finitely generated abelian groups, and these  are amenable, every countable abelian group is amenable.

\begin{definition}\label{linear map}
Let $A$ be a C*-algebra whose $K$-groups are finitely generated abelian groups.  Let $\mathscr{F}_n$ be a F\o lner sequence of the group $\mathbb{Z}$. Define the map $\mathrm{R}(A)$ on the set of these C*-algebras as follows
\begin{align*}
\mathrm{R} (A):= \lim_{n \rightarrow \infty} \frac{\ln(|G_{n}^{A}|) -\ln(|H_{n}^{A}|) }{\ln(|\mathscr{F}_n|) },
\end{align*}
where $G_{n}^{A}=K_0(A)\otimes \mathscr{F}_n$ and $H_{n}^{A}=K_1(A)\otimes \mathscr{F}_n$.
\end{definition}
As we will show next, the above map can be described in simpler terms. However, the above definition generalizes easily to the countably generated case, whereas the simplified description does not.
The rank of a finitely generated abelian group is the minimal number of copies of $\mathbb{Z}$ in the free part of the group. We find that $R(A)$ is given in terms of the ranks of the $K$-theory groups.\par
First a lemma:
\begin{lemma}
If $K$ is a finitely generated abelian group, then
\[ \lim_{n\to\infty} \frac{\ln |K\otimes \mathscr{F}_n|}{\ln |\mathscr{F}_n|}=M \]
where $M$ is the rank of $K$, i.e., the number of generators of the free part of $K$.
\label{lem:cardinality.of.F.1}
\end{lemma}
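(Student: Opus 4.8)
The plan is to compute the limit by decomposing the finitely generated abelian group $K$ into its free and torsion parts and analyzing each contribution to $\ln|K\otimes\mathscr{F}_n|$ separately. By the structure theorem for finitely generated abelian groups, I would write $K\isom \Z^M\oplus T$, where $M$ is the rank and $T$ is a finite torsion group. Since tensor product distributes over direct sums, $K\otimes\mathscr{F}_n \isom (\Z^M\otimes\mathscr{F}_n)\oplus(T\otimes\mathscr{F}_n)$, so that $|K\otimes\mathscr{F}_n| = |\Z^M\otimes\mathscr{F}_n|\cdot|T\otimes\mathscr{F}_n|$ and the logarithm splits as a sum.

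The first step is to pin down what $\mathscr{F}_n$ is as an abelian group for the purposes of the tensor product. Here I would use the concrete F\o lner sequence $\mathscr{F}_n=\{-n,\dots,n\}$ for $\Z$ introduced just before the statement; the tensor product $K\otimes\mathscr{F}_n$ must be interpreted as tensoring with the free abelian group $\Z^{|\mathscr{F}_n|}$ (or with $\Z^{2n+1}$), consistently with \cref{linear map}. Granting this, $\Z^M\otimes\mathscr{F}_n\isom\Z^{M|\mathscr{F}_n|}$ is free of rank $M|\mathscr{F}_n|$, which is infinite, so the cardinality reading cannot be literal; the intended meaning is the cardinality of a torsion group, and I would treat $\ln|K\otimes\mathscr{F}_n|$ as counting only the finite part, or reinterpret $\mathscr{F}_n$ so that the construction yields finite groups. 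The cleanest reading, matching \cref{linear map}, is that one tensors with a fixed finite cyclic group or that the free part contributes the dominant term $M\ln|\mathscr{F}_n|$ while the torsion part contributes a bounded term.

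Concretely, the free contribution gives $\ln|\Z^M\otimes\mathscr{F}_n| = M\ln|\mathscr{F}_n|$ (in whatever consistent finite sense the paper fixes), so that $\frac{\ln|\Z^M\otimes\mathscr{F}_n|}{\ln|\mathscr{F}_n|}=M$ exactly for every $n$. For the torsion part, $T\otimes\mathscr{F}_n$ is a finite group whose order is bounded by a constant depending only on $|T|$ and the torsion of $\mathscr{F}_n$; since $\mathscr{F}_n$ is eventually a free abelian group, $T\otimes\mathscr{F}_n$ is a fixed finite group (a direct sum of copies of $T$), giving $\ln|T\otimes\mathscr{F}_n|=O(1)$ at worst, or more precisely $|\mathscr{F}_n|$ copies of a group whose contribution still grows only linearly and must be shown to be negligible relative to $\ln|\mathscr{F}_n|$ after division. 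Dividing by $\ln|\mathscr{F}_n|$ and taking $n\to\infty$, the free part yields $M$ and the torsion part vanishes in the limit, giving the claimed value $M$.

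The main obstacle I anticipate is not the algebra but the correct interpretation of the symbols: making the cardinalities $|K\otimes\mathscr{F}_n|$ genuinely finite and reconciling the growth rate so that the free rank emerges as $M$ while torsion becomes negligible. Once the convention for $\mathscr{F}_n$ (finite cyclic versus free, and how $\ln|\cdot|$ is read on the resulting group) is fixed to agree with \cref{linear map}, the decomposition $K=\Z^M\oplus T$ makes the limit computation routine, with the only quantitative estimate being that $\ln|T\otimes\mathscr{F}_n|/\ln|\mathscr{F}_n|\to 0$.
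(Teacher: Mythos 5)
Your overall strategy --- decompose $K\cong \mathbb{Z}^M\oplus T$ by the structure theorem, argue that the free part contributes $M\ln|\mathscr{F}_n|$ while the torsion part is asymptotically negligible --- is exactly the paper's, and with the right reading of the notation the computation is the one-line estimate you describe. The gap is that you never commit to that reading, and the interpretations you entertain along the way actually break the statement. If $K\otimes\mathscr{F}_n$ meant tensoring with the free abelian group $\mathbb{Z}^{|\mathscr{F}_n|}$, the free part would be an infinite group (so $\ln|\cdot|$ is meaningless) and, worse, the torsion part would be $T^{2n+1}$, so $\ln\bigl|T\otimes\mathbb{Z}^{2n+1}\bigr|=(2n+1)\ln|T|$ grows linearly in $n$ and the quotient by $\ln|\mathscr{F}_n|=\ln(2n+1)$ diverges whenever $T\neq 0$. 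Your own text flags exactly this tension (``must be shown to be negligible relative to $\ln|\mathscr{F}_n|$'') and then asserts the torsion part vanishes without resolving it; under that interpretation it does not vanish, and the lemma would be false.

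The intended meaning, which the paper's proof states, is that $K\otimes\mathscr{F}_n$ is the finite \emph{subset} of $K$ obtained by restricting the free coordinates to the F\o lner set $\{-n,\dots,n\}$: as a set it equals $[-n,n]^M\oplus T$, of cardinality $(2n+1)^M\,|T|$. With that fixed, $\ln|K\otimes\mathscr{F}_n|=M\ln(2n+1)+\ln|T|$, the torsion contribution is a genuine constant rather than something merely claimed to be $o(\ln|\mathscr{F}_n|)$, and the limit is $M$ immediately. So the missing ingredient is not the asymptotic analysis but the definition of the symbol: the proof cannot be completed until $K\otimes\mathscr{F}_n$ is pinned down as this finite set, and once it is, everything you wrote about the free/torsion split goes through.
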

\begin{proof}
By the structure theorem for finitely generated abelian groups, $K=\Z^M \oplus T,$ where $T$ is a finitely generated torsion group, and thus has finitely many elements.
For sufficiently large $n$, we notice that $K\otimes \mathscr{F}_n$ will equal, as a set, $[-n,n]^M \oplus T,$ and so the cardinality  $|K\otimes \mathscr{F}_n|$ will be a multiple of $(2n+1)^M.$
But from this it follows that \( \lim_{n\to\infty} \frac{\ln |K\otimes \mathscr{F}_n|}{ \ln |\mathscr{F}_n|}=M, \) as claimed.
\end{proof}
From this lemma it follows that:
\begin{proposition} If $K_0 (A) =\mathbb{Z}^n \oplus \mathbb{Z}_{q_1} \oplus \cdots \oplus \mathbb{Z}_{q_t}$ and
                                      $K_1 (A) =\mathbb{Z}^m \oplus \mathbb{Z}_{q'_1} \oplus \cdots \oplus \mathbb{Z}_{q'_{t'}}$
then $R(A)=n-m.$
\end{proposition}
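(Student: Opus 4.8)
The plan is to apply \cref{lem:cardinality.of.F.1} separately to $K_0(A)$ and $K_1(A)$ and then combine the two limits. By hypothesis, $K_0(A)=\mathbb{Z}^n \oplus \mathbb{Z}_{q_1}\oplus\cdots\oplus\mathbb{Z}_{q_t}$ is a finitely generated abelian group whose free part has rank $n$, and similarly $K_1(A)$ has free part of rank $m$. So the lemma immediately gives
\[
\lim_{n\to\infty}\frac{\ln|K_0(A)\otimes\mathscr{F}_n|}{\ln|\mathscr{F}_n|}=n
\qquad\text{and}\qquad
\lim_{n\to\infty}\frac{\ln|K_1(A)\otimes\mathscr{F}_n|}{\ln|\mathscr{F}_n|}=m,
\]
where I am using the notation $G_n^A=K_0(A)\otimes\mathscr{F}_n$ and $H_n^A=K_1(A)\otimes\mathscr{F}_n$ from \cref{linear map}.

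Next I would recall the definition of $R(A)$ and write it as a difference of the two quotients:
\[
R(A)=\lim_{n\to\infty}\frac{\ln|G_n^A|-\ln|H_n^A|}{\ln|\mathscr{F}_n|}
=\lim_{n\to\infty}\frac{\ln|G_n^A|}{\ln|\mathscr{F}_n|}-\lim_{n\to\infty}\frac{\ln|H_n^A|}{\ln|\mathscr{F}_n|}.
\]
The splitting of the limit of a difference into a difference of limits is justified precisely because both individual limits exist and are finite, which is exactly what \cref{lem:cardinality.of.F.1} guarantees. Substituting the two values computed above yields $R(A)=n-m$, as claimed.

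The only point requiring a little care — and the closest thing to an obstacle — is the legitimacy of separating the limit of the difference into the difference of the limits; this is valid here only because each summand converges on its own, so I would state explicitly that \cref{lem:cardinality.of.F.1} supplies the existence of both limits before performing the subtraction. Everything else is a direct application of the lemma, so the proof is short.
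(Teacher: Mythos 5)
Your argument is correct and is exactly the one the paper intends: the proposition is stated there as an immediate consequence of \cref{lem:cardinality.of.F.1}, applied once to $K_0(A)$ and once to $K_1(A)$, with the limit of the difference split into a difference of limits. Your explicit remark that the splitting is licensed by the existence of both individual limits is a welcome (if minor) addition of care; the only cosmetic issue is the clash between $n$ as the rank of $K_0(A)$ and $n$ as the index of the F\o lner sequence, which you should rename.
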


Now we claim that the map $\mathrm{R}$ is additive on the Grothendick rings with respect to the operation $+$. In order to show our claim, we consider a short exact sequence of C*-algebras
\begin{align*}
0 \rightarrow A \rightarrow C \rightarrow B\rightarrow 0,
\end{align*}
and  we will show that
$\mathrm{R} (C)=\mathrm{R} (A) + \mathrm{R} (B)$.
\section*{Additive and multiplicative maps: the rank characters}
In this section we further develop the characters introduced in the last section. These will be called the rank characters and we show that on their domain of definition they are not only additive but also multiplicative.
This will first of all require assuming finitely generated K-theory, but then we will show how to modify  this condition.
\begin{definition}
Consider a collection of complex C*-algebras. Define a map $\chi_{1}(A):=m-n$ where $m$ is the number of free generators of the finitely generated abelian group $K_1(A)$ and $n$ is the number of free generators of the finitely generated abelian group $K_0(A)$, for every  C*-algebra $A$ in the collection.
\end{definition}

Consider the following well-known lemma, which we will utilize to demonstrate the additivity of the map $\chi_{1}$ defined earlier.

\begin{lemma} \label{split}
Consider a short exact sequence of abelian groups
$$0\longrightarrow G_{1} \longrightarrow G_{2} \longrightarrow G_{3} \longrightarrow 0, $$
 tensoring by $\mathbb{Q}$ we get the short exact sequence of $\mathbb{Q}$-modules
$$0\longrightarrow G_{1} \otimes \mathbb{Q}  \longrightarrow G_{2}\otimes \mathbb{Q}  \longrightarrow G_{3}\otimes \mathbb{Q} \longrightarrow 0 $$
which splits, in other words
$$
G_{2}\otimes \mathbb{Q}=(G_{1} \otimes \mathbb{Q}) \oplus (G_{3}\otimes \mathbb{Q}).
$$
\end{lemma}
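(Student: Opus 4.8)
The plan is to prove the two standard facts about tensoring an abelian group short exact sequence with $\mathbb{Q}$: first that exactness is preserved, and second that the resulting sequence of $\mathbb{Q}$-vector spaces splits. The key observation is that $\mathbb{Q}$ is a flat $\mathbb{Z}$-module, so tensoring with $\mathbb{Q}$ is an exact functor and carries the short exact sequence
$$0\longrightarrow G_{1} \longrightarrow G_{2} \longrightarrow G_{3} \longrightarrow 0$$
to the short exact sequence
$$0\longrightarrow G_{1}\otimes\mathbb{Q} \longrightarrow G_{2}\otimes\mathbb{Q} \longrightarrow G_{3}\otimes\mathbb{Q} \longrightarrow 0.$$
The cleanest way to see flatness of $\mathbb{Q}$ is to note that $\mathbb{Q}$ is the localization of $\mathbb{Z}$ at its nonzero elements, and localization is always exact; alternatively one can cite that $\mathbb{Q}$ is torsion-free and every torsion-free abelian group is flat, or simply that $\mathrm{Tor}_1^{\mathbb{Z}}(\mathbb{Q},G_3)=0$ because $\mathbb{Q}$ is divisible and torsion-free. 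Any of these gives exactness immediately.

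For the splitting, the essential point is that $G_i\otimes\mathbb{Q}$ is not merely an abelian group but a $\mathbb{Q}$-vector space, and every short exact sequence of vector spaces over a field splits. Concretely, I would observe that the surjection $G_2\otimes\mathbb{Q}\to G_3\otimes\mathbb{Q}$ admits a $\mathbb{Q}$-linear section: choose a $\mathbb{Q}$-basis of $G_3\otimes\mathbb{Q}$, lift each basis vector arbitrarily to a preimage in $G_2\otimes\mathbb{Q}$, and extend $\mathbb{Q}$-linearly. This section is well-defined precisely because we are working over a field, where bases exist and linear maps are determined freely on a basis. The existence of the section then yields the direct sum decomposition
$$G_{2}\otimes\mathbb{Q}=(G_{1}\otimes\mathbb{Q})\oplus(G_{3}\otimes\mathbb{Q})$$
by the standard splitting lemma for the category of $\mathbb{Q}$-modules.

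There is really no hard part here; the statement is a folklore lemma and the two ingredients (flatness of $\mathbb{Q}$ and splitting of vector space sequences) are elementary. The only point requiring a moment's care is to emphasize \emph{why} the sequence splits when a general sequence of abelian groups need not: the reason is the upgrade from $\mathbb{Z}$-modules to $\mathbb{Q}$-modules effected by the tensor product, which turns every object into a vector space over the field $\mathbb{Q}$. I would therefore keep the proof short, invoking exactness of localization for the first half and the splitting lemma for vector spaces for the second, and explicitly flag that the field hypothesis is what makes the section exist. No finite-generation hypothesis is needed anywhere, so the lemma applies to arbitrary abelian groups $G_i$, which is worth noting since it will later be used on K-theory groups that are finitely generated but whose finite generation plays no role in this particular step.
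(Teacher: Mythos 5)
Your proof is correct. The paper actually states this lemma without any proof at all, introducing it only as a ``well-known lemma,'' so there is no argument in the paper to compare against; the one you supply --- exactness of $-\otimes_{\mathbb{Z}}\mathbb{Q}$ via flatness (equivalently, exactness of localization, or vanishing of $\mathrm{Tor}_1^{\mathbb{Z}}(\mathbb{Q},-)$), followed by the splitting of any short exact sequence of $\mathbb{Q}$-vector spaces by lifting a basis --- is exactly the standard argument the authors are implicitly invoking, and it is complete as written.
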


\begin{corollary} \label{additive-chi1}
Consider a collection of complex C*-algebras with finitely generated K-theory, then $\chi_{1}$ as defined above is additive.
\end{corollary}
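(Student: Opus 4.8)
The plan is to reduce the additivity of $\chi_1$ to the elementary behavior of ranks under short exact sequences, exploiting the fact that for complex C*-algebras there is a six-term cyclic exact sequence in $K$-theory. First I would take a short exact sequence of C*-algebras
\[0 \rightarrow A \rightarrow C \rightarrow B \rightarrow 0\]
and recall that it induces the standard cyclic six-term exact sequence in $K$-theory, namely
\[
\begin{tikzcd}
K_0(A) \ar[r] & K_0(C) \ar[r] & K_0(B) \ar[d] \\
K_1(B) \ar[u] & K_1(C) \ar[l] & K_1(A) \ar[l]
\end{tikzcd}
\]
This is precisely a cyclic exact sequence of the form treated in \cref{function}, with the six groups being the $K$-theory groups of $A$, $C$, $B$ arranged around the hexagon.

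Next I would choose the additive function $S$ to be the rank of a finitely generated abelian group, that is, $S(G) = \dim_{\mathbb{Q}}(G \otimes \mathbb{Q})$. To see that $S$ satisfies \cref{functionS}, I would invoke \cref{split}: tensoring a short exact sequence of abelian groups with $\mathbb{Q}$ yields a split short exact sequence of $\mathbb{Q}$-vector spaces, whence $S(G_2) = S(G_1) + S(G_3)$, so rank is additive in the required sense. Since we are working with finitely generated K-theory, all the groups involved are finitely generated abelian groups and $S$ takes finite values.

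Applying \cref{function} to our six-term sequence with this choice of $S$ then gives the identity
\[
\bigl(S(K_1(C)) - S(K_0(C))\bigr) = \bigl(S(K_1(A)) - S(K_0(A))\bigr) + \bigl(S(K_1(B)) - S(K_0(B))\bigr),
\]
after matching up the groups $G_1,\dots,G_6$ of the lemma with the entries of the hexagon in the correct cyclic order. Since $S$ of a finitely generated abelian group is exactly its number of free generators, $S(K_1(A)) - S(K_0(A)) = m - n = \chi_1(A)$, and likewise for $B$ and $C$. This yields $\chi_1(C) = \chi_1(A) + \chi_1(B)$, which is the desired additivity.

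The only genuinely delicate point is bookkeeping: one must verify that the six-term cyclic sequence, read off in the cyclic order used in \cref{function}, places the groups so that the alternating sum $S(G_1)-S(G_2)+S(G_3)-S(G_4)+S(G_5)-S(G_6)=0$ produced by the lemma regroups into $\chi_1(C)=\chi_1(A)+\chi_1(B)$ with the correct signs, rather than some permutation of the terms. I expect this index-matching to be the main (though routine) obstacle; everything else follows immediately from \cref{split}, \cref{function}, and the interpretation of rank as the number of free generators.
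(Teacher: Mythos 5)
Your proposal is correct and follows essentially the same route as the paper: both arguments pass to the six-term cyclic exact sequence in $K$-theory, use \cref{split} to see that rank (i.e.\ $\mathbb{Q}$-dimension after tensoring with $\mathbb{Q}$) is additive on short exact sequences, and then invoke \cref{function} to extract the alternating-sum identity that regroups into $\chi_1(C)=\chi_1(A)+\chi_1(B)$. The only cosmetic difference is that the paper tensors the whole hexagon by $\mathbb{Q}$ before applying \cref{function}, whereas you apply \cref{function} directly to the integral hexagon with $S=\operatorname{rank}$; the content is identical.
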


\begin{proof}
 We recall that the additivity means that if
\begin{align} \label{ses-1}
0 \longrightarrow A \longrightarrow C \longrightarrow B \longrightarrow 0
\end{align}
  is a short exact sequence of complex C*-algebras, then $\chi_{1}(C)=\chi_{1}(A)+\chi_{1}(B)$. Now, focusing on this sequence, we observe that it gives rise to a cyclic exact sequence

\[
\begin{tikzcd}
K_{0}(A)  \ar[r, "\varphi_{1}"]& K_{0}(C)   \ar[r,"\varphi_{2}"] & K_{0}(B) \ar[d,"\varphi_{3}"] \\
K_{1}(B) \ar[u,"\varphi_{0}"] & K_{1}(C)  \ar[l,"\varphi_{5}"] & K_{1}(A)  \ar[l,"\varphi_{4}"]
\end{tikzcd}
\]

tensor this cyclic exact sequence by $\mathbb{Q}$ we get the cyclic exact sequence

\[
\begin{tikzcd}
K_{0}(A) \otimes \mathbb{Q}  \ar[r, "\varphi_{1} \otimes id"]& K_{0}(C)  \otimes \mathbb{Q}  \ar[r,"\varphi_{2} \otimes id"] & K_{0}(B) \otimes \mathbb{Q}  \ar[d,"\varphi_{3} \otimes id"] \\
K_{1}(B) \otimes \mathbb{Q}  \ar[u,"\varphi_{0}\otimes id"] & K_{1}(C) \otimes \mathbb{Q}  \ar[l,"\varphi_{5}\otimes id"] & K_{1}(A) \otimes \mathbb{Q}  \ar[l,"\varphi_{4}\otimes id"]
\end{tikzcd}
\]

Considering the cyclic exact sequence, we can obtain some related short exact sequences. These sequences can be written in the form:
$$0\longrightarrow  \ker(\varphi_{1} \otimes id) \longrightarrow K_{0}(A) \otimes \mathbb{Q}  \longrightarrow \text{im }(\varphi_{1}\otimes id) \longrightarrow 0 $$
	and
	$$0\longrightarrow  \ker(\varphi_{2}\otimes id) \longrightarrow K_{0}(C) \otimes \mathbb{Q}    \longrightarrow \text{im }\varphi_{2} \otimes id)\longrightarrow 0 $$
and so on, now apply Lemma \eqref{split}, each of these sequences split and then Lemma \eqref{function} shows that $$\chi_{1}(C)=\chi_{1}(B)+\chi_{1}(A).$$
\end{proof}

Therefore we showed that there exist linear maps on the Grothendick rings of C*-algebras, however, these maps are not multiplicative in general which means that the map $\mathrm{R} $ is not a character for a Grothendick ring of C*-algebras in general. Indeed, for checking the multiplicativity, we need to apply the KTP sequence which implies that the map $\alpha :K_{*}(A) \otimes K_{*}(B) \rightarrow K_{*}(A\otimes B) $ is not isomorphism, in general.
We can also define at least a linear map using the $p$-localization functor that picks out the $p$-power part of an abelian group.
This map is defined by $\chi_p (A):= n-m$ where $n$ is the base-$p$ logarithm of the cardinality of $K_0(A)\otimes \mathbb{Z}_p$ and $m$ is the base-$p$ logarithm of the cardinality of $K_1(A)\otimes \mathbb{Z}_p.$
Then, as localization is an exact functor we have a cyclic exact sequence of $p$-power groups
$$\begin{array}{rcccccl}
K_0(B)_p&\longrightarrow&K_0(C)_p&\longrightarrow&K_0(A)_p\\
\uparrow && &&\downarrow\\
K_1(A)_p&\longleftarrow&K_1(C)_p&\longleftarrow&K_1(B)_p\\\end{array}
$$		
and considering kernels and co-kernels shows that $\chi_p(C)=\chi_p(B)+\chi_p(A),$ as expected. However,  this map does not appear to be multiplicative in general.

In the above constructions, we assumed finitely generated $K$-theory. There exists however a simple example of a C*-algebra that does not have finitely generated $K$-theory:

The so-called universal complex UHF-algebra $U$ is a complex UHF-algebra for which $K_{0}(U)=\mathbb{Q}$ and $K_{1}(U)=0$. In fact, from the known classification theorem (see \cref{glimm}) for UHF algebras, it follows that this nuclear C*-algebra is a tensor idempotent. It therefore seems desirable to extend the above theory to the case of countably generated abelian groups. We are going to introduce a ``renormalized'' rank function that is nontrivial exactly when the usual rank is infinite.
Recall from \cite{TerenceTao} that:
\begin{lemma} For the free abelian group $\mathbb{Z}^\infty$ on countably many generators,
the sets $\mathcal G_n := \{-N_n,\cdots,N_n\}^n$ will form a F\o lner sequence if the increasing sequence $N_n$ grows sufficiently rapidly in $n.$ \end{lemma}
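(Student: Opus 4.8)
The plan is to verify the F\o lner condition directly on a generating set and then bootstrap to all group elements by a standard triangle-inequality argument. First I would fix notation: identify $\mathbb{Z}^\infty$ with the group $\bigoplus_{i\ge 1}\mathbb{Z}$ of finitely supported integer sequences, let $e_j$ denote the $j$-th standard generator, and regard each box $\mathcal{G}_n = \{-N_n,\dots,N_n\}^n$ as the subset of $\mathbb{Z}^\infty$ consisting of those sequences whose first $n$ coordinates lie in $\{-N_n,\dots,N_n\}$ and whose remaining coordinates vanish, so that $|\mathcal{G}_n| = (2N_n+1)^n$. Since the $e_j$ generate the group, and since for a product $g = \sum_j m_j e_j$ one has $|(g+\mathcal{G}_n)\,\Delta\,\mathcal{G}_n| \le \sum_i |(\pm e_{j_i}+\mathcal{G}_n)\,\Delta\,\mathcal{G}_n|$ by telescoping along the word and using that each $|(t+F)\,\Delta\,(t'+F)|$ depends only on $t-t'$ (invariance of counting measure under translation, together with $|(-e_j+F)\,\Delta\,F| = |(e_j+F)\,\Delta\,F|$), it suffices to prove $\tfrac{|(e_j+\mathcal{G}_n)\,\Delta\,\mathcal{G}_n|}{|\mathcal{G}_n|}\to 0$ for every fixed $j$.

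Next I would carry out the single-coordinate computation. Fix $j$ and take $n\ge j$. The translate $e_j+\mathcal{G}_n$ agrees with $\mathcal{G}_n$ in every coordinate except the $j$-th, where the interval $\{-N_n,\dots,N_n\}$ is shifted to $\{-N_n+1,\dots,N_n+1\}$; hence the symmetric difference consists exactly of the sequences whose $j$-th coordinate equals $-N_n$ or $N_n+1$, giving $|(e_j+\mathcal{G}_n)\,\Delta\,\mathcal{G}_n| = 2(2N_n+1)^{n-1}$ and therefore a ratio of $2/(2N_n+1)$. Because the sequence $N_n$ is increasing, hence unbounded, this ratio tends to $0$, which establishes the F\o lner condition on each generator and, via the reduction above, on all of $\mathbb{Z}^\infty$.

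The one point that requires care — and the reason a growth hypothesis is mentioned at all — is the interaction between the growing dimension $n$ of the box and a fixed generator $e_j$: when $n<j$ the translate $e_j+\mathcal{G}_n$ is \emph{disjoint} from $\mathcal{G}_n$ (the $j$-th coordinate is $1$ on one side and $0$ on the other), so there the ratio equals $2$ rather than being small. I expect this to be the main obstacle to a naive estimate, but it is harmless: for each fixed $j$ this anomaly occurs only for the finitely many indices $n<j$ and so does not affect the limit as $n\to\infty$. I would also emphasize that the computation already shows $N_n\to\infty$ suffices for the F\o lner property, so any increasing choice of $N_n$ works; the rapid-growth clause becomes genuinely relevant only in the subsequent application, where one must control $\ln|\mathcal{G}_n|$ against $\ln|\mathscr{F}_n|$ in the renormalized rank.
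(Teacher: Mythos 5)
Your proof is correct. The paper does not actually supply an argument for this lemma---it is quoted from a MathOverflow answer of Tao \cite{TerenceTao}---so there is no in-paper proof to compare against; your verification (reduce to the generators $e_j$ by telescoping along a word and using translation invariance of the counting measure, then compute $|(e_j+\mathcal G_n)\,\Delta\,\mathcal G_n|=2(2N_n+1)^{n-1}$ for $n\ge j$, giving a ratio $2/(2N_n+1)\to 0$) is the standard and complete one, and your handling of the $n<j$ anomaly, where the translate is disjoint from the box and the ratio equals $2$, is exactly the right point to isolate: for each fixed group element this occurs for only finitely many $n$ and so does not affect the limit. Your closing remark is also accurate and worth keeping: the F\o lner property itself needs only $N_n\to\infty$, and the ``sufficiently rapidly'' clause in the statement earns its keep not here but in the subsequent use of $\mathcal G_n$ to define the renormalized rank $q$ (e.g.\ in \cref{lemma:TorsionDoesntMatter} and \cref{th:countably.generated.case}), where one must control cardinalities of images of the boxes in quotients against $\ln|\mathcal G_n|$.
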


In other words, the countable free abelian group $\mathbb{Z}^\infty,$ an infinite direct sum, is amenable. Amenability passes to quotients, so then any countable abelian group is amenable.
\begin{remark}
Given a countably generated abelian group, $G,$ the fact that any such group is a quotient of a countable free abelian group means we always have a short exact sequence:
 $$ 0\to K \to  \mathbb{Z}^\infty \to G \to 0.$$
 Since the middle term $\mathbb{Z}^\infty$ is amenable, there is a Haar measure $m$ on $\mathbb{Z}^\infty$ and we can use it to measure the relative size of $K$ in $\mathbb{Z}^\infty$ and this indirectly measures the size of $G$.
 For example, if we define a renormalized rank function to be $r(G):=-\ln(m(K)),$ this will then be a non-negative real number that measures the size of quotient groups between $\{0\}$ and  $\mathbb{Z}^\infty.$ We would then obtain expected properties such as
 $r(G_1 \oplus G_2)= r(G_1) + r(G_2).$ However, it seems technically better to instead define our  ``renormalized'' rank function using F\o lner sequences instead of the measures obtained from amenability.
\end{remark}

We should mention that the finite generation hypothesis is conventional but is not really necessary.
Finite generation was used when we said that fixing a F\o lner sequence $\mathscr{F}_n$ for $\Z,$ we obtain a F\o lner sequence for any finitely generated abelian group $G,$ by considering $\mathscr{F}_n\otimes G.$ However, we could drop finite generation by considering more and more generators as $n$ increases. The following lemma indicates how this can be done:

\begin{lemma}\label{lem:4.18} If $G$ is a countably generated abelian group, with generators $(g_i),$ then from the above F\o lner sequence $\mathcal G_n$ for $\mathbb{Z}^\infty$ we can construct a F\o lner sequence for $G.$  \end{lemma}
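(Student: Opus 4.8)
The plan is to work with the presentation $0 \to K \to \mathbb{Z}^\infty \to G \to 0$ supplied by the preceding remark, letting $q\colon \mathbb{Z}^\infty \to G$ be the quotient map normalized so that $q(e_i)=g_i$, where the $e_i$ are the standard generators of $\mathbb{Z}^\infty$. The obvious candidate for a F\o lner sequence in $G$ is the image $q(\mathcal G_n)$ of the boxes $\mathcal G_n=\{-N_n,\dots,N_n\}^n$. I would introduce this set but not argue with it directly: because $q$ can be highly non-injective on $\mathcal G_n$, the cardinality $|q(\mathcal G_n)|$ may be far smaller than $|\mathcal G_n|$, and then the naive estimate $|q(A)\Delta q(B)|\le|A\Delta B|$ is too wasteful to control the F\o lner ratio, whose denominator is $|q(\mathcal G_n)|$. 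To sidestep this collapse I would pass to pushforward measures and only extract sets at the very end.

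Concretely, I would define probability measures $\mu_n$ on $G$ by $\mu_n(\{h\})=|q^{-1}(h)\cap\mathcal G_n|/|\mathcal G_n|$, the distribution of $q$ applied to a uniformly random point of $\mathcal G_n$. The central point is that these measures are asymptotically translation invariant. Fixing $y\in G$ and a lift $\tilde y$ with $q(\tilde y)=y$, one has $\mu_n(\{h-y\})=|q^{-1}(h)\cap(\tilde y+\mathcal G_n)|/|\mathcal G_n|$; since the fibres $q^{-1}(h)$ partition $\mathbb{Z}^\infty$, summing over $h$ the elementary bound $\bigl|\,|q^{-1}(h)\cap(\tilde y+\mathcal G_n)|-|q^{-1}(h)\cap\mathcal G_n|\,\bigr|\le|q^{-1}(h)\cap((\tilde y+\mathcal G_n)\Delta\mathcal G_n)|$ yields
\[
\|y_*\mu_n-\mu_n\|_1 \;\le\; \frac{|(\tilde y+\mathcal G_n)\Delta\mathcal G_n|}{|\mathcal G_n|}.
\]
This tends to $0$ as $n\to\infty$ by the F\o lner property of $\mathcal G_n$ in $\mathbb{Z}^\infty$. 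The essential gain is that one divides here by the \emph{full} cardinality $|\mathcal G_n|$, so the multiplicities of $q$ are absorbed and no collapse occurs.

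It then remains to manufacture honest F\o lner sets from the almost invariant measures $\mu_n$, which I would do by the standard layer-cake (Namioka) device. Writing $E_{n,t}:=\{h\in G:\mu_n(\{h\})>t\}$, the coarea identities $\sum_h|\mu_n(\{h-y\})-\mu_n(\{h\})|=\int_0^\infty|(y+E_{n,t})\Delta E_{n,t}|\,dt$ and $\int_0^\infty|E_{n,t}|\,dt=1$ allow an averaging argument to select, for each finite family $y_1,\dots,y_k$, a threshold $t$ for which the nonempty level set $E_{n,t}$ satisfies $|(y_i+E_{n,t})\Delta E_{n,t}|/|E_{n,t}|$ smaller than the sum of the defects $\|(y_i)_*\mu_n-\mu_n\|_1$. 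Enumerating $G=\{y_1,y_2,\dots\}$ and applying this at stage $n$ to $y_1,\dots,y_n$ with tolerance tending to zero, a diagonal choice of thresholds produces a single sequence $\{F_n\}$ of finite subsets of $G$ that is F\o lner for every element of $G$.

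I expect the layer-cake extraction to be routine once set up, and the real content to lie in the asymptotic-invariance estimate together with the insistence on normalizing by $|\mathcal G_n|$ rather than by $|q(\mathcal G_n)|$. Accordingly the main obstacle is the non-injectivity of $q$ on the boxes $\mathcal G_n$: it is what defeats a direct symmetric-difference argument for the image sets and forces the detour through pushforward measures. Verifying that $q(\mathcal G_n)$ itself is F\o lner would instead require the genuinely harder geometric estimate that the boundary of $q(\mathcal G_n)$ in $G$ is negligible compared with $|q(\mathcal G_n)|$, which the measure route circumvents entirely.
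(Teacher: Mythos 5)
Your proof is correct, but it takes a genuinely different route from the paper's. The paper's proof is one line: apply the quotient map $\pi\colon\mathbb{Z}^\infty\to G$ to the boxes and declare the image sets $\prod_{k=1}^{n}\{-N_ng_k,\dots,N_ng_k\}$ (i.e.\ the sumsets $\sum_{k\le n}\{-N_n,\dots,N_n\}g_k$) to be a F\o lner sequence for $G$. You instead push forward the uniform measures on $\mathcal G_n$, prove asymptotic invariance of the densities normalized by the full $|\mathcal G_n|$, and extract F\o lner sets as level sets via the Namioka layer-cake argument. Your diagnosis of the obstacle is accurate and worth stating plainly: as a general principle, the image of a F\o lner sequence under a surjective homomorphism need \emph{not} be F\o lner (the denominator $|\pi(\mathcal G_n)|$ can collapse relative to $|\mathcal G_n|$), so the paper's justification is an assertion rather than a proof, and your detour is the standard rigorous repair; it also proves the more general fact that quotients of amenable groups are amenable. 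What your route gives up is explicitness: the rest of the section relies on the concrete form of the sets --- the notation $\mathcal G_n\cap G$, the multiplicativity of cardinalities in Lemma \ref{lemma: q is additive}, and the torsion estimate in Lemma \ref{lemma:TorsionDoesntMatter} all exploit the sumset structure --- and your level sets $E_{n,t}$ carry none of that. It is worth noting that for these particular boxes the images really are F\o lner, by an elementary argument that neither you nor the paper supplies: for fixed $g_j$ with $j\le n$, decompose $\pi(\mathcal G_n)$ into its fibres over the cosets of $\langle g_j\rangle$; every point of a fibre lies on a progression of $2N_n+1$ consecutive $g_j$-steps contained in that fibre, so each fibre is a union of maximal $g_j$-runs of length at least $2N_n+1$, whence $|(g_j+\pi(\mathcal G_n))\,\Delta\,\pi(\mathcal G_n)|\le 2|\pi(\mathcal G_n)|/(2N_n+1)$, and subadditivity of the F\o lner defect handles arbitrary fixed $g\in G$. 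That estimate --- not the bare functoriality you rightly distrust --- is what legitimizes the paper's choice of sets, and adopting it would let you keep the explicit sequence that the later lemmas require.
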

\begin{proof}

To say that the $(g_i)$ are generators means that we consider $G$ to be given by $\sum z_i g_i$ where the $g_i$ are the generators and the $z_i$ are (signed) integer coefficients. If we are given a F\o lner sequence of the form $\{-N_n\cdots,N_n\}^n$ for $\mathbb{Z}^\infty,$ then in view of the fact that the given group $G$ is a quotient of $\mathbb{Z}^\infty,$ we can simply apply the canonical quotient map $\pi: \mathbb{Z}^\infty \to G$ to the given F\o lner sequence. This means that the sequence of set products denoted $\prod^{n}_{k=1} \{-N_n g_k, \cdots,N_n g_k\}$ is  a F\o lner sequence for $G.$ This is the desired construction, and gives us a reasonably explicit formula for a F\o lner sequence of $G.$
\end{proof}

Define the notation $\mathcal G_n \cap G$ for a F\o lner sequence of the above form. The F\o lner sequence $\mathcal G_n$ provides qualitatively similar but distinct behaviour from the previously considered F\o lner sequence $\mathcal F_n$.
This can be seen by comparing the above Lemma \eqref{lem:cardinality.of.F.1} with Lemma \eqref{lemma:TorsionDoesntMatter} below.

\begin{definition}
Let $G$ be a countably generated abelian group, then we define a map $q$ on $G$ as follows:
\begin{align*}
q(G):= \lim_{n\to\infty}\frac{\ln | \mathcal G_n \cap G |}{\ln | \mathcal G_n |}.
\end{align*}
\end{definition}

\begin{lemma} \label{lemma: q is additive}
Let $G$ be a countably generated abelian group, then $q$ is additive on a short exact sequence.
\end{lemma}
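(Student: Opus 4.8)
The plan is to present all three groups as quotients of a single copy of $\mathbb{Z}^\infty$ through a compatible choice of generators, and then to reduce the additivity of $q$ to a counting statement about how one F\o lner box is cut into fibres. Concretely, I would choose generators $(\bar g_i)$ presenting $G_3$ as a quotient of $\mathbb{Z}^\infty$, lift each $\bar g_i$ to some $g_i\in G_2$ (possible since $G_2\to G_3$ is onto), and choose generators $(h_j)$ presenting $G_1$, viewed inside $G_2$ via the injection $G_1\hookrightarrow G_2$. Interleaving the $g_i$ and $h_j$ presents $G_2$ as a quotient $\pi_2\colon\mathbb{Z}^\infty\to G_2$ with kernel $K_2$; setting $K_3:=\pi_2^{-1}(G_1)$ gives a tower $K_2\subseteq K_3\subseteq\mathbb{Z}^\infty$ with $\mathbb{Z}^\infty/K_3\cong G_3$ and $K_3/K_2\cong G_1$. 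With these choices the three F\o lner sequences $\mathcal G_n\cap G_2$, $\mathcal G_n\cap G_3$, $\mathcal G_n\cap G_1$ are the images in $G_2,G_3,G_1$ of one box $\mathcal G_n\subseteq\mathbb{Z}^\infty$, and the desired equality $q(G_2)=q(G_1)+q(G_3)$ becomes
\[ \ln|\mathcal G_n\cap G_2| = \ln|\mathcal G_n\cap G_1| + \ln|\mathcal G_n\cap G_3| + o(\ln|\mathcal G_n|). \]

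I would then count by fibres of the induced surjection $\mathcal G_n\cap G_2\to\mathcal G_n\cap G_3$: the left-hand cardinality is the sum over $y\in\mathcal G_n\cap G_3$ of the corresponding fibre sizes. Lifting to $\mathbb{Z}^\infty$, the fibre over $y$ counts the $K_2$-cosets contained in the single $K_3$-coset $\kappa_y$ over $y$ that meet $\mathcal G_n$; choosing a representative of $\kappa_y$ inside $\mathcal G_n$ and translating identifies this with the number of $K_2$-cosets of $K_3$ meeting a translate of the box. For the trivial coset this number is $|\mathcal G_n\cap G_1|$, so in the free case every fibre has size exactly $|\mathcal G_n\cap G_1|$ and the displayed identity holds with no error term at all; this is the combinatorial counterpart of the multiplicativity of Haar measure along $K_2\subseteq K_3\subseteq\mathbb{Z}^\infty$ indicated in the remark above.

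The main obstacle is that different $K_3$-cosets cut the box $\mathcal G_n$ differently, so the fibre sizes genuinely vary and, in the presence of relations or torsion, need not equal $|\mathcal G_n\cap G_1|$. Controlling this variation is precisely the role of the F\o lner condition: passing from the box to a translate alters it, and hence alters each coset-intersection count, by only a vanishing fraction of $|\mathcal G_n|$, so that every fibre size is squeezed between $|\mathcal G_n\cap G_1|\,|\mathcal G_n|^{-\epsilon_n}$ and $|\mathcal G_n\cap G_1|\,|\mathcal G_n|^{\epsilon_n}$ with $\epsilon_n\to 0$. Summing these two-sided bounds over the $|\mathcal G_n\cap G_3|$ fibres, taking logarithms and dividing by $\ln|\mathcal G_n|$ annihilates the boundary error in the limit and yields the displayed identity, and the same estimate shows en passant that the limit defining $q$ exists. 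I expect the genuinely technical points to be the uniform boundary estimate over all cosets and the verification that the interleaved generating set makes the three F\o lner sequences compatible up to the torsion contributions that, by the forthcoming principle that torsion does not affect the logarithmic limit, wash out in the end; the algebraic reduction and the fibre count itself are routine.
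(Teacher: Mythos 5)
Your strategy is the same one the paper uses: its entire proof consists of asserting that intersecting the short exact sequence with $\mathcal G_n$ gives an ``exact sequence'' of finite sets and hence $|G\cap\mathcal G_n|=|H\cap\mathcal G_n|\,|K\cap\mathcal G_n|$, which is precisely your fibre count in the idealized case where every fibre has size exactly $|H\cap\mathcal G_n|$. So you are elaborating rather than replacing the paper's argument, and you correctly isolate the two points the paper glosses over: the fibres genuinely vary, and the three F\o lner sequences must be chosen compatibly.

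The gap is that the second point, which you defer as a routine matter of torsion washing out, is where the lemma actually lives or dies, and it is not a torsion phenomenon. The value of $q(G)$ depends on the chosen presentation $\mathbb{Z}^\infty\to G$ (specifically on how fast the number of active generators grows with $n$), and your interleaving forces the sub and the quotient to be measured using only ``half'' of the coordinates of $\mathcal G_n$ each. Concretely, for $0\to\mathbb{Z}^\infty\to\mathbb{Z}^\infty\oplus\mathbb{Z}^\infty\to\mathbb{Z}^\infty\to 0$ with each group given its own standard presentation, all three groups have $q=1$ (and one always has $q\le 1$, since $|\pi(\mathcal G_n)|\le|\mathcal G_n|$), so additivity fails outright; with your interleaved presentation the sub and quotient each receive $q=\tfrac12$ and the identity holds. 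Thus the statement is only true relative to a coherent system of presentations, and closing your argument requires either proving $q$ is presentation-independent (false, by the example) or building the compatibility into the definition of $q$ --- neither of which your proposal, or the paper, does. A secondary but real issue is your uniform boundary estimate: the base points $x_0$ of the cosets range over $\mathcal G_n$ itself, so the F\o lner condition in its usual form (fixed translate, $n\to\infty$) gives no control; a corner translate of the box overlaps it in only a $\prod_k\tfrac{N_n+1}{2N_n+1}\approx 2^{-n}$ fraction, which is $|\mathcal G_n|^{-o(1)}$ only because $N_n\to\infty$, and even then overlap of the boxes does not by itself lower-bound the intersection of a badly placed coset with the box. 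These two points are the substance of the proof, not its routine periphery.
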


\begin{proof}
Let
\[ 0 \rightarrow H \rightarrow G \rightarrow K \rightarrow 0 \]
be a short exact sequence of countably generated abelian groups. Then we have
\[ 0 \rightarrow H \cap \mathcal G_n  \rightarrow G \cap \mathcal G_n \rightarrow K \cap \mathcal G_n \rightarrow 0. \]
Now taking the cardinality we have
\begin{align*}
|G \cap \mathcal G_n |= |H \cap \mathcal G_n | |K \cap \mathcal G_n |,
\end{align*}
then comparing with the definition of  $q$ we have $q(G)= q(H)+q(K)$.
\end{proof}

\begin{lemma} \label{lemma:TorsionDoesntMatter}
 If a countably generated abelian group $G$ has only torsion elements, then $q(G)=0$.
 \end{lemma}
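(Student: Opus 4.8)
The plan is to estimate the cardinality $|\mathcal G_n \cap G|$ directly and to compare its logarithm against $\ln|\mathcal G_n| = n\,\ln(2N_n+1)$. Writing $\pi\colon \Z^\infty \to G$ for the quotient map sending the $k$-th standard generator to $g_k$, recall from \cref{lem:4.18} that $\mathcal G_n \cap G = \pi(\mathcal G_n) = \{\sum_{k=1}^n a_k g_k : a_k \in \{-N_n,\dots,N_n\}\}$, while $|\mathcal G_n| = (2N_n+1)^n$. The only structural input I need is that $G$ is torsion: each generator $g_k$ then has finite order $d_k := \mathrm{ord}(g_k)$.

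First I would bound the numerator. Since $\langle g_k\rangle$ is a finite cyclic group of order $d_k$, the set of multiples $\{a g_k : |a|\le N_n\}$ has at most $d_k$ distinct elements no matter how large $N_n$ is; reducing each coefficient $a_k$ modulo $d_k$ shows $\mathcal G_n \cap G \subseteq \langle g_1,\dots,g_n\rangle$, a finite group of order at most $\prod_{k=1}^n d_k$. Hence $\ln|\mathcal G_n\cap G| \le \sum_{k=1}^n \ln d_k$, a bound that is \emph{independent of} $N_n$. This is the crucial qualitative contrast with \cref{lem:cardinality.of.F.1}: torsion coordinates saturate and contribute a bounded, rather than a growing, factor, which is precisely the ``distinct behaviour'' anticipated in the remark preceding the statement.

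Combining the two estimates gives
\[ 0 \le \frac{\ln|\mathcal G_n\cap G|}{\ln|\mathcal G_n|} \le \frac{\sum_{k=1}^n \ln d_k}{n\,\ln(2N_n+1)} = \frac{1}{\ln(2N_n+1)}\cdot\frac1n\sum_{k=1}^n \ln d_k, \]
and it remains to drive the right-hand side to $0$. When the orders $d_k$ are bounded this is immediate, since the average $\frac1n\sum_{k=1}^n \ln d_k$ stays bounded while $\ln(2N_n+1)\to\infty$. In the general case the main obstacle is exactly this last step: one must invoke that $N_n$ grows sufficiently rapidly, fast enough that $\ln(2N_n+1)$ outpaces the average of the log-orders. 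I would therefore make explicit (as is implicit in the hypothesis that $\{\mathcal G_n\}$ is the rapidly-growing F\o lner sequence furnished by the earlier lemma) that $N_n$ is taken large enough to dominate the torsion of $G$; with that the quotient tends to $0$ and $q(G)=0$ follows. It is worth noting that some such growth condition is genuinely needed, since a F\o lner sequence with $N_n$ increasing only linearly can yield a nonzero limit for a torsion group whose orders $d_k$ grow quickly.
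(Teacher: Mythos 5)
Your argument is sound and rests on the same basic counting idea as the paper's one-line proof --- for each torsion generator $g_k$ the set of multiples $\{a g_k : |a|\le N_n\}$ saturates at $d_k=\mathrm{ord}(g_k)$ elements --- but you carry it considerably further, and the extra care pays off. The paper only observes that each per-coordinate ratio $\min(2N_n+1,d_k)/(2N_n+1)$ is eventually less than one and tends to zero; that controls the ratio of cardinalities $|\mathcal G_n\cap G|/|\mathcal G_n|$, which is \emph{not} the quantity appearing in the definition of $q$ (a ratio of logarithms), so the paper's proof is incomplete at exactly the point you isolate. Your bound $\ln|\mathcal G_n\cap G|\le\sum_{k\le n}\ln d_k$ reduces the claim to $\frac1n\sum_{k\le n}\ln d_k = o(\ln N_n)$, which is automatic when the orders $d_k$ are bounded but genuinely requires a growth hypothesis on $N_n$ relative to $G$ otherwise --- and note that the ``sufficiently rapid growth'' demanded of $N_n$ in the cited lemma concerns only the F\o lner property for $\Z^\infty$ (any $N_n\to\infty$ suffices there) and imposes no constraint involving the torsion of $G$. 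Your closing caveat is correct and is worth stating as an actual counterexample: with $N_n=n$ and $G=\bigoplus_k\Z/(2k+1)$ with the standard generators, one gets $\ln|\mathcal G_n\cap G|=\sum_{k=1}^n\ln(2k+1)\sim n\ln(2n+1)=\ln|\mathcal G_n|$, so $q(G)=1$ for this torsion group. Hence the lemma as stated fails unless the growth of $N_n$ is tied to the orders $d_k$ (or those orders are bounded); the hypothesis you propose to add is a necessity, not a convenience, and your proof is the correct repair of a gap that the paper's argument leaves open.
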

 \begin{proof}
  This is because for a torsion element $g_k$, the ratio $\frac{|\{-N_n g_k, \cdots,N_n g_k\}|}{|\{-N_n , \cdots,N_n \}|}$ is less than one if $N_n$ exceeds the order of $g_k,$ and ultimately goes to zero. \end{proof}

Just as before, we can define a character using this F\o lner sequence.
\begin{definition}\label{linear map}
Let $A$ be a C*-algebra whose $K$-groups are countably generated abelian groups. Let $\mathcal G_n$ be a F\o lner sequence for the group $\mathbb{Z}^\infty$. Define the map $Q(A)$ on the set of these C*-algebras as follows
\begin{align}
Q (A):= q(K_{0}(A)) - q(K_{1}(A)) .
\end{align} \label{infinite rank}
\end{definition}
The proof that the above map is multiplicative, as well as additive; and thus is a character on a suitable domain, is deferred to Theorem \ref{th:countably.generated.case}.

\begin{example}
Assume that $K_{0}(A)=\mathbb{Z}^{\infty}$ and $K_{1}(A)=0$. By the definition of $q$, we have $Q (A)=\lim_{n \rightarrow \infty} \frac{\ln| \mathcal G_n|}{\ln|\mathcal G_n|}=1$.
\end{example}

\begin{example}
Suppose that $K_{0}(A)=\mathbb{Z}^{n}$ and $K_{1}(A)=0$. In this scenario, it follows that $Q (A)=0$.
\end{example}

\section{Defining a character by tensoring by C*-algebras}

The second approach that we will propose to define a character on the Grothendieck rings, following a e-mail conversation with C. Schochet, is to use the universal UHF-algebra. In the following lemma we show that complex UHF-algebras and therefore the universal UHF-algebra are in the KTP class.

\begin{lemma}
Let $U$ be a complex UHF-algebra. Then $U$ is in the KTP class.
\end{lemma}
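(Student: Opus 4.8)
The plan is to place $U$ in the bootstrap class $N$ and then invoke the containments $N \subseteq \text{UCT} \subseteq \text{KTP}$ recalled earlier in the paper. The structural fact I would exploit is that a complex UHF-algebra is by definition an inductive limit
\[ U = \varinjlim \left( M_{n_1}(\C) \to M_{n_2}(\C) \to \cdots \right) \]
of full matrix algebras with unital connecting $*$-homomorphisms.

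First I would observe that each building block $M_n(\C)$ lies in the bootstrap class. Indeed, $M_n(\C)$ is stably isomorphic to $\C$, since $M_n(\C) \tensor \mathcal K \isom \mathcal K \isom \C \tensor \mathcal K$; as $N$ is generated by $\C$ and is closed under stable isomorphism, we get $M_n(\C) \in N$ for every $n$. Next, because $N$ is closed under inductive limits and $U$ is the inductive limit displayed above, it follows that $U \in N$. Combining this with the inclusions $N \subseteq \text{UCT}$ and $\text{UCT} \subseteq \text{KTP}$ established earlier, we conclude that $U$ is in the KTP class.

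The argument carries no serious obstacle; the one point worth flagging is that we neither need nor may appeal to finite generation of $K$-theory here, since for the universal UHF-algebra $K_0(U) = \mathbb{Q}$ is not finitely generated. This is precisely why the proof routes through membership in the UCT class rather than through any finiteness condition on the K-groups, the UCT class being contained in the KTP class. As a consistency check, once $U$ is known to be in the KTP class the relevant sequences degenerate as expected: $K_1(U) = 0$ and $K_0(U)$ is a torsion-free subgroup of $\mathbb{Q}$, hence flat, so every $\mathrm{Tor}$ term in \cref{th:KPT} vanishes and the map $\alpha$ is an isomorphism, recovering the K\"unneth formula for $U$.
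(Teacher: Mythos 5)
Your proof is correct and follows essentially the same route as the paper: both write $U$ as an inductive limit of matrix algebras and invoke closure of the relevant class under inductive limits. The only difference is that you justify membership of $M_n(\C)$ by placing it in the bootstrap class via stable isomorphism to $\C$ and then using the containments $N\subseteq \mathrm{UCT}\subseteq \mathrm{KTP}$, whereas the paper asserts directly that each $M_{k_i}(\C)$ lies in the KTP class — your version is slightly more explicit on that point.
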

\begin{proof}
Since $U$ is a complex UHF-algebra, then it is the inductive limit of the sequence
\begin{align}
M_{k_{1}}(\mathbb{C}) \xrightarrow{\varphi_{1}} M_{k_{2}}(\mathbb{C}) \xrightarrow{\varphi_{2}}  M_{k_{3}}(\mathbb{C}) \xrightarrow{\varphi_{3}} ...
\end{align}
but $M_{k_{i}}(\mathbb{C})$ is in the KTP class for every $i$, and KTP  is closed under the inductive limit, hence $U$ is in the KTP class.
\end{proof}

\begin{lemma}\label{UHF}
Let $A$ be any complex C*-algebra and $U$ be the universal UHF-algebra, then $K_{*}(A \otimes U)$ is a $\mathbb{Q}$-module.
\end{lemma}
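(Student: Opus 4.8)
The plan is to read off $K_*(A\tensor U)$ directly from the K\"unneth-type short exact sequences of \cref{KTP}, exploiting two facts: that $U$ lies in the KTP class (by the preceding lemma) and that its K-theory is as simple as possible, namely $K_0(U)=\mathbb{Q}$ and $K_1(U)=0$. Since $U$ is nuclear there is a unique tensor product to consider, and $A\tensor U\cong U\tensor A$; so I may freely place $U$ in the privileged (KTP-class) slot of \cref{KTP} and treat the arbitrary algebra $A$ as the second variable. This is what lets the hypothesis on $A$ be weakened all the way to ``any complex C*-algebra.''

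Carrying this out, the degree-zero sequence of \cref{KTP} becomes $0\to K_0(U)\otimes K_0(A)\oplus K_1(U)\otimes K_1(A)\to K_0(U\tensor A)\to \mathrm{Tor}(K_0(U),K_1(A))\oplus\mathrm{Tor}(K_1(U),K_0(A))\to 0$. Substituting $K_0(U)=\mathbb{Q}$ and $K_1(U)=0$ collapses the left-hand term to $\mathbb{Q}\otimes K_0(A)$ and annihilates the right-hand Tor term: the summand $\mathrm{Tor}(0,K_0(A))$ vanishes trivially, while $\mathrm{Tor}(\mathbb{Q},K_1(A))=0$ because $\mathbb{Q}$ is torsion-free, hence flat. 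The sequence therefore forces an isomorphism $K_0(U\tensor A)\cong \mathbb{Q}\otimes K_0(A)$. The degree-one sequence is treated identically: its left-hand term reduces to $\mathbb{Q}\otimes K_1(A)$ and its Tor term $\mathrm{Tor}(\mathbb{Q},K_0(A))\oplus\mathrm{Tor}(0,K_1(A))$ vanishes for the same reasons, yielding $K_1(U\tensor A)\cong \mathbb{Q}\otimes K_1(A)$.

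Finally I would note that for any abelian group $G$ the group $\mathbb{Q}\otimes G$ carries a canonical $\mathbb{Q}$-module structure, scalar multiplication acting through the $\mathbb{Q}$ factor. Hence both $K_0(A\tensor U)$ and $K_1(A\tensor U)$, being isomorphic to objects of this form, are $\mathbb{Q}$-modules, and so is the graded group $K_*(A\tensor U)$. There is no serious obstacle here: the entire argument is driven by the vanishing of the Tor terms, which rests only on the flatness of $\mathbb{Q}$ together with the triviality of $K_1(U)$. The single point meriting care is the legitimacy of invoking \cref{KTP} at all, i.e.\ that it is $U$ — rather than $A$ — that must lie in the KTP class; this is precisely what the preceding lemma provides.
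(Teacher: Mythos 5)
Your argument is correct and is essentially the paper's own proof: both invoke the fact that $U$ lies in the KTP class, substitute $K_0(U)=\mathbb{Q}$ and $K_1(U)=0$ into the two K\"unneth short exact sequences, observe the Tor terms vanish, and conclude $K_*(A\otimes U)\cong K_*(A)\otimes\mathbb{Q}$, which is a $\mathbb{Q}$-module. Your explicit remark about placing $U$ (rather than $A$) in the KTP-class slot is a useful clarification that the paper leaves implicit, but it does not change the route.
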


\begin{proof}
Suppose that $U$ is the universal complex UHF-algebra, since every complex UHF-algebra belongs to the KTP class, therefore there exists a short exact sequences
\begin{multline*} 0 \rightarrow K_0(A) \otimes K_0(U) \oplus K_1(A) \otimes K_1(U) \rightarrow K_{0}(A \tensor U) \rightarrow \\ \mathrm{Tor}(K_0(A),K_1(U)) \oplus \mathrm{Tor}(K_1(A),K_0(U))
  \rightarrow 0, \end{multline*}
  and
  \begin{multline*} 0 \rightarrow K_0(A) \otimes K_1(U) \oplus K_1(A) \otimes K_0(U) \rightarrow K_{1}(A \tensor U) \rightarrow \\ \mathrm{Tor}(K_0(A),K_0(U)) \oplus \mathrm{Tor}(K_1(A),K_1(U))
  \rightarrow 0, \end{multline*}
  now by applying $K_{0}(U)=\mathbb{Q}$ and $K_{1}(U)=0$ into these two sequences, we have
  \begin{align*}
  K_{*}(A) \otimes \mathbb{Q} \cong K_{*}(A \otimes U)
  \end{align*}
  where $*=0,1$.
\end{proof}

\begin{definition}
Consider a collection of complex C*-algebras, and suppose that $U$ is the universal complex UHF-algebra. Define a map $\chi_{2}(A):=m-n$ where $m$ is the $\mathbb{Q}$-dimension of the abelian group $K_1(A \otimes U)$ and $n$ is the $\mathbb{Q}$-dimension of the abelian group $K_0(A \otimes U)$, for every  C*-algebra $A$ in the collection.
\end{definition}

\begin{corollary}\label{additive-chi2}
Consider a collection of complex C*-algebras, then $\chi_{2}$ as  defined above is additive.
\end{corollary}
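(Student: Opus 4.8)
The plan is to reduce the additivity of $\chi_2$ to the already-established additivity of $\chi_1$, exploiting the key computation in \cref{UHF}. The idea is that $\chi_2$ is really $\chi_1$ applied after tensoring with $U$, but with the finite-generation hypothesis removed and $\mathbb{Z}$-ranks replaced by $\mathbb{Q}$-dimensions. Concretely, by \cref{UHF} we have $K_*(A\otimes U)\cong K_*(A)\otimes\mathbb{Q}$, so $K_0(A\otimes U)$ and $K_1(A\otimes U)$ are already $\mathbb{Q}$-vector spaces; the numbers $n$ and $m$ in the definition of $\chi_2$ are simply their $\mathbb{Q}$-dimensions, and $\chi_2(A)=\dim_{\mathbb{Q}} K_1(A\otimes U)-\dim_{\mathbb{Q}} K_0(A\otimes U)$.

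First I would take a short exact sequence of complex C*-algebras
\[
0\longrightarrow A\longrightarrow C\longrightarrow B\longrightarrow 0
\]
and tensor it by the universal UHF-algebra $U$. Since $U$ is nuclear, tensoring by $U$ is exact, so we obtain a short exact sequence $0\to A\otimes U\to C\otimes U\to B\otimes U\to 0$. Next I would pass to the induced six-term cyclic exact sequence in $K$-theory, exactly as in the proof of \cref{additive-chi1}, but now written for the algebras $A\otimes U$, $C\otimes U$, $B\otimes U$:
\[
\begin{tikzcd}
K_{0}(A\otimes U)  \ar[r]& K_{0}(C\otimes U)   \ar[r] & K_{0}(B\otimes U) \ar[d] \\
K_{1}(B\otimes U) \ar[u] & K_{1}(C\otimes U)  \ar[l] & K_{1}(A\otimes U)  \ar[l]
\end{tikzcd}
\]
By \cref{UHF} every group appearing here is a $\mathbb{Q}$-vector space, so this is a cyclic exact sequence of $\mathbb{Q}$-modules. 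Now I would apply \cref{function} with $S$ taken to be $\mathbb{Q}$-dimension: since every short exact sequence of $\mathbb{Q}$-vector spaces splits, $\dim_{\mathbb{Q}}$ is additive in the sense of \cref{functionS}, and the conclusion of \cref{function} gives precisely the alternating relation among the six dimensions, which rearranges to $\chi_2(C)=\chi_2(A)+\chi_2(B)$.

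There is one point that requires care, which I expect to be the main obstacle: one must ensure that the $\mathbb{Q}$-dimensions appearing are finite, so that the differences $m-n$ and the additivity manipulation are meaningful. For a general complex C*-algebra $A$ the groups $K_*(A)$ need not be finitely generated, so $K_*(A)\otimes\mathbb{Q}$ could be infinite-dimensional, in which case $\chi_2$ is not defined as a finite real number. The clean way to handle this is to restrict the collection to those C*-algebras for which $K_*(A\otimes U)$ is finite-dimensional over $\mathbb{Q}$ (equivalently $K_*(A)\otimes\mathbb{Q}$ is finite-dimensional); this is the implicit domain on which $\chi_2$ is defined, and it is closed under the relevant extensions because finite-dimensionality of the two outer terms in a short exact sequence of vector spaces forces finite-dimensionality of the middle term. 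Granting this, the additivity argument is entirely formal once \cref{UHF} has converted everything into a statement about $\mathbb{Q}$-vector spaces, and the essential content has already been carried by \cref{UHF} and \cref{function}.
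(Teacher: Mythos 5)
Your proof follows essentially the same route as the paper's: tensor the short exact sequence by $U$ (using nuclearity for exactness), pass to the six-term cyclic sequence, invoke \cref{UHF} to see all terms are $\mathbb{Q}$-modules, and apply \cref{function} with $S=\dim_{\mathbb{Q}}$. Your added caveat about restricting to algebras with finite-dimensional $K_*(A)\otimes\mathbb{Q}$ is a legitimate point the paper leaves implicit, and your observation that such a domain is closed under extensions is the right way to handle it.
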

\begin{proof}
 First consider a short exact sequence
 \begin{align}\label{seq-2}
0 \longrightarrow A \longrightarrow C \longrightarrow B \longrightarrow 0
\end{align}
 and suppose that $U$ is the universal complex UHF-algebra, therefore since complex UHF-algebras are nuclear, tensoring sequence \eqref{seq-2} by $U$ gives rise to again a short exact sequence as follows
  \begin{align*}
0 \longrightarrow A \otimes U \longrightarrow C \otimes U \longrightarrow B\otimes U  \longrightarrow 0
\end{align*}
which induces a six term cyclic exact sequence
\[
\begin{tikzcd}
K_{0}(A \otimes U)  \ar[r, "\varphi_{1}"]& K_{0}(C \otimes U)   \ar[r,"\varphi_{2}"] & K_{0}(B \otimes U) \ar[d,"\varphi_{3}"] \\
K_{1}(B \otimes U) \ar[u,"\varphi_{0}"] & K_{1}(C \otimes U)  \ar[l,"\varphi_{5}"] & K_{1}(A \otimes U)  \ar[l,"\varphi_{4}"]
\end{tikzcd}
\]
but according to the Lemma \eqref{UHF}, $K_{*}(A_{i}\otimes U)$ is a $\mathbb{Q}$-module for every complex C*-algebra $A_{i}$, now applying Lemma \eqref{function} and definition of $\chi_{2}$ implies that $$\chi_{2}(C)=\chi_{2}(B)+\chi_{2}(A).$$
\end{proof}

\section{Characters on Grothendieck rings of complex C*-algebras}
We now construct two characters for Grothendieck rings of complex C*-algebras. In the first case we consider Grothendieck rings of complex C*-algebras with finitely generated K-theory that are in the KTP class, we define $\chi_{1}(A):=m-n$ where $m$ is the number of free generators of the finitely generated abelian group $K_1(A)$ and $n$ is the number of free generators of the finitely generated abelian group $K_0(A)$.

In the second case we consider Grothendieck rings of complex C*-algebras that are in the KTP class and we define $\chi_{2}(A):=m-n$ where $m$ is $\mathbb{Q}$-dimension of abelian group $K_1(A \otimes U)$ and $n$ is $\mathbb{Q}$-dimension of abelian group $K_0(A \otimes U)$, for every  C*-algebra $A$ in the Grothendieck rings and we suppose that $U$ is the universal complex UHF-algebra, notice that $  K_{*}(A \otimes U)\cong  K_{*}(A) \otimes \mathbb{Q} $.

To show that $\chi_{1}$ and $\chi_{2}$ are characters, we must first of all check they are  well-defined on the Grothendieck ring. This means checking compatibility with the equivalence relations. First of all, $K$-theory respects isomorphism of $C^*$-algebras. Secondly, $K$-theory respects (finite) direct sums. Therefore we check the additivity and multiplicativity of the character, meaning if
$$0\longrightarrow A\longrightarrow C \longrightarrow B \longrightarrow 0 $$
is a short exact sequence then $\chi$ must follow the condition $\chi(C)=\chi(A)+\chi(B)$, and we showed in corollary \eqref{additive-chi1} and corollary \eqref{additive-chi2} both $\chi_{1}$ and $\chi_{2}$ are additive. Therefore in order to see that that $ \chi_{1}$ and $\chi_{2}$ are characters we check that they are multiplicative and this means that we need to check $\chi(A \otimes B)=\chi(A)\chi(B)$.

\begin{definition}
Consider the Grothendieck rings of complex C*-algebras then we say that an additive map $\chi$ on the Grothendieck rings is a character if it is multiplicative, in other words $\chi(A\tensor B)=\chi(A)\chi(B)$, where $A$ and $B$ are complex C*-algebras.
\end{definition}

\begin{theorem} Suppose that $A$ and $B$ have finitely generated K-theory and $A$ or $B$ is in the KTP class, then $\chi_{1}(A\tensor B)=\chi_{1}(A)\chi_{1}(B).$
\end{theorem}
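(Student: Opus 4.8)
The plan is to prove multiplicativity of $\chi_1$ by reducing the statement to a purely algebraic computation about ranks of finitely generated abelian groups, using the KTP sequences from \cref{th:KPT} to identify $K_*(A\tensor B)$ up to the relevant rank data. Recall that $\chi_1(A)=m-n$ where $m$ and $n$ are the ranks (numbers of free generators) of $K_1(A)$ and $K_0(A)$ respectively. Write $n_0,n_1$ for the ranks of $K_0(A),K_1(A)$ and $m_0,m_1$ for the ranks of $K_0(B),K_1(B)$, so that $\chi_1(A)=n_1-n_0$ and $\chi_1(B)=m_1-m_0$. The goal is to show $\chi_1(A\tensor B)=(n_1-n_0)(m_1-m_0)$.

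First I would invoke \cref{th:KPT}, which applies since one of $A,B$ is in the KTP class, to obtain the two short exact sequences computing $K_0(A\tensor B)$ and $K_1(A\tensor B)$. The key observation is that rank is additive on short exact sequences of finitely generated abelian groups (this follows from \cref{split}, since tensoring with $\mathbb Q$ turns the sequence into a split exact sequence of finite-dimensional $\mathbb Q$-vector spaces, and rank equals $\mathbb Q$-dimension after tensoring). Moreover the $\mathrm{Tor}$ terms appearing on the right of each KTP sequence are torsion groups and hence have rank zero; this is the standard fact that $\mathrm{Tor}(G,H)$ is a torsion group for finitely generated $G,H$. Therefore, applying rank additivity to each KTP sequence, the rank of $K_0(A\tensor B)$ equals the rank of $K_0(A)\otimes K_0(B)\oplus K_1(A)\otimes K_1(B)$, and similarly for $K_1(A\tensor B)$.

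Next I would compute these ranks directly. Since rank is multiplicative on tensor products of finitely generated abelian groups (the torsion parts tensor to torsion, and $\mathbb Z^a\otimes\mathbb Z^b=\mathbb Z^{ab}$), the rank of $K_0(A\tensor B)$ is $n_0 m_0 + n_1 m_1$ and the rank of $K_1(A\tensor B)$ is $n_0 m_1 + n_1 m_0$. Consequently
\begin{align*}
\chi_1(A\tensor B) &= (n_0 m_1 + n_1 m_0) - (n_0 m_0 + n_1 m_1) \\
&= n_1 m_1 - n_1 m_0 - n_0 m_1 + n_0 m_0 \\
&= (n_1-n_0)(m_1-m_0) = \chi_1(A)\chi_1(B).
\end{align*}

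The main obstacle, and the place where the hypotheses are genuinely used, is justifying that only the free parts contribute: one must carefully argue that the $\mathrm{Tor}$ terms contribute nothing to rank and that the tensor products in the left-hand terms have the expected ranks despite the presence of torsion summands in the $K$-groups. This is where finite generation of the $K$-theory is essential, since it guarantees the structure theorem applies and that all the groups in sight are finitely generated so that rank behaves additively and multiplicatively as claimed; the KTP hypothesis is what licenses the use of the sequences in the first place. Everything else is the routine bilinear-form identity displayed above.
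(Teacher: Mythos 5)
Your proposal is correct and follows essentially the same route as the paper: invoke the KTP short exact sequences, pass to $\mathbb{Q}$ (equivalently, to ranks) to kill the $\mathrm{Tor}$ terms and split the sequences, and then conclude by the bilinear identity $(n_0m_1+n_1m_0)-(n_0m_0+n_1m_1)=(n_1-n_0)(m_1-m_0)$. The only difference is that you spell out this final rank arithmetic explicitly, which the paper leaves implicit.
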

	\begin{proof} Under this hypothesis, we have the exact sequences
	 \begin{multline*}0 \rightarrow K_0(A) \otimes K_0(B) \oplus K_1(A) \otimes K_1(B) \rightarrow K_0(A \otimes B) \rightarrow\\ \mathrm{Tor}(K_0(A),K_1(B)) \oplus \mathrm{Tor}(K_1(A),K_0(B)) \rightarrow 0\
	 \end{multline*}
	 and
	 \begin{multline*}0 \rightarrow K_0(A) \otimes K_1(B) \oplus K_1(A) \otimes K_0(B) \rightarrow K_1(A \otimes B) \rightarrow\\ \mathrm{Tor}(K_0(A),K_0(B)) \oplus \mathrm{Tor}(K_1(A),K_1(B)) \rightarrow 0\
	 \end{multline*}
and tensoring with $Q$ we have
$$(K_0(A)\tensor Q) \otimes (K_0(B)\tensor Q) \oplus (K_1(A)\tensor Q) \otimes (K_1(B)\tensor Q) = K_0(A \otimes B)\tensor Q,$$
and
$$(K_0(A)\tensor Q) \otimes (K_1(B)\tensor Q) \oplus (K_1(A)\tensor Q) \otimes (K_0(B)\tensor Q) = K_1(A \otimes B)\tensor Q.$$
But isomorphisms preserve the number of free generators, thus using these two isomorphisms and definition of $\chi_{1}$ contributes into $\chi_{1}(A\tensor B)=\chi_{1}(A)\chi_{1}(B).$
	\end{proof}

\begin{theorem}(Glimm) \label{glimm}
Let $A$ and $B$ be complex UHF-algebras then $A \cong B$ if and only if $(K_{0}(A), [1_{A}]) \cong (K_{0}(B), [1_{B}]).$
\end{theorem}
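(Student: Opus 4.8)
The plan is to prove the forward implication by functoriality and the reverse by an Elliott-style intertwining of the defining inductive systems. The forward direction is immediate: if $\Phi\colon A \to B$ is a $*$-isomorphism, then $K_0(\Phi)$ is a group isomorphism $K_0(A) \to K_0(B)$, and since $\Phi$ is unital it carries $[1_A]$ to $[1_B]$; thus $(K_0(A),[1_A]) \cong (K_0(B),[1_B])$ as pointed groups.

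For the substantive direction, I would first recall the structure. Writing $A = \varinjlim(\M{k_n}, \iota_n)$ with unital connecting embeddings, so that $k_n \mid k_{n+1}$, continuity of $K_0$ gives $K_0(A) = \varinjlim(\Z, \times(k_{n+1}/k_n))$, which is canonically the subgroup of $\mathbb{Q}$ consisting of those rationals whose denominators divide some $k_n$, with $[1_A]$ corresponding to $1 \in \mathbb{Q}$. Likewise $B = \varinjlim(\M{l_m}, \kappa_m)$ determines such a subgroup. A group isomorphism between two subgroups of $\mathbb{Q}$ that sends $1 \mapsto 1$ is the restriction of multiplication by a rational number and hence is the identity, so the hypothesis says precisely that $A$ and $B$ give the \emph{same} subgroup of $\mathbb{Q}$; equivalently, every $k_n$ divides some $l_m$ and conversely (i.e.\ the two supernatural numbers coincide).

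The second ingredient is the elementary classification of maps between matrix algebras: a unital $*$-homomorphism $\M{a} \to \M{b}$ exists if and only if $a \mid b$, and any two such are unitarily equivalent, since such a map is determined up to unitary equivalence by the multiplicity with which it sends a minimal projection, and unitality forces this multiplicity to equal $b/a$. Using the divisibility relations above I would pass to subsequences so that $k_{n_1} \mid l_{m_1} \mid k_{n_2} \mid l_{m_2} \mid \cdots$, obtaining unital embeddings that interleave the two towers. The composite of two consecutive interleaving maps $\M{k_{n_j}} \to \M{l_{m_j}} \to \M{k_{n_{j+1}}}$ is a unital embedding, hence unitarily equivalent to the original connecting map of the $A$-tower; conjugating the interleaving maps by suitable unitaries makes the resulting triangles commute exactly, and symmetrically on the $B$ side. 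Assembling these conjugations inductively yields a commuting ladder whose two rows have limits $A$ and $B$, and the induced map on the limits is the desired $*$-isomorphism carrying $1_A$ to $1_B$.

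The main obstacle is the bookkeeping in this last step: each time one corrects a triangle by a unitary conjugation, one must ensure the correction is compatible with the conjugations already fixed at earlier stages, so that the entire zig-zag commutes on the nose and descends to a well-defined, necessarily injective and surjective, homomorphism of the inductive limits. Because the connecting maps are unique up to unitary equivalence, \emph{exact} commutation can be arranged, which is what distinguishes the UHF case from the general approximate-intertwining argument; verifying that the inductively chosen unitaries can be made consistent is the technical heart of the proof.
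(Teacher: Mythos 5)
Your proof is correct. Note that the paper does not actually prove this statement: it records Glimm's classification of UHF algebras as a known result and uses it as a black box (e.g.\ in Lemma~\ref{middle of uct}), so there is no in-paper argument to compare against. What you have written is the standard proof: the identification of $(K_0,[1])$ with a unital subgroup of $\mathbb{Q}$ determined by the supernatural number, the observation that a pointed isomorphism of such subgroups must be the identity (any homomorphism of subgroups of $\mathbb{Q}$ is multiplication by the image of $1$), the fact that unital $*$-homomorphisms $M_a\to M_b$ exist exactly when $a\mid b$ and are unique up to unitary conjugation, and the exact (not merely approximate) Elliott intertwining obtained by interleaving the two towers and correcting one triangle at a time. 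Your remark that each unitary correction only modifies maps at strictly later stages, so the inductively chosen conjugations are automatically consistent and the ladder commutes on the nose, is precisely the point that makes the UHF case elementary; the argument as you outline it is complete.
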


\begin{lemma} \label{middle of uct}
Let $U$ be the universal UHF-algebra, then $U \otimes U \cong U$.
\end{lemma}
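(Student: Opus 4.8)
The plan is to apply Glimm's classification theorem (\cref{glimm}), which reduces the claim to a statement purely about the pointed $K_0$-invariant. Since the universal UHF-algebra $U$ satisfies $K_0(U) = \mathbb{Q}$ and $K_1(U) = 0$, the whole problem collapses to computing $(K_0(U \otimes U), [1_{U \otimes U}])$ and exhibiting an isomorphism of pointed ordered groups with $(K_0(U), [1_U]) = (\mathbb{Q}, 1)$.

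First I would observe that $U \otimes U$ is again a complex UHF-algebra. This follows from the fact that $U$ is the inductive limit of matrix algebras $M_{k_i}(\mathbb{C})$, the tensor product of matrix algebras $M_m(\mathbb{C}) \otimes M_n(\mathbb{C}) \cong M_{mn}(\mathbb{C})$ is again a matrix algebra, and tensor products commute with inductive limits; hence $U \otimes U$ is itself an inductive limit of matrix algebras, i.e.\ a UHF-algebra. This step is routine and I would state it briefly. With this in hand, \cref{glimm} is applicable to the pair $U \otimes U$ and $U$.

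Next I would compute the $K$-theory of $U \otimes U$. Since $U$ is in the KTP class, \cref{UHF} (applied with $A = U$) gives directly that $K_*(U \otimes U) \cong K_*(U) \otimes \mathbb{Q}$; concretely $K_0(U \otimes U) \cong \mathbb{Q} \otimes_{\mathbb{Z}} \mathbb{Q} \cong \mathbb{Q}$ and $K_1(U \otimes U) = 0$. Alternatively one may invoke the KTP sequence of \cref{th:KPT} directly with $A = B = U$ and use that $K_0(U) = \mathbb{Q}$ is torsion-free, so all the $\mathrm{Tor}$ terms vanish. Either way, $K_0(U \otimes U) \cong \mathbb{Q}$ as an abelian group.

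The key remaining point — and the only place where genuine care is needed — is the order-unit, i.e.\ tracking the class of the unit $[1_{U \otimes U}]$ under the identification $K_0(U \otimes U) \cong \mathbb{Q}$. The unit of a tensor product satisfies $1_{U \otimes U} = 1_U \otimes 1_U$, and under the Künneth isomorphism $\alpha$ the class $[1_U \otimes 1_U]$ is carried to $[1_U] \otimes [1_U]$; since $[1_U] = 1 \in \mathbb{Q}$, this product is $1 \otimes 1$, which corresponds to $1$ under the canonical isomorphism $\mathbb{Q} \otimes_{\mathbb{Z}} \mathbb{Q} \cong \mathbb{Q}$. Thus $(K_0(U \otimes U), [1_{U \otimes U}]) \cong (\mathbb{Q}, 1) = (K_0(U), [1_U])$ as pointed groups (the order structure being the standard one on $\mathbb{Q}$, which is preserved since the supernatural number of $U \otimes U$ is again the one with all prime powers infinite). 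Applying \cref{glimm} then yields $U \otimes U \cong U$. I expect the main obstacle to be purely bookkeeping: ensuring that the scaling of the order unit really is the identity under the Künneth map, rather than any nontrivial rational multiple, since it is exactly the position of the unit that Glimm's invariant is sensitive to.
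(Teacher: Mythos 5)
Your proposal follows essentially the same route as the paper's own proof: compute $K_0(U\otimes U)\cong K_0(U)\otimes\mathbb{Q}\cong\mathbb{Q}$ via Lemma~\ref{UHF} and then invoke Glimm's classification (\cref{glimm}). You are in fact more careful than the paper, which silently assumes that $U\otimes U$ is again a UHF-algebra and does not track the class of the unit $[1_{U\otimes U}]$ --- both points that Glimm's pointed invariant genuinely requires and that your write-up correctly addresses.
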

\begin{proof}
By Lemma \eqref{UHF} we have
\begin{align}
  K_{0}(U) \otimes \mathbb{Q} \cong K_{0}(U \otimes U)
  \end{align}
  but since $K_{0}(U)= \mathbb{Q}$ and the tensor product of abelian groups is the ordinary tensor product over $\mathbb{Z}$ we have $ K_{0}(U \otimes U)= \mathbb{Q}$, now the classification of complex UHF-algebras \eqref{glimm} implies that $U \otimes U \cong U$.
\end{proof}

\begin{theorem}
Let $A$ and $B$ be complex C*-algebras and $A$ is in the KTP  class, then $\chi_{2}(A\tensor B)=\chi_{2}(A)\chi_{2}(B).$
\end{theorem}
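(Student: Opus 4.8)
The plan is to show multiplicativity of $\chi_2$ by reducing everything to the universal UHF-algebra $U$ and exploiting its tensor-idempotence, $U \otimes U \cong U$, established in \cref{middle of uct}. Recall that by \cref{UHF} we have the natural identification $K_*(A \otimes U) \cong K_*(A) \otimes \mathbb{Q}$, so $\chi_2(A) = \dim_{\mathbb{Q}} K_1(A \otimes U) - \dim_{\mathbb{Q}} K_0(A \otimes U)$, and the key point is that these K-groups are $\mathbb{Q}$-vector spaces, where $\chi_2$ is simply the difference of dimensions — an Euler-characteristic-type invariant.

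First I would rewrite $\chi_2(A \otimes B)$ using the definition: it is $\dim_{\mathbb{Q}} K_1(A \otimes B \otimes U) - \dim_{\mathbb{Q}} K_0(A \otimes B \otimes U)$. The trick is to insert a second copy of $U$ using idempotence. Since $U \otimes U \cong U$, nuclearity lets us write
\[
A \otimes B \otimes U \cong A \otimes B \otimes U \otimes U \cong (A \otimes U) \otimes (B \otimes U),
\]
up to the usual commutativity and associativity isomorphisms of the (unique, since $U$ is nuclear) tensor product. Applying $K_*$ to the right-hand side, I would invoke the KTP sequence of \cref{th:KPT} for the pair $(A \otimes U, B \otimes U)$. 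The crucial simplification is that both $K_*(A \otimes U)$ and $K_*(B \otimes U)$ are $\mathbb{Q}$-vector spaces by \cref{UHF}, hence torsion-free and in fact divisible; consequently all the $\mathrm{Tor}$ terms in the KTP sequences vanish, and the maps $\alpha$ become isomorphisms. This yields
\begin{align*}
K_0\big((A\otimes U)\otimes(B\otimes U)\big) &\cong \big(K_0(A\otimes U)\otimes K_0(B\otimes U)\big) \oplus \big(K_1(A\otimes U)\otimes K_1(B\otimes U)\big),\\
K_1\big((A\otimes U)\otimes(B\otimes U)\big) &\cong \big(K_0(A\otimes U)\otimes K_1(B\otimes U)\big) \oplus \big(K_1(A\otimes U)\otimes K_0(B\otimes U)\big),
\end{align*}
where the tensor products on the right are over $\mathbb{Q}$ (since tensoring two $\mathbb{Q}$-modules over $\mathbb{Z}$ agrees with tensoring over $\mathbb{Q}$).

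Next I would compute dimensions. Writing $n_A = \dim_{\mathbb{Q}} K_0(A\otimes U)$, $m_A = \dim_{\mathbb{Q}} K_1(A\otimes U)$, and similarly for $B$, the dimension of a tensor product of $\mathbb{Q}$-vector spaces is the product of dimensions, so $\chi_2(A\otimes B) = (m_A n_B + n_A m_B) - (n_A n_B + m_A m_B)$. Factoring gives $-(n_A - m_A)(n_B - m_B) = (m_A - n_A)(m_B - n_B) = \chi_2(A)\chi_2(B)$, which is exactly the claim. This is the same algebraic identity that drives the $\chi_1$ multiplicativity theorem, now transported to the $\mathbb{Q}$-dimension setting.

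The main obstacle, and the point deserving the most care, is justifying that the KTP sequence applies and that its error terms genuinely vanish. For the KTP theorem one needs $A\otimes U$ \emph{or} $B\otimes U$ to lie in the KTP (equivalently UCT) class; this is where the hypothesis that $A$ is in the KTP class is used, together with the fact that $U$ is in the KTP class (shown earlier) and that the KTP class is closed under tensor products, so $A\otimes U$ is in the KTP class. I would then confirm that the divisibility of $\mathbb{Q}$-modules forces $\mathrm{Tor}(\mathbb{Q}\text{-module}, -) = 0$, which makes the boundary maps $\beta$ zero and $\alpha$ an isomorphism. A secondary subtlety is the implicit finiteness of the $\mathbb{Q}$-dimensions: if any of these dimensions is infinite the formula must be read with care, so the cleanest statement restricts to the case where $K_*(A\otimes U)$ and $K_*(B\otimes U)$ are finite-dimensional over $\mathbb{Q}$; I would either add this hypothesis or note that $\chi_2$ is only defined when these dimensions are finite. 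Modulo this bookkeeping, the proof is a direct dimension count.
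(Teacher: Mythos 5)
Your proposal is correct and follows essentially the same route as the paper's proof: use the idempotence $U\otimes U\cong U$ to rewrite $(A\otimes B)\otimes U$ as $(A\otimes U)\otimes(B\otimes U)$, apply the KTP sequence to $A\otimes U$ (which lies in the KTP class by closure under tensor products), observe that the $\mathbb{Q}$-module structure of $K_*(\,\cdot\,\otimes U)$ kills the $\mathrm{Tor}$ terms, and conclude by the Euler-characteristic dimension count. Your added remark about requiring the $\mathbb{Q}$-dimensions to be finite is a point the paper leaves implicit, but it does not change the argument.
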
	

\begin{proof}
By Lemma \eqref{middle of uct} since $U \otimes U \cong U$, we have $(A \otimes U)\otimes(B \otimes U)\cong ( A \otimes B)\otimes U$. Since the KTP class is closed under tensor product, by hypothesis $A \otimes U$ is in the $KTP$ class. Therefore KTP theorem \eqref{KTP} holds for $A \otimes U$, which means that we have the following sequences
\begin{multline*}0 \rightarrow K_0(A \otimes U) \otimes K_0(B \otimes U) \oplus K_1(A \otimes U) \otimes K_1(B \otimes U) \rightarrow K_0((A \otimes B)\otimes U) \rightarrow\\ \mathrm{Tor}(K_0(A\otimes U),K_1(B \otimes U)) \oplus \mathrm{Tor}(K_1(A \otimes U),K_0(B \otimes U)) \rightarrow 0
 \end{multline*}
 and
 \begin{multline*}0 \rightarrow K_0(A \otimes U) \otimes K_1(B \otimes U) \oplus K_1(A \otimes U) \otimes K_0(B \otimes U) \rightarrow K_1((A \otimes B)\otimes U) \rightarrow\\ \mathrm{Tor}(K_0(A\otimes U),K_0(B \otimes U)) \oplus \mathrm{Tor}(K_1(A \otimes U),K_1(B \otimes U)) \rightarrow 0
 \end{multline*}
 but $K_*(A \otimes U)$ and $K_*(A \otimes U)$ are torsion free, which arises into two isomorphisms
 $$K_0(A\otimes U) \otimes K_0(B\otimes U) \oplus K_1(A\otimes U)\otimes K_1(B\otimes U) = K_0((A \otimes B)\otimes U),$$
 and
 $$K_0(A\otimes U) \otimes K_1(B\otimes U \oplus (K_1(A\otimes U)\otimes K_0(B\otimes U) = K_1((A \otimes B)\otimes U)$$
now by applying these two isomorphisms and definition of $\chi_{2}$ we get $\chi_{2}(A\tensor B)=\chi_{2}(A)\chi_{2}(B).$
\end{proof}

The following theorem shows that the map $Q$ defined through a F\o lner sequence on $Z^\infty$ is in fact a character, and this is one of our main results.
\begin{theorem} \label{th:countably.generated.case}
Suppose that $A$ and $B$ have countably generated K-theory and $A$ or $B$ is in the KTP class, then $Q(A\tensor B)=Q(A)Q(B).$
\end{theorem}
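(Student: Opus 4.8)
The plan is to reduce the multiplicative statement $Q(A\tensor B)=Q(A)Q(B)$ to a purely algebraic computation on the countably generated $K$-groups, exactly mirroring the finitely generated proof for $\chi_1$, but with the renormalized rank function $q$ replacing the ordinary rank. First I would invoke the KTP theorem (\cref{th:KPT}) for $A\tensor B$, which applies because $A$ or $B$ is in the KTP class; this gives the two six-term short exact sequences expressing $K_0(A\tensor B)$ and $K_1(A\tensor B)$ in terms of tensor products and $\mathrm{Tor}$ groups of the $K$-theory of $A$ and $B$.

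The key analytic input is that $q$ annihilates torsion groups (\cref{lemma:TorsionDoesntMatter}) and is additive on short exact sequences (\cref{lemma: q is additive}). The $\mathrm{Tor}$ groups appearing on the right of each KTP sequence are torsion, since $\mathrm{Tor}(M,N)$ of finitely (or countably) generated abelian groups is always a torsion group; hence $q$ vanishes on them. Applying $q$ and using additivity therefore collapses each sequence to
\begin{align*}
q(K_0(A\tensor B)) &= q\bigl(K_0(A)\tensor K_0(B)\bigr)+q\bigl(K_1(A)\tensor K_1(B)\bigr),\\
q(K_1(A\tensor B)) &= q\bigl(K_0(A)\tensor K_1(B)\bigr)+q\bigl(K_1(A)\tensor K_0(B)\bigr).
\end{align*}
Forming $Q(A\tensor B)=q(K_0(A\tensor B))-q(K_1(A\tensor B))$ and comparing with $Q(A)Q(B)=\bigl(q(K_0A)-q(K_1A)\bigr)\bigl(q(K_0B)-q(K_1B)\bigr)$, the desired identity reduces to the single multiplicativity claim $q(M\tensor N)=q(M)\,q(N)$ for countably generated abelian groups $M,N$, since the four cross terms then match up with the correct signs.

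The main obstacle, and the step deserving the most care, is precisely establishing $q(M\tensor N)=q(M)q(N)$. For finitely generated groups this was transparent because $q$ reduced to the ordinary rank, and rank is multiplicative on tensor products after killing torsion. In the countably generated case I would argue by reducing $M$ and $N$ modulo their torsion subgroups: since $M\tensor N$ modulo torsion agrees with $(M/\mathrm{tor})\tensor(N/\mathrm{tor})$ up to torsion, and $q$ ignores torsion by \cref{lemma:TorsionDoesntMatter}, it suffices to treat free groups, where $\mathbb{Z}^{\infty}\tensor\mathbb{Z}^{\infty}\cong\mathbb{Z}^{\infty}$. The delicate point is that the renormalized rank $q$ takes values in $[0,1]$ rather than counting generators, so multiplicativity is not automatic; I expect one must verify directly, using the explicit F\o lner sequence $\mathcal G_n$ of \cref{lem:4.18}, that the limiting ratio $\ln|\mathcal G_n\cap(M\tensor N)|/\ln|\mathcal G_n|$ factors as the product of the corresponding ratios for $M$ and $N$. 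This cardinality estimate on $\mathcal G_n$ is where the real work lies, and it is the analogue, for the infinitely generated setting, of the clean rank computation in \cref{lem:cardinality.of.F.1}.
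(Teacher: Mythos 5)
Your overall route is the same as the paper's: invoke the KTP sequences for $K_0(A\otimes B)$ and $K_1(A\otimes B)$, discard the $\mathrm{Tor}$ terms because $q$ annihilates countably generated torsion groups (\cref{lemma:TorsionDoesntMatter}), use additivity of $q$ to collapse each sequence, and match the four resulting terms against the expansion of $Q(A)Q(B)$. The sign bookkeeping in your reduction is correct, and your reduction to the single identity $q(M\otimes N)=q(M)\,q(N)$ is an accurate distillation of what the argument needs.

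The genuine gap is precisely the step you flag but defer: that identity is not a routine cardinality estimate, and as stated it fails. By the paper's own computations $q$ vanishes on every finitely generated group (e.g.\ $q(\mathbb{Z})=0$), yet $q(\mathbb{Z}^\infty\otimes\mathbb{Z})=q(\mathbb{Z}^\infty)=1\neq q(\mathbb{Z}^\infty)\,q(\mathbb{Z})$. There is also a prior well-definedness issue: the value of $q$ on $M\otimes N$ is computed from the doubly indexed generating family $(m_i\otimes n_j)$, and the limiting ratio against the F\o lner sets $\mathcal G_n$ depends on how that family is enumerated, so the estimate you postpone cannot even be set up without fixing such choices. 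To be fair, the paper's own proof has exactly the same lacuna: after disposing of the $\mathrm{Tor}$ terms it simply asserts, in a displayed chain of equalities, that the limit for $A\otimes B$ factors as the product of the limits for $A$ and for $B$, with no justification. So your proposal faithfully reproduces the paper's argument, including its weakest point; closing the proof would require either additional hypotheses on the $K$-groups or a modified definition of $q$ under which multiplicativity on tensor products can actually be verified.
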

	\begin{proof}
Recall from \cref{linear map}  that\begin{align*}
 Q(A\tensor B) = q( k_{0}(A\tensor B))-q(K_{1}(A\tensor B)).
\end{align*}
The K\"unneth tensor product theorem gives the exact sequences
	 \begin{multline*}0 \rightarrow K_0(A) \otimes K_0(B) \oplus K_1(A) \otimes K_1(B) \rightarrow K_0(A \otimes B) \rightarrow\\ \mathrm{Tor}(K_0(A),K_1(B)) \oplus \mathrm{Tor}(K_1(A),K_0(B)) \rightarrow 0,\
	 \end{multline*}
	 and
	 \begin{multline*}0 \rightarrow K_0(A) \otimes K_1(B) \oplus K_1(A) \otimes K_0(B) \rightarrow K_1(A \otimes B) \rightarrow\\ \mathrm{Tor}(K_0(A),K_0(B)) \oplus \mathrm{Tor}(K_1(A),K_1(B)) \rightarrow 0.\
	 \end{multline*}
In both of these sequences, the third term is an at most countably generated abelian group with only torsion elements, and, by \cref{lemma:TorsionDoesntMatter}, such a group makes a negligible contribution in the limit.
 Indeed, this means exactly that for both of these sequences, the set of generators of the free part has the same cardinality in the first and the second terms. The function $Q$ takes a limit of the difference of the number of free generators of the $K$-theory, and thus
  $ Q(A\tensor B)$ equals
 \begin{align*}
 &\lim_{n \rightarrow \infty} \frac{\ln(|(K_0(A) \otimes K_0(B) \oplus K_1(A) \otimes K_1(B))\cap \mathcal G_n|) -
                                                                                    \ln(|(K_0(A) \otimes K_1(B) \oplus K_1(A) \otimes K_0(B))\cap \mathcal G_n|) }{\ln(|\mathcal G_n|) }\\
 &=\lim_{n \rightarrow \infty} \frac{\ln(|(K_0(A)\cap \mathcal G_n|)-\ln(|(K_1(A)\cap \mathcal G_n|) }{\ln(|\mathcal G_n|)} \lim_{n \rightarrow \infty} \frac{\ln(|(K_0(B)\cap \mathcal G_n|)-\ln(|(K_1(B)\cap \mathcal G_n|) }{\ln(|\mathcal G_n|)}\\
 &=(q(K_{0}(A)-q(K_{1}(A)) (q(K_{0}(B)-q(K_{1}(B))\\
 &=Q(A)Q(B).
\end{align*}
	\end{proof}
\begin{theorem}
For a C*-algebra $A$ with K-groups $K_{0}(A)=\mathbb{Q}$ and $K_{1}(A)=0$, the character $Q (A)$ of $A$ is zero.
\end{theorem}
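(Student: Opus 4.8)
The plan is to reduce the statement to a computation of the renormalized rank $q$ on $\mathbb{Q}$, and then to exploit the fact that $\mathbb{Q}$ has torsion-free rank one, so that only the two established vanishing principles are needed: vanishing of $q$ on (finitely generated) free parts of bounded rank and vanishing of $q$ on torsion groups. First I would unwind the definition of the character: $Q(A)=q(K_0(A))-q(K_1(A))=q(\mathbb{Q})-q(\{0\})$. The trivial group is countably generated and torsion, so \cref{lemma:TorsionDoesntMatter} gives $q(\{0\})=0$ (equivalently, $|\mathcal G_n\cap\{0\}|=1$ forces the defining limit to vanish). Hence the whole problem collapses to showing $q(\mathbb{Q})=0$.

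The key step is to split off the free rank by applying the short exact sequence of countably generated abelian groups
\[ 0 \longrightarrow \mathbb{Z} \longrightarrow \mathbb{Q} \longrightarrow \mathbb{Q}/\mathbb{Z} \longrightarrow 0, \]
where $\mathbb{Z}\hookrightarrow\mathbb{Q}$ is the usual inclusion. Since $q$ is additive on short exact sequences by \cref{lemma: q is additive}, this yields $q(\mathbb{Q})=q(\mathbb{Z})+q(\mathbb{Q}/\mathbb{Z})$, and I would then evaluate the two summands independently. On one hand $\mathbb{Z}=\mathbb{Z}^1$ is finitely generated and $q(\mathbb{Z})=0$; this is the $n=1$ instance of the computation underlying the Example with $K_0(A)=\mathbb{Z}^n$, where the induced F\o lner sets satisfy $|\mathcal G_n\cap\mathbb{Z}|=2N_n+1$ while $|\mathcal G_n|=(2N_n+1)^n$, so the defining ratio equals $1/n\to 0$. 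On the other hand every element of $\mathbb{Q}/\mathbb{Z}$ has finite order, so $\mathbb{Q}/\mathbb{Z}$ is a countably generated torsion group and \cref{lemma:TorsionDoesntMatter} gives $q(\mathbb{Q}/\mathbb{Z})=0$. Combining, $q(\mathbb{Q})=0+0=0$, and therefore $Q(A)=0$.

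I expect the main obstacle to be purely bookkeeping: one must check that the cited lemmas apply \emph{verbatim}, i.e.\ that the additivity lemma (proved for short exact sequences of countably generated groups under the identification of the induced F\o lner sequences) is compatible with a fixed generating set of $\mathbb{Q}$, for instance the generators $1/k!$, and with the maps $\mathbb{Z}\hookrightarrow\mathbb{Q}$ and $\mathbb{Q}\to\mathbb{Q}/\mathbb{Z}$. As a robustness check that bypasses the splitting altogether, I would also verify the conclusion by a direct estimate: presenting $\mathbb{Q}$ as the quotient of $\mathbb{Z}^\infty$ via $e_k\mapsto 1/k!$, every element of $\mathcal G_n\cap\mathbb{Q}$ lies in $\tfrac{1}{n!}\mathbb{Z}\cap[-eN_n,eN_n]$, so $|\mathcal G_n\cap\mathbb{Q}|\le 2eN_n\,n!+1$; taking logarithms and using $\ln(n!)\sim n\ln n$ together with the rapid growth of $N_n$ shows $\ln|\mathcal G_n\cap\mathbb{Q}|/\ln|\mathcal G_n|\to 0$. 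This second route is more hands-on, but it makes transparent that the torsion quotient $\mathbb{Q}/\mathbb{Z}$ is precisely what prevents the rank-one free part of $\mathbb{Q}$ from registering in the renormalized limit, consistent with the principle that $Q$ is nontrivial only when the ordinary rank is infinite.
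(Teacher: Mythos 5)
Your proof is correct and follows essentially the same route as the paper's: reduce to $q(\mathbb{Q})$, split via the short exact sequence $0 \to \mathbb{Z} \to \mathbb{Q} \to \mathbb{Q}/\mathbb{Z} \to 0$, and invoke additivity of $q$ together with the vanishing of $q$ on $\mathbb{Z}$ and on the torsion group $\mathbb{Q}/\mathbb{Z}$. Your added justification for $q(\mathbb{Z})=0$ and the direct estimate via the presentation $e_k \mapsto 1/k!$ go beyond what the paper records (it simply asserts $q(\mathbb{Z})=0$ is clear), but the argument is the same.
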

\begin{proof}
According to Lemma \eqref{lemma: q is additive}, we know that $q$ is additive on a short exact sequence. By applying Lemma \eqref{function} and using the six-term cyclic exact sequence, we obtain $Q(A) = q(K_{0}(A)) - q(K_{1}(A))$. Since we assume that $K_{0}(A) = \mathbb{Q}$ and $K_{1}(A) = 0$, we have $Q(A) = q(\mathbb{Q})$.
Consider the short exact sequence
\[ 0 \longrightarrow \mathbb{Z} \rightarrow \mathbb{Q} \rightarrow \mathbb{Q}/\mathbb{Z} \longrightarrow 0. \]
Using the additivity of $q$, we have $q(\mathbb{Q}) = q(\mathbb{Z}) + q(\mathbb{Q}/\mathbb{Z})$. Since $\mathbb{Q}/\mathbb{Z}$ is a countably generated torsion group , we can deduce that, by Lemma \eqref{lemma:TorsionDoesntMatter}, $q(\mathbb{Q}/\mathbb{Z}) = 0$. Furthermore, it is clear that $q(\mathbb{Z}) = 0$.
Therefore, we have $Q(A) = q(\mathbb{Q}) = q(\mathbb{Z}) + q(\mathbb{Q}/\mathbb{Z}) = 0 + 0 = 0$, which completes the proof.
\end{proof}
We have thus exhibited a selection of additive and multiplicative maps that factor through K-theory. It is an open question whether there exist such maps not factoring through K-theory.

Naturally, the kernel of the above character provides us with an ideal of the Grothendieck ring that does not obviously come from an idempotent.

%
%
	

\section{Idempotents}
	
	For each idempotent $j=j\tensor j,$ there exists a corresponding ring homomorphism of a Grothendieck ring into a ring ideal, given by $x\mapsto x \tensor j.$ (Proposition \ref{prop:ideal} shows that however there are interesting ideals that are not of this form.)  Because such maps can give a lot of algebraic information on the ring,  it appears interesting to have some information on known idempotents. The two best known such idempotents are probably $K$, the compact operators, and $Z,$ the Jiang-Su algebra. From a historical point of view, $K$, the compact operators, is one of the very first operator algebras to be studied. Eventually, in the classification program, it was observed that in a given classifiable class of C*-algebras, such as, say, the AF algebras, the projectionless simple algebras, or the purely infinite simple algebras, it seems interesting to single out the algebras having the same invariants as the complex numbers, $\C.$ In most cases, such algebras turned out to be self-absorbing, in the sense that, for example, $Z\tensor Z$ is isomorphic to $Z.$ In fact, these algebras have a stronger property of being strongly self-absorbing. At this point, we digress for a moment to give a very short introduction to the classification program. Classification theory tends to focus on simple C*-algebras. The main idea is that inside a suitably defined class of C*-algebras, some version of K-theory may provide a complete invariant. Finding such a class can be difficult, as demonstrated by the TAF algebras. From our viewpoint, however, we are interested mostly in the tensor idempotents found as a byproduct of this program. Our viewpoint is not, however, a complete departure from the classification theory point of view, because from the viewpoint of that theory, it occurs several times that the C*-algebras belonging to a ring ideal generated by a properly chosen  idempotent have good behaviour from the classification point of view \cite{PrimesInClassification}.
	The list of special idempotents that have this kind of connection with the classification program is: $Z$, the Jiang-Su algebra;
 $O_2$ and $O_\infty,$ the Cuntz algebras; UHF algebras with all primes of infinite multiplicity; and of course $\C$ the algebra of complex numbers. Furthermore, it is conjectured that the projectionless Jacelon--Razak algebra, $R,$ is an idempotent. On the other hand, the simple and very classical idempotent $K,$ the compact operators, though it is very important as a basic example of an infinite-dimensional C*-algebra and an idempotent, does not really belong to the above list. Additionally, there is a very large class of idempotents defined by infinite tensor products.  In fact, the UHF algebras are already of this general form, so one can see that these classes mentioned are not disjoint. It is shown in \cite{blackadar1977} that infinite tensor products of separable algebras are separable, and it can be seen that they are nuclear if the tensor factors are nuclear. Thus, the infinite tensor product $\bigotimes_1^\infty A$ is clearly an idempotent. We will denote elements of this large class of idempotents by $A_\infty.$ Under mild technical conditions \cite{InfiniteTensorProductsAreZstable}, they absorb $Z,$ so that usually we have $Z\tensor A_\infty = A_\infty.$ An idempotent need not be simple as a C*-algebra. An especially interesting example of an idempotent that is not simple as a C*-algebra is $C([0,1])_\infty,$ in other words, $C(X)$ where $X$ is the Tychonoff cube of weight $\aleph_0.$
 We remark that in the category of C*-algebras the answer to Ulam's problem is negative: in other words, it is not true that $A\tensor A \isom B\tensor B$ implies that $A$ is isomorphic to $B.$ There is a somewhat involved counterexample in the unital and commutative case already \cite{NegativeUlam}.
	
	There exist several classes, loosely speaking, of idempotents. One can mention the unital, strongly self-absorbing idempotents. It has been shown  \cite{StronglySelfAbsAreZstable,StronglySelfAbsAreZstable0} that all so-called strongly self-absorbing simple idempotents have the property of absorbing $Z,$ meaning that $W\tensor Z \isom W$ for such unital idempotents $W.$ Kirchberg has shown that $O_2$ absorbs all the known unital simple separable and nuclear idempotents \cite{KirchToAppear}, meaning that for such idempotents $W\tensor O_2\isom O_2.$ 	In some cases, though, tensoring two idempotents simply gives a new idempotent, as is the case with nearly every tensor product with $K,$ for example. The complex numbers, $\C,$ regarded as a C*-algebra, certainly are an idempotent, and also provide a multiplicative unit for a Grothendieck ring that they are in.
	The following Hasse diagram summarizes the known relationships between simple idempotents:
	\begin{center}
\begin{tikzpicture}[yscale=0.5,xscale=1.5]
  \node (Oinf) at (-3.5,2) {$O_\infty$};
  \node (Otwo) at (-2,4) {$O_2$};
  \node (Ainf) at (-2,2) {simple$\cap A_\infty$}; \node (inclusion) at (-0.9,2) {$\supset$};
  \node (Uinf) at (0,2) {$UHF_\infty$};
  \node (J) at (2,2) {$R$};
  \node (Z) at (0,-1.5) {$Z$};
	\node (C) at (0,-3.5) {$\C$};
	\node (K) at (2,-1) {$K$};
  \draw (Oinf) -- (Otwo); \draw (C) -- (K); \draw (Z) -- (C); \draw (Z) to (Oinf);
	\draw (Z) -- (Otwo); \draw [gray] (Z) to (Ainf); \draw (Z) to (Uinf); \draw (Z) to (J);
	\draw (Otwo) to (Ainf); \draw (Otwo) to (Uinf);
\end{tikzpicture}\end{center}

The trivial algebra  $\{0\}$ is simple and idempotent but is omitted from the diagram. This concludes our discussion of special idempotents. If we turn to the general case of the collection of all idempotents of a Grothendieck ring, we note that the idempotents of a commutative ring with unit form a Boolean lattice. (The unit here is given by the class of $\C$, the algebra of complex numbers.) By
Stone's famous theorem these idempotents are thus isomorphic to the collection of closed and open sets of some topological space, a so-called Stonean space \cite{OnStones}. Stonean spaces are very well behaved from the topological point of view, so in a sense this provides a structure theorem for the idempotents of a Grothendieck ring.
\section{Computing the Grothendieck ring of C*-algebras}

We now  compute the Grothendieck ring of some classes of C*-algebras. This is only sometimes feasible. The main cases we look at are those with finitely many ideals, and those with finitely generated K-theory.
We begin with reminding the reader that the most general case, namely the ring generated by all C*-algebras, is trivial. This was shown at the start of Section 2. If we wish to consider other classes, the first issue is closure under the operations needed in the Grothendieck construction, namely isomorphism, finite direct sum, and formation of extensions. Secondly, the KTP property seemed important for being able to construct characters.
One would hope that the KTP property would pass to ideals and to quotients, as did the property of being an AF algebra in our first examples, but this seems unclear. Precisely because of such issues, one prefers to work with the bootstrap class, which has the property, by definition, of having a 2-of-3 property under the formation of extensions, and this is sufficient for a reasonable theory. Thus, if a C*-algebra has the bootstrap property, then forming quotients by ideals with the bootstrap property will ensure that the quotient has the bootstrap property.

\subsection{The Grothendieck ring of separable nuclear C*-algebras with finitely generated K-theory.}
The triviality argument now goes through just in case that the algebra $A$ has $K_* (A)=\{e\},$ for in this case the infinite direct sum  $D:=\bigoplus^\infty_1 A_i $ has trivial, thus finitely generated, $K$-theory. This is because $K$-theory respects both finite direct sums and countable inductive limits, and thus respects infinite direct sums. Separability of the algebra $D$ follows in essentially the same way: a countable  direct sum is a limit of finite direct sums, and each of the finite direct sums is separable (or more directly, we can use an approximation result together with Cantor's result that a countable union of countable sets is countable). Nuclearity is an approximation property that is preserved under infinite direct sums. Thus $A\oplus D$ is isomorphic to $D$, and this implies that algebras with vanishing K-theory are zero in this Grothendieck ring.
				
In this ring, we do not know if the character $\chi$ is multiplicative, because our previous proof of this depended on the KTP. The KTP conjecture would however imply that this character is multiplicative.

	\subsection{The Grothendieck ring of separable, nuclear, C*-algebras with finitely many ideals.}
	
	In this ring, the fundamental triviality argument is indeed trivial, because infinite direct sums or products have infinitely many ideals, given by the sequences which are zero in the $n^{th}$ place. The following lemma shows that the class is closed under the minimal tensor product.
\begin{lemma}
 The class of C*-algebras with finitely many ideals is closed under the tensor product $\otimes_{min}.$
\end{lemma}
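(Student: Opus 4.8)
The plan is to reduce the statement to the behaviour of simple C*-algebras under $\mintensor$, by means of composition series. First I would record the standard dictionary: a C*-algebra has finitely many closed ideals if and only if its lattice of closed ideals is finite, equivalently its primitive ideal space is a finite $T_0$-space. Starting from an algebra $A$ with finitely many ideals, I would choose a maximal chain $0=I_0\subsetneq I_1\subsetneq\cdots\subsetneq I_n=A$ in this finite lattice. The point that makes the subquotients $I_k/I_{k-1}$ simple is the C*-algebraic fact that a closed ideal of a closed ideal is again a closed ideal of the ambient algebra (proved using an approximate unit of $I_k$); hence any nontrivial ideal of $I_k/I_{k-1}$ would lift to a closed ideal of $A$ strictly between $I_{k-1}$ and $I_k$, contradicting maximality. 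Thus both $A$ and $B$ acquire finite composition series with simple subquotients $A_k:=I_k/I_{k-1}$ and $B_l:=J_l/J_{l-1}$.

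Next I would tensor these series. For the minimal tensor product the inclusion of an ideal $I\hookrightarrow A$ induces an inclusion $I\mintensor B\hookrightarrow A\mintensor B$ as a closed ideal, and, crucially, the quotient is $(A/I)\mintensor B$ whenever one tensor factor is exact. In the setting of interest the algebras are nuclear, hence exact, so this holds (note that ideals and quotients of nuclear algebras are again nuclear, so each $A_k$ is exact). Consequently, tensoring the filtration of $A$ by $B$ and then refining each piece $A_k\mintensor B$ by the filtration of $B$ produces a finite filtration of $A\mintensor B$ whose successive subquotients are the algebras $A_k\mintensor B_l$. Since $A_k$ and $B_l$ are simple, Takesaki's theorem that the minimal tensor product of simple C*-algebras is simple shows each subquotient is simple. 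Hence $A\mintensor B$ has a finite composition series with simple subquotients.

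It then remains to prove the converse implication: a C*-algebra $C$ with a finite composition series by simple subquotients has only finitely many ideals. I would argue by induction on the length, writing $J$ for the penultimate term (finitely many ideals by induction) with $C/J$ simple. For an arbitrary closed ideal $K\triangleleft C$, simplicity of $C/J$ forces either $K\subseteq J$ or $K+J=C$. In the first case there are only finitely many possibilities, since such $K$ are ideals of $J$. In the second case I would pass to $C/L$ with $L:=K\cap J$ and observe that $C/L$ is the internal direct sum of the ideals $K/L$ and $J/L$, so that $K/L=\mathrm{Ann}_{C/L}(J/L)$ is determined by $L$ alone; as $L$ ranges over the finitely many ideals of $J$, this again yields only finitely many $K$. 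Thus $C$ has finitely many ideals.

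The main obstacle is the identification of the subquotients of the tensored filtration as $A_k\mintensor B_l$: this rests on the exactness of the minimal tensor product, which can fail for general factors and is precisely why the nuclear (hence exact) hypothesis is the natural one here. The other essential input, Takesaki's simplicity theorem for $\mintensor$ of simple algebras, is unconditional and carries the multiplicative heart of the argument; the remaining steps are bookkeeping about finite ideal lattices.
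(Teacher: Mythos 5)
Your argument is correct, but it is a genuinely different proof from the one in the paper. The paper disposes of the lemma in one line: it invokes the canonical map $\mathrm{Prim}\,A\times\mathrm{Prim}\,B\to\mathrm{Prim}(A\otimes_{min}B)$ with dense range (Blackadar II.9.6.7) and concludes that a finite dense subset forces finitely many primitive, hence finitely many, ideals; no nuclearity or exactness enters. You instead pass to maximal composition series with simple subquotients, tensor and refine the filtrations, and quote Takesaki's theorem that $\otimes_{min}$ of simple C*-algebras is simple, closing the loop with an induction showing that a finite composition series with simple subquotients forces a finite ideal lattice. The trade-offs: your route needs one tensor factor to be exact in order to identify the subquotients of the tensored filtration as $A_k\otimes_{min}B_l$ — you flag this honestly, and it is harmless in the paper's actual setting (the subsection concerns separable \emph{nuclear} algebras), but it means you prove a formally weaker statement than the lemma as written, whereas the paper's argument makes no such assumption. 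In exchange you get more: an explicit composition series for $A\otimes_{min}B$ with simple subquotients $A_k\otimes_{min}B_l$, which is finer information than mere finiteness of the ideal lattice. Your proof also sidesteps a delicate point in the paper's argument: a finite dense subset of a topological space does not by itself force the space to be finite (a single point is dense in $\mathrm{Prim}\,C$ for any primitive C*-algebra $C$, e.g.\ the Toeplitz algebra, whose primitive ideal space is infinite), so the paper's conclusion really rests on the stronger ideal-structure content behind the cited result rather than on density alone; your structural argument does not have this issue.
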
 \begin{proof} Quite generally, there exists a map from $Prim A \times Prim B$ to $Prim A\otimes_{min} B$ that has dense range \cite[II.9.6.7]{blackadarbook2}. But if a finite set is dense, in $Prim A\otimes_{min} B,$ then it must be that there are finitely many primitive ideals, and hence finitely many ideals, in $A\otimes_{min} B.$
 \end{proof}
Proposition \ref{prop:ideal} shows that UCT class algebras with vanishing $K$-theory form an ideal, $I.$ The proof of the UCT shows that this ideal can be alternatively characterized as the ideal generated by the separable nuclear $KK$-contractible algebras.
	
\subsection{Computing the Grothendieck ring of abelian C*-algebras}
The Gelfand transform establishes an equivalence between abelian C*-algebras and completely regular locally compact topological spaces. As a result, the Grothendieck ring generated by abelian C*-algebras is equivalent to a ring generated by these topological spaces, using disjoint union and Cartesian product as operations. The equivalence relation in the context of the Grothendieck ring incorporates two key properties. Firstly, it treats homeomorphic spaces as equal. Secondly, if $A$ represents an ideal in $B$ with a quotient space $C$, then the relation $[B]=[A]\oplus [C]$ holds. This implies that a ideal can be understood as a direct summand within the ring structure.

\section{real C*-algebras}
 A real C*-algebra $A$ is a real Banach *-algebra which satisfies the C*-equation $\|  a^{*}a\|=\| a\|^{2}$ as well as the axiom that $1+a^{*}a$ is invertible in the unitization $A^{+}$ for every $a \in A$. This is equivalent to saying that $A$ is *-isometrically isomorphic to a norm-closed adjoint-closed algebra of operators on a real Hilbert space \cite{Palmer}. Note that every complex C*-algebra can be considered as a real C*-algebra by forgetting the complex structure.

If $A$ is any real C*-algebra, it has a unique complexification $A_{\mathbb{C}}= A \otimes _{\mathbb{R}} \mathbb{C}$ which is a complex C*-algebra. Once we have the complexification of a real C*-algebra $A$, we can recover $A$ using an involutive *- anti-automorphism on $A_{\mathbb{C}}$. For example, consider the involutive *- anti-automorphism
$$
\alpha : A_{\mathbb{C}} \ni x+iy \mapsto x^{*} +i y^{*} \in A_{\mathbb{C}}
$$
then $A = \{ a \in A_{\mathbb{C}} \ | \  \alpha(a)=a^{*} \}$ is a real C*-algebra.

 \begin{definition}
 A complex C*-algebra $A$ has a real structure if it carries an involutive *- anti-automorphism.
 \end{definition}
Therefore, by the definition of real structures if we are given a complex C*-algebra $A$ and an involutive *- anti-automorphism on it, then we can recover a real C*-algebra the same as above which is called the real structure and its complexification is the complex C*-algebra $A$. However, if $A$ is a complex C*-algebra, in general, it does not admit a *- anti-automorphism . In other words, if $A$ is a complex C*-algebra then it is not the complexification of a real C*-algebra in general. For example, Connes \cite{Connes} showed the existence of factors acting on a seperable Hilbert space which is not anti isomorphic to itself, therefore it cannot admit any real structure.

Now let $A$ be a complex C*-algebra that admits a real structure, then in general there might be more than one real structure. For example, consider $M_{2}(\mathbb{C})$ as a complex C*-algebra. There are two real structures on  $M_{2}(\mathbb{C})$ which are not isomorphic. One of the real structures can be recovered by using the involutive anti-automorphism $\alpha_{1}$ which is the transpose operation on $M_{2}(\mathbb{C})$, then we have $\{a \in   M_{2}(\mathbb{C}) : \alpha_{1}(a)=a^{*}\}=M_{2}(\mathbb{R})$. Now consider the involutive anti-automorphism $\alpha_{2}$ defined by
 \[
   \alpha_{2} \left[ {\begin{array}{cc}
   a & b \\
   c & d \\
  \end{array} } \right]=
  \left[ {\begin{array}{cc}
   d & -b \\
   -c & a \\
  \end{array} } \right]
\]
then the real structure $\{a \in   M_{2}(\mathbb{C}) : \alpha_{2}(a)=a^{*}\}$ is $\mathbb{H}$. Now for showing that $M_{2}(\mathbb{R})$ and $\mathbb{H}$ are not isomorphic as real C*-algebras we use Giordano's classification of real AF-algebras \cite{Gior}, indeed, he has obtained an invariant which is equivalent to one by Goodearl and Handelman \cite{handel}. Giordano's invariant contains $KO_{0}$, $KO_{2}$ and $KO_{4}$, therefore since $KO_{2}(M_{2}(\mathbb{R})) \neq KO_{2}(\mathbb{H})$ we conclude that $M_{2}(\mathbb{R})$ and $\mathbb{H}$ are two distinct real structures of the complex C*-algebra $M_{2}(\mathbb{C})$.

On the other hand, there are some cases for which there exists a unique real structure. For example, the hyperfinite $II_{1}$ factor $R$ has a unique real structure (\cite{Erling}, \cite{G}, \cite{Gior}), and also it is shown that the injective factor $II_{\infty}$ has a unique real structure \cite{Gior}.
Now consider separable nuclear purely infinite simple C*-algebras which satisfy the Universal Coefficient Theorem in both real and complex cases. There are classification theorems for these kind of C*-algebras.
\begin{theorem} \cite{philips} \label{comp_classification}
Let $A$ and $B$ be separable nuclear purely infinite simple complex C*-algebras which are in the UCT class. If $K_{*}(A)\cong K_{*}(B)$, then $A \cong B$.
\end{theorem}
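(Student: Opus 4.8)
This is the Kirchberg--Phillips classification theorem, so the plan is to assemble the standard classification machinery rather than to prove it from scratch; I would organize the argument around a $KK$-theoretic intertwining. The first step is to pass from the K-theory isomorphism to a $KK$-equivalence. Since $A$ and $B$ lie in the UCT class, the universal coefficient sequence
\[ 0 \to \mathrm{Ext}(K_*(A),K_*(B)) \to KK_*(A,B) \to \mathrm{Hom}(K_*(A),K_*(B)) \to 0 \]
shows that the given isomorphism $\alpha\colon K_*(A)\to K_*(B)$ lifts to a class $x\in KK_0(A,B)$, and $\alpha^{-1}$ lifts to $y\in KK_0(B,A)$. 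A naturality argument, using that the Kasparov products $xy$ and $yx$ induce the identity on K-theory and that the UCT sequence is natural, together with a five-lemma, then shows that $x$ is invertible, so that $A$ and $B$ are $KK$-equivalent via a class inducing $\alpha$.

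The second step is the heart of the matter, and is where pure infiniteness, simplicity, nuclearity and separability all enter: the existence and uniqueness theorems for realizing $KK$-classes by genuine $*$-homomorphisms. One uses Kirchberg's absorption results, namely that every Kirchberg algebra absorbs $O_\infty$, that is $A\otimes O_\infty\cong A$, and, after stabilizing by the compacts $K$, establishes two statements: (existence) every element of $KK(A,B)$ is induced by a $*$-homomorphism $A\otimes K\to B\otimes K$; and (uniqueness) two such $*$-homomorphisms inducing the same $KK$-class are approximately unitarily equivalent. Realizing the invertible class $x$ and its inverse $y$ by homomorphisms $\phi$ and $\psi$, the uniqueness theorem shows that $\psi\circ\phi$ and $\phi\circ\psi$ are approximately unitarily equivalent to the identity maps, because they induce $yx=1$ and $xy=1$ in $KK$.

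The final step is Elliott's two-sided approximate intertwining argument: once $\phi\circ\psi$ and $\psi\circ\phi$ are approximately unitarily equivalent to the respective identities, one inductively constructs correcting unitaries and passes to a limit to obtain a genuine $*$-isomorphism, yielding $A\cong B$. In the unital case one must additionally arrange that the homomorphisms preserve the class of the unit, which amounts to carrying $[1_A]\mapsto[1_B]$ as part of the K-theoretic data; in the use made of this theorem later in the paper one only needs the case of trivial K-theory, where the algebra is forced to be $O_2$ and no unit bookkeeping is required. The main obstacle is unquestionably the existence and uniqueness theorems of the second step: they rest on Kirchberg's deep $O_\infty$- and $O_2$-absorption theorems and on delicate approximate unitary equivalence estimates for full homomorphisms into purely infinite simple algebras, and these constitute the technically hardest part of the Kirchberg--Phillips program.
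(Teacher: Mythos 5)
The paper does not prove this theorem; it is imported verbatim from Phillips (the cited reference), so there is no internal argument to compare against. Your outline --- lifting the $K$-theoretic isomorphism to a $KK$-equivalence via the UCT, realizing it by $*$-homomorphisms through Kirchberg's existence and uniqueness theorems, and closing with an Elliott approximate intertwining --- is a faithful summary of the standard Kirchberg--Phillips proof in that reference, and your remark that the application in this paper only needs the trivial-$K$-theory case (forcing the algebra to be $O_2$) is accurate.
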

Moreover, there is a classification for the real case by Boersema, Ruiz and Stacy as follows
\begin{theorem} \cite{Stacy}\label{real_classification}
Let $A$ and $B$ be separable nuclear purely infinite simple real C*-algebras which satisfy the Universal Coefficient Theorem such that $A_{\mathbb{C}}$ and $B_{\mathbb{C}}$ are also simple. Then $A$ and $B$ are isomorphic if and only if $K^{\text{CRT}}(A) \cong K^{\text{CRT}}(B)$.
\end{theorem}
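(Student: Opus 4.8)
The plan is to follow the architecture of the complex Kirchberg--Phillips classification (the complex case being \cref{comp_classification}), transported to the real setting by means of Boersema's united K-theory $K^{\text{CRT}}$, which simultaneously records the real groups $KO_*$, the complex groups $KU_*$, and the self-conjugate groups $KT_*$ together with the natural operations (complexification, realification, the Bott maps, and the conjugation) relating them. The forward implication is the easy half: an isomorphism $A \cong B$ of real C*-algebras is functorial for each constituent K-theory functor and for all the natural transformations among them, so it induces an isomorphism of CRT-modules $K^{\text{CRT}}(A) \cong K^{\text{CRT}}(B)$. The substance of the theorem lies entirely in the converse.

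For the converse, I would first establish (or invoke) a \emph{real universal coefficient theorem}: a natural short exact sequence
\[ 0 \to \operatorname{Ext}_{\text{CRT}}(K^{\text{CRT}}(A), K^{\text{CRT}}(B)) \to KK^{\text{real}}(A,B) \to \operatorname{Hom}_{\text{CRT}}(K^{\text{CRT}}(A), K^{\text{CRT}}(B)) \to 0, \]
where $\operatorname{Hom}$ and $\operatorname{Ext}$ are computed in the abelian category of CRT-modules. Granting this, a CRT-module isomorphism $K^{\text{CRT}}(A) \cong K^{\text{CRT}}(B)$ lifts, under the right-hand map, to an element of $KK^{\text{real}}(A,B)$ that is invertible; a diagram chase entirely parallel to the complex UCT argument upgrades the abstract module isomorphism to a genuine \emph{real KK-equivalence} $A \sim_{KK} B$. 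The hypothesis that $A_{\mathbb{C}}$ and $B_{\mathbb{C}}$ are simple enters already here: it guarantees that the complexifications are themselves complex Kirchberg algebras, so the complex classification is available as an anchor and the CRT-structure is precisely the extra data that pins down the real involution.

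The final and deepest step is a \emph{real Kirchberg--Phillips uniqueness theorem}: two separable nuclear purely infinite simple real C*-algebras with simple complexification that are real-KK-equivalent are in fact isomorphic. Here I would adapt Phillips' proof, resting on a real analog of $\mathcal{O}_\infty$-absorption, a real $\mathcal{O}_2$-embedding theorem, and an Elliott-type approximate intertwining carried out with real $*$-homomorphisms and real unitaries. I expect this step to be the main obstacle, for two intertwined reasons. First, the approximate-unitary-equivalence machinery must be redone over $\mathbb{R}$, and the exponential and rotation tricks that supply connecting unitaries in the complex case are more delicate when one is restricted to the real unitary group; one typically controls them by complexifying, proving the relevant statement for $A_{\mathbb{C}}$ via \cref{comp_classification}, and then descending along the real structure, which is exactly why simplicity of the complexification is assumed. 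Second, one must check that pure infiniteness, simplicity, and nuclearity behave well under the real constructions, so the intermediate algebras produced during the intertwining remain in the class. Combining this uniqueness theorem with the matching existence theorem (realizing every CRT-module morphism by a real $*$-homomorphism up to the relevant equivalence) and with the real KK-equivalence obtained above then yields $A \cong B$, completing the proof.
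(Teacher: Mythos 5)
The paper does not actually prove this statement: it is imported from Boersema--Ruiz--Stacey \cite{Stacy} as a known classification theorem (note the citation attached to the theorem header), so there is no internal proof to compare yours against. That said, your outline is a reasonable reconstruction of the strategy used in the cited reference: the forward implication by functoriality of $K^{\text{CRT}}$, and the converse by (i) a universal coefficient theorem for united KK-theory, converting a CRT-module isomorphism into a real $KK$-equivalence, and (ii) a real Kirchberg--Phillips existence/uniqueness theorem promoting that $KK$-equivalence to an isomorphism, with the complex classification (\cref{comp_classification}) available as an anchor via the complexifications.

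Two caveats. First, as written your argument is a roadmap rather than a proof: both pillars --- the real UCT and the real uniqueness/existence theorems --- are precisely the content of \cite{Boersema}, \cite{BoersemaUHF}, and \cite{Stacy}, and you ``establish (or invoke)'' them without supplying their substance. In particular, the real UCT is not a formal consequence of working ``in the abelian category of CRT-modules'': the essential input is that acyclic CRT-modules (those satisfying the exact sequences interlocking $KO$, $KU$, and $KT$) have projective dimension at most one in that category, which is exactly what the present paper alludes to when it says united K-theory ``solves the problem of projective dimension one.'' Without that, the two-term $\operatorname{Ext}$--$\operatorname{Hom}$ sequence you write down has no reason to exist, and indeed the analogous sequence fails if one uses $KO_*$ alone. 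Second, a misplacement of hypotheses: simplicity of $A_{\mathbb{C}}$ and $B_{\mathbb{C}}$ is not what makes the UCT/KK-equivalence step work (that needs only a bootstrap-type hypothesis on the complexification); it is needed in the uniqueness step, to guarantee that the complexifications are genuine Kirchberg algebras so that $O_\infty$-absorption and the approximate intertwining descend to the real algebras. If you intend this as a citation of a known theorem, as the paper does, your summary is serviceable; if you intend it as a proof, essentially all of the hard work remains outstanding.
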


Now let us define the united K-theory which consists of three functors real K-theory, complex K-theory and self-conjugate K-theory. United K-theory is a device that Boersema \cite{Boersema} used to construct a K\"unneth type sequence of real C*-algebras. Indeed, united K-theory solves the problem of projective dimension one in the category of $K_{*}(\mathbb{R})$-modules.
\begin{definition}\label{united K-th}
	Let $A$ be a real C*-algebra, then the united K-theory of $A$,  $K^{\text{CRT}}(A)$ is a triple of $\mathbb{Z}$-graded abelian groups
	\begin{align*}
	K^{\text{CRT}}(A):= \{ KO_{*}(A), KU_{*}(A), KT_{*}(A) \},
	\end{align*}
where $KO_{*}(A)$ is the ordinary real K-theory which has period $8$, $KU_{*}(A)= K_{*}(\mathbb{C} \otimes A)$ is the complex K-theory with period $2$, and $KT_{*}(A)=K_{*}(T \otimes A)$ is the self-conjugate K-theory and it has period $4$, where $T$ is the real C*-algebra defined as
\begin{align*}
T=\{f \in C([0,1],\mathbb{C}) | f(0)=\overline{f(1)}  \}
\end{align*}
then if $A$ is any other C*-algebra
\begin{align*}
T \otimes A\cong \{f \in C([0,1],\mathbb{C} \otimes A) | f(0)=\overline{f(1)}  \}
\end{align*}
the conjugation of $\mathbb{C} \otimes A$ is defined by $ \overline{\lambda \otimes a} = \overline{\lambda} \otimes a$.
\end{definition}

An object $M=\{M^{O}, M^{U}, M^{T}  \}$ in the category $\text{CRT}$ is said to be acyclic if the sequences in \cite{Boersema} are exact. Also in \cite{Boersema} it is shown that the united K-theory defined in \eqref{united K-th} is acyclic. But acyclic objects can be used in the united K-theory concept to show that if a map is isomorphism the other two are also isomorphisms.
\begin{lemma} \cite{Boersema} \label{acyclic}
Suppose that $\phi: M \rightarrow N$ is a map of acyclic objects of $\text{CRT}$. If one of $\phi^{O}$, $\phi^{U}$ and $\phi^{T}$ is an isomorphism, then the other two are also isomorphisms.
\end{lemma}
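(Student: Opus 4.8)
The plan is to exploit the fact that a morphism $\phi$ in $\text{CRT}$ commutes with every natural CRT operation, together with the fact that acyclicity of $M$ and $N$ means precisely that each of Boersema's defining sequences is exact on $M$ and on $N$. First I would observe that, by naturality of the operations $c$ (complexification), $r$ (realification), multiplication by $\eta$, the Bott maps, and the self-conjugate operations, the map $\phi$ carries the long exact sequence of $M$ to the corresponding long exact sequence of $N$; that is, it gives a commutative ladder. The whole argument then reduces to repeated applications of the five lemma to these ladders, propagating an isomorphism from one component to the others. I will describe in detail the case in which $\phi^{U}$ is assumed to be an isomorphism; the remaining two starting cases are handled by the same method applied to the appropriate sequences.

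Assuming $\phi^{U}$ is an isomorphism, I would first recover $\phi^{T}$. Because the real C*-algebra $T$ defined above is the mapping torus of complex conjugation on the complexification, the self-conjugate theory sits in a Wang-type exact sequence in which each group $KT_{n}$ is flanked on both sides by complex groups, giving segments of the form $KU \to KU \to KT_{n} \to KU \to KU$, with maps built from $1-\psi$ (conjugation) and the natural transformations linking $KT$ and $KU$. In the induced ladder every $KU$ rung is an isomorphism by hypothesis, so the five lemma applied at the $KT_{n}$ rung gives at once that $\phi^{T}_{n}$ is an isomorphism for all $n$.

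The genuinely delicate step is recovering $\phi^{O}$. Here I would use the $\eta$-cofiber sequence relating real and complex K-theory, of the form $\cdots \to KO_{n} \xrightarrow{\eta} KO_{n+1} \xrightarrow{c} KU_{n+1} \to KO_{n-1} \to \cdots$. \emph{This is where the main obstacle lies:} two real groups occur consecutively, joined by multiplication by $\eta$, so knowing only that the $KU$ rungs are isomorphisms does not let the five lemma pin down a single $KO$ rung, since the window around a term $KO_{n}$ still contains an unknown neighbouring real group. I would resolve this in one of two ways: either run the five lemma as an induction on the degree $n$, using $8$-periodicity of $KO$, $2$-periodicity of $KU$, and $4$-periodicity of $KT$ to make the induction well-founded and to close it up after finitely many steps; or feed the already-established isomorphisms $\phi^{T}$ back in through the further CRT relations (such as $rc = 2$, $cr = 1+\psi$, and the identities connecting $r$ and $c$ to the self-conjugate operations), which supply enough independent constraints to corner $\phi^{O}$ in every degree. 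The remaining starting cases follow by the same mechanism, since every pair among the components $O$, $U$, $T$ is linked by one of the acyclic sequences, so that once the five lemma together with the periodicity induction is in hand, an assumed isomorphism in any single component propagates to the other two.
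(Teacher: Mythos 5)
The paper does not actually prove this lemma: it is quoted verbatim from Boersema's work (and ultimately goes back to Bousfield's analysis of acyclic CRT-modules), so there is no in-paper argument to compare against. Judged on its own terms, your skeleton --- naturality of the CRT operations gives commutative ladders between the defining exact sequences of $M$ and $N$, and the five lemma propagates isomorphisms between components --- is the right starting point, and you correctly isolate the real difficulty: in the Bott sequence $\cdots \to M^{U}_{n+1} \to M^{O}_{n-1} \xrightarrow{\eta} M^{O}_{n} \to M^{U}_{n} \to \cdots$ two real terms sit side by side, so knowing $\phi^{U}$ alone never fills a full five-term window.

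The gap is in how you propose to get past that point. Your first fix, ``induction on the degree $n$ using the periodicities,'' is circular: Bott periodicity makes these sequences periodic in $n$, so an induction on degree has no base case --- periodicity is precisely what prevents you from ever reaching a rung where the five lemma applies unconditionally. Your second fix is too weak as stated: the relations $rc=2$ and $cr=1+\psi$ only show that $\ker\phi^{O}$ and $\operatorname{coker}\phi^{O}$ are annihilated by $2$, not that they vanish. The missing ingredient is the nilpotence of the Hopf operation, $\eta^{3}=0$, together with the relation $\eta\circ r=0$: a diagram chase in the ladder over the $\eta$-Bott sequence, using that $\phi^{U}$ is bijective, shows that $\eta$ acts \emph{surjectively} on $\ker\phi^{O}_{*}$ and on $\operatorname{coker}\phi^{O}_{*}$ in each degree, and then $\eta^{3}=0$ forces both to be zero. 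Without invoking $\eta^{3}=0$ (or an equivalent structural fact about acyclic CRT-modules) the argument does not close. Finally, the closing sentence that ``the remaining starting cases follow by the same mechanism'' conceals that the direction $\phi^{T}\Rightarrow\phi^{U}$ meets the same consecutive-terms obstruction in the sequence $\cdots\to KU_{n+1}\xrightarrow{\gamma}KT_{n}\xrightarrow{\zeta}KU_{n}\xrightarrow{1-\psi}KU_{n}\to\cdots$, where two unknown $KU$ terms are adjacent; that case needs its own chase and cannot be waved through.
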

Now we use the classification Theorems \eqref{comp_classification} and \eqref{real_classification} to prove that if there is a real structure for a separable nuclear purely infinite simple complex C*-algebras which are in the UCT class, then it is unique up to isomorphism.
\begin{theorem}
Let $A$ be a complex separable nuclear purely infinite simple C* algebra which is in the UCT class, and  $B$ be a real separable nuclear purely infinite simple C* algebra in the UCT class. Suppose that $A$ and $B$ have the same complex K-theory then $B$ is a unique real structure of $A$ up to isomorphism.
\end{theorem}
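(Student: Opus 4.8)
\emph{Proof proposal.} The plan is to prove the statement in two stages: first that $B$ is a real structure of $A$, meaning $B_{\mathbb{C}}\cong A$, and then that among the real C*-algebras satisfying the stated hypotheses such a real structure is unique up to isomorphism. Throughout I would read the hypothesis ``$A$ and $B$ have the same complex K-theory'' through the definition of united K-theory, where the complex part of $K^{\text{CRT}}(B)$ is $KU_*(B)=K_*(B_{\mathbb{C}})$; the hypothesis is thus $K_*(B_{\mathbb{C}})\cong K_*(A)$.

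For the first stage, I would observe that the complexification $B_{\mathbb{C}}=B\otimes_{\mathbb{R}}\mathbb{C}$ inherits separability and nuclearity from $B$, lies in the complex UCT class because $B$ lies in the real UCT class, and is purely infinite and simple under the standing hypotheses. Hence both $A$ and $B_{\mathbb{C}}$ are complex separable nuclear purely infinite simple C*-algebras in the UCT class whose complex K-theories are isomorphic, and \cref{comp_classification} produces a C*-isomorphism $B_{\mathbb{C}}\cong A$. This exhibits $B$ as a real structure of $A$.

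For the uniqueness stage, let $B'$ be any real separable nuclear purely infinite simple C*-algebra in the UCT class that is also a real structure of $A$, so that $B'_{\mathbb{C}}\cong A\cong B_{\mathbb{C}}$ and therefore $KU_*(B')\cong KU_*(B)$. By \cref{real_classification} it suffices to produce a CRT-isomorphism $K^{\text{CRT}}(B)\cong K^{\text{CRT}}(B')$. Here I would invoke \cref{acyclic}: since the united K-theories of $B$ and $B'$ are acyclic objects of $\mathrm{CRT}$, it is enough to build a single morphism $\phi\colon K^{\text{CRT}}(B)\to K^{\text{CRT}}(B')$ in $\mathrm{CRT}$ whose complex component $\phi^{U}$ is an isomorphism; the lemma then forces $\phi^{O}$ and $\phi^{T}$ to be isomorphisms as well, so $\phi$ is a CRT-isomorphism and $B\cong B'$.

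The step I expect to be the main obstacle is producing a genuine CRT-morphism $\phi$ with isomorphic complex component. The difficulty is that $KO_*$ and $KT_*$ are invariants of the \emph{real} algebras $B$ and $B'$, so a complex isomorphism $B_{\mathbb{C}}\cong B'_{\mathbb{C}}$ does not by functoriality induce maps on the real or self-conjugate parts; moreover a CRT-morphism must commute with complexification, realification and the conjugation operator $\psi$ on $KU_*$ induced by the two real structures, and an arbitrary complex isomorphism of the complexifications need not intertwine these conjugations. I would address this by passing to real $KK$-theory, where Boersema's real UCT presents $KKO(B,B')$ as an extension with quotient $\mathrm{Hom}_{\mathrm{CRT}}\bigl(K^{\text{CRT}}(B),K^{\text{CRT}}(B')\bigr)$, so that the task reduces to checking that the classifying complex isomorphism can be arranged to respect the conjugation operator, whereupon it underlies a $\mathrm{CRT}$-morphism and \cref{acyclic} promotes it to a CRT-isomorphism. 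Reconciling the two conjugations on $K_*(A)$ coming from the two real structures of $A$ is the delicate heart of the argument, and it is precisely the rigidity supplied by pure infiniteness, simplicity and the UCT that I would expect to make this possible.
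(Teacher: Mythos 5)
Your proposal follows the paper's proof essentially step for step: Phillips's classification (\cref{comp_classification}) gives $B_{\mathbb{C}}\cong A$, and uniqueness is obtained by promoting an isomorphism of the $KU$-parts to a full CRT-isomorphism via \cref{acyclic} and then invoking \cref{real_classification}. The one place you diverge is instructive: the ``main obstacle'' you flag --- that \cref{acyclic} applies to an actual morphism $\phi$ of acyclic CRT-objects, whereas a complex isomorphism $B_{1,\mathbb{C}}\cong B_{2,\mathbb{C}}$ only hands you abstract isomorphisms of the $KU$-groups and does not by functoriality produce the $\phi^{O}$ and $\phi^{T}$ components --- is real, and the paper's own proof passes over it silently, applying the lemma directly to the group isomorphisms $KU_*(B_1)\cong KU_*(B_2)$. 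Your suggested repair via Boersema's real UCT (realizing a CRT-morphism with invertible $U$-component as a class in $KKO(B_1,B_2)$) is the right direction and goes beyond what the paper writes down, though as you acknowledge you have not fully carried out the step of intertwining the two conjugation operators; completing that would actually strengthen the argument relative to the published version.
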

\begin{proof}
By hypothesis $K_{*}(A)=KU_{*}(B)$ which means that $K_{*}(A)=K_{*}(B \otimes \mathbb{C})$, now by classification Theorem \eqref{comp_classification} we have $A \cong B \otimes \mathbb{C} $ which means that $A$ is the complexification of the real C*-algebra $B$. For showing the uniqueness, suppose that there are $B_{1}$ and $B_{2}$ real C*-algebras under the hypothesis such that $K_{*}(A)=KU_{*}(B_{1})$ and $K_{*}(A)=KU_{*}(B_{2})$ therefore $KU_{*}(B_{1})=KU_{*}(B_{2})$, but, $K^{\text{CRT}}$ is acyclic so by Lemma \eqref{acyclic} $KO_{*}(B_{1})=KO_{*}(B_{2})$ and $KT_{*}(B_{1})=KT_{*}(B_{2})$ which implies $K^{\text{CRT}}(B_{1})=K^{\text{CRT}}(B_{2})$ and then by classification Theorem \eqref{real_classification} we have $B_{1} \cong B_{2}$.
\end{proof}

 Following this theorem, we will show in Corollary \eqref{real structe of UHF} that every complex UHF-algebra has a unique real UHF structure.

\section{Characters of Grothendieck rings of real C*-algebras}
For the Grothendieck rings of real C*-algebras we introduce two candidate  characters. Let us define two different additive maps on a collection of real C*-algebras. For the multiplicativity we need to use the K\"unneth theorem, but the K\"unneth theorem for complex C*-algebras cannot be adapted to real C*-algebras.
For real C*-algebras a universal coefficient theorem was proven in \cite{Boersema} using united K-theory. We take advantage of these developments to provide characters for the the Grothendieck rings of real C*-algebras.

Recall that $N$ is the bootstrap category of complex C*-algebras defined in Section $3$.

\begin{theorem} [\protect{K\"unneth theorem \cite{Boersema}}]
Let $A$ and $B$ be real C*-algebras such that $ A \otimes \mathbb{C} \in N$. Then there is a natural short exact sequence
 \begin{align*}
 0 \rightarrow K^{\text{CRT}}_{*}(A) \otimes_{\text{CRT}} K^{\text{CRT}}_{*}(B) \xrightarrow{\alpha}  K^{\text{CRT}}_{*}(A \otimes B) \xrightarrow{\beta} \mathrm{Tor}_{\text{CRT}} ( K^{\text{CRT}}_{*}(A),K^{\text{CRT}}_{*}(B)) \rightarrow 0
 \end{align*}
 which does not necessarily split.
\end{theorem}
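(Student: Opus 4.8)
The plan is to imitate the Rosenberg--Schochet proof of the complex K\"unneth theorem, but to run the entire homological-algebra argument inside the category $\text{CRT}$ rather than the category of abelian groups. The decisive structural input, recalled just before \cref{united K-th}, is that united K-theory has projective dimension one over the coefficient object $K^{\text{CRT}}_*(\R)$; this is exactly what fails for ordinary $KO_*$ viewed over $KO_*(\R)$, and it is what makes a two-term K\"unneth sequence possible at all. Everything downstream is an adaptation of the classical geometric-resolution argument to this enriched coefficient structure.

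First I would pin down the map $\alpha$. Since $A\tensor\C\in N$, the algebra $A$ is nuclear, so the real tensor product $A\tensor B$ is unambiguous, and the external product in K-theory gives a pairing $K^{\text{CRT}}_*(A)\times K^{\text{CRT}}_*(B)\to K^{\text{CRT}}_*(A\tensor B)$ defined simultaneously on the $KO$, $KU$, and $KT$ components and compatible with the complexification, realification, and Bott operations that constitute a $\text{CRT}$-morphism. This pairing is bilinear and balanced over $K^{\text{CRT}}_*(\R)$, hence factors through $\otimes_{\text{CRT}}$ to produce $\alpha$, with naturality in each variable inherited from naturality of the external product. Next, fixing $A$, I would resolve the second variable geometrically: because $K^{\text{CRT}}_*(B)$ admits a projective resolution of length one,
\[ 0 \to P_1 \to P_0 \to K^{\text{CRT}}_*(B) \to 0, \]
and because each free $\text{CRT}$-module is realized as the united K-theory of a countable direct sum of suspensions of the building blocks $\R$, $\C$, $\mathbb{H}$, and $T$, I would realize $P_0=K^{\text{CRT}}_*(F_0)$ together with a morphism $F_0\to B$ inducing $P_0\to K^{\text{CRT}}_*(B)$ and pass to its mapping cone, obtaining an extension of real C*-algebras whose long exact sequence in $K^{\text{CRT}}$ presents $B$ by the displayed resolution, with $F_1$ realizing $P_1$.

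Finally I would apply the homology theory $K^{\text{CRT}}_*(A\tensor-)$ to this extension. For the building blocks the K\"unneth map is an isomorphism by direct computation: tensoring $A$ with $\R$, $\C$, $\mathbb{H}$, $T$ and their suspensions merely shifts and recombines the three components of $K^{\text{CRT}}_*(A)$, matching $\otimes_{\text{CRT}}$ with a free module. Hence $\alpha$ is an isomorphism on $F_0$ and $F_1$, and the long exact sequence attached to the mapping cone collapses, precisely because the resolution has length one, into
\[ 0 \to K^{\text{CRT}}_*(A)\otimes_{\text{CRT}} K^{\text{CRT}}_*(B) \xrightarrow{\alpha} K^{\text{CRT}}_*(A\tensor B) \xrightarrow{\beta} \mathrm{Tor}_{\text{CRT}}(K^{\text{CRT}}_*(A),K^{\text{CRT}}_*(B)) \to 0, \]
the kernel and cokernel of the connecting map being exactly the derived functors $\otimes_{\text{CRT}}$ and $\mathrm{Tor}_{\text{CRT}}$. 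The failure of a natural splitting reflects that, unlike over a principal ideal domain, the $\text{CRT}$ structure furnishes no natural section of $\beta$.

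The main obstacle is the homological step underpinning everything: establishing that $\text{CRT}$-modules of geometric origin admit length-one projective resolutions, and identifying the correct derived functors $\otimes_{\text{CRT}}$ and $\mathrm{Tor}_{\text{CRT}}$ as the ones appearing above. Concretely one must verify that $\R$, $\C$, $\mathbb{H}$, and $T$ generate enough free $\text{CRT}$-modules to realize both $P_0$ and $P_1$ geometrically, and that $\beta$ is independent of the chosen resolution. This is where the explicit module theory of the ring $K^{\text{CRT}}_*(\R)$, rather than any soft categorical formalism, does the essential work, and it is the reason the theorem is credited to the united K-theory machinery of Boersema.
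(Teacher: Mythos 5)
This theorem is not proved in the paper at all: it is imported verbatim from Boersema's work on united K-theory, with only the citation \cite{Boersema} standing in for a proof. So there is no in-paper argument to compare yours against; what can be assessed is whether your sketch is a faithful reconstruction of Boersema's argument. In outline it is: the two genuine inputs are (i) Bousfield's theorem that acyclic $\text{CRT}$-modules have projective dimension at most one over $K^{\text{CRT}}_*(\mathbb{R})$ (the paper itself flags this as the raison d'\^etre of united K-theory), and (ii) a Schochet-style geometric resolution realizing a length-one projective resolution by an extension of real C*-algebras built from suspensions of the elementary algebras $\mathbb{R}$, $\mathbb{C}$, $T$ (your inclusion of $\mathbb{H}$ is harmless, as $K^{\text{CRT}}(\mathbb{H})$ is a suspension of $K^{\text{CRT}}(\mathbb{R})$). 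You correctly identify these as the load-bearing steps.

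There is, however, one concrete misstep: you fix $A$ and resolve $B$, whereas the bootstrap hypothesis $A\otimes\mathbb{C}\in N$ sits on $A$. The resolution must be taken in the variable carrying that hypothesis. The reason is your final step, ``hence $\alpha$ is an isomorphism on $F_0$ and $F_1$'': for $F_0$ a genuine direct sum of suspended elementary algebras this is automatic for any partner algebra, but $F_1$ arises as a mapping cone, and a mapping cone with projective united K-theory is \emph{not} itself a sum of elementary algebras. One needs it to be $KK$-equivalent to one, and that is exactly what membership in the bootstrap class supplies (the class is closed under mapping cones, and a bootstrap algebra with free/projective invariant is equivalent to a standard model). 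If you resolve $B$ with no hypothesis on $B$, the cone of $F_0\to B$ need not have this property and the collapse of the long exact sequence is unjustified. Resolving $A$ instead, the cone inherits the bootstrap property from $A$, and tensoring the resulting triangle with an arbitrary $B$ gives the isomorphisms you need on the sub- and quotient terms. With that variable swap, and granting the $\text{CRT}$ homological algebra you explicitly defer to Boersema, the sketch is the right proof.
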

Let $N_{\mathbb{R}}$ be the bootstrap category of real C*-algebras, which is the smallest subcategory of real C*-algebras whose complexification is nuclear, contains real separable type $I$ C*-algebras, is closed under the operation of taking inductive limits, stable isomorphisms, and crossed product by $\mathbb{Z}$ and $\mathbb{R}$, and satisfies the two out of three rule for short exact sequences, then we have the following theorem:

\begin{theorem}

Let $A$ and $B$ be real C*-algebras and $A \in N_{\mathbb{R}}$. Then there is a natural short exact sequence
 \begin{align*}
 0 \rightarrow K^{\text{CRT}}_{*}(A) \otimes_{\text{CRT}} K^{\text{CRT}}_{*}(B) \xrightarrow{\alpha}  K^{\text{CRT}}_{*}(A \otimes B) \xrightarrow{\beta} \mathrm{Tor}_{\text{CRT}}( K^{\text{CRT}}_{*}(A),K^{\text{CRT}}_{*}(B)) \rightarrow 0.
 \end{align*}
 \end{theorem}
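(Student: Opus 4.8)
The plan is to reduce the claimed K\"unneth sequence for the real bootstrap class $N_{\mathbb{R}}$ to the previously stated K\"unneth theorem of Boersema, whose hypothesis is that the complexification $A\otimes\mathbb{C}$ lies in the complex bootstrap class $N$. The essential point is therefore to verify that membership of $A$ in $N_{\mathbb{R}}$ forces $A\otimes\mathbb{C}\in N$, after which the two sequences coincide verbatim and there is nothing further to prove. First I would observe that the complexification functor $A\mapsto A\otimes_{\mathbb{R}}\mathbb{C}$ is well behaved with respect to exactly the operations used to generate $N_{\mathbb{R}}$: it is exact, it sends inductive limits to inductive limits, it commutes with stabilization and with crossed products by $\mathbb{Z}$ and $\mathbb{R}$, and it sends a short exact sequence of real C*-algebras to a short exact sequence of complex C*-algebras, so it respects the two-out-of-three property.

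The key steps, in order, are as follows. First I would set up complexification as a functor and record its exactness and its compatibility with inductive limits, stable isomorphism, and crossed products by $\mathbb{Z}$ and $\mathbb{R}$; the crossed-product compatibility is the identity $(A\rtimes_\alpha G)\otimes_{\mathbb{R}}\mathbb{C}\cong (A\otimes_{\mathbb{R}}\mathbb{C})\rtimes_{\alpha\otimes\mathrm{id}}G$ for $G=\mathbb{Z},\mathbb{R}$. Second I would check the generators: by definition $N_{\mathbb{R}}$ is generated by real separable type $I$ C*-algebras, and the complexification of a real separable type $I$ C*-algebra is a complex separable type $I$ C*-algebra, hence lies in $N$ (the complex bootstrap class contains the type $I$ algebras). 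Third, since the class $\{A : A\otimes_{\mathbb{R}}\mathbb{C}\in N\}$ contains the generators of $N_{\mathbb{R}}$ and is closed under all the defining operations of $N_{\mathbb{R}}$ by the first step, minimality of $N_{\mathbb{R}}$ gives the inclusion $A\in N_{\mathbb{R}}\implies A\otimes\mathbb{C}\in N$. Finally, applying the previously stated K\"unneth theorem of Boersema to such an $A$ produces exactly the asserted short exact sequence.

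I expect the main obstacle to be verifying cleanly that complexification interacts correctly with \emph{all} of the defining operations of $N_{\mathbb{R}}$ simultaneously, and in particular the crossed-product compatibility: one must confirm that the real crossed product by $\mathbb{Z}$ or $\mathbb{R}$ complexifies to the corresponding complex crossed product, and that the resulting real-to-complex functor genuinely respects the two-out-of-three closure (i.e.\ that a class defined by a complexification condition is closed under extensions). This is where the definitions of $N_{\mathbb{R}}$ and $N$ must be matched operation by operation; the subtlety is purely functorial bookkeeping rather than any deep analysis. Once the inclusion $A\otimes\mathbb{C}\in N$ is established, the conclusion is immediate, so essentially all the work of this theorem lies in that functorial comparison of the real and complex bootstrap classes under complexification.
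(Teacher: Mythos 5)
Your reduction is correct and is precisely the argument the paper leaves implicit: the theorem is stated there without proof, as an immediate consequence of the preceding K\"unneth theorem of Boersema once one knows that $A\in N_{\mathbb{R}}$ forces $A\otimes_{\mathbb{R}}\mathbb{C}\in N$, which your minimality argument (the generators complexify into $N$, and the class $\{A : A\otimes_{\mathbb{R}}\mathbb{C}\in N\}$ is closed under each defining operation of $N_{\mathbb{R}}$) establishes. The only detail worth recording explicitly is closure of the complex bootstrap class under crossed products by $\mathbb{R}$, which follows from Connes' Thom isomorphism since $(A\otimes_{\mathbb{R}}\mathbb{C})\rtimes\mathbb{R}$ is $KK$-equivalent to a suspension.
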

 The map $\alpha$ is a $\text{CRT}$-module homomorphism and is defined by
 \begin{align*}
 \alpha_{O}: KO_{m}(A) \otimes KO_{n}(B) \rightarrow KO_{m+n}(A \otimes B)
 \end{align*}
  \begin{align*}
 \alpha_{U}: KU_{m}(A) \otimes KU_{n}(B) \rightarrow KU_{m+n}(A \otimes B)
 \end{align*}
  \begin{align*}
 \alpha_{T}: KT_{m}(A) \otimes KT_{n}(B) \rightarrow KT_{m+n}(A \otimes B)
 \end{align*}
 and the map $\beta$ is a $\text{CRT}$-homomorphism of degree -1.
Now consider $K^{\text{CRT}}$, $KO$, $KU$, and $KT$ we define the even and odd parts of these K-theories as follows
 $$K^{\text{CRT}}_{i}:=KO_{(i  \mod 8)} \oplus KU_{(i \mod 2)} \oplus  KT_{(i \mod 4)} $$
 for every $i$ in $\mathbb{Z}$,  and
 \begin{align*}
 &K^{\text{CRT}}_{even}:=K^{\text{CRT}}_{0} \oplus K^{\text{CRT}}_{2} \oplus K^{\text{CRT}}_{4} \oplus K^{\text{CRT}}_{6}, \\
&K^{\text{CRT}}_{odd}:=K^{\text{CRT}}_{1} \oplus K^{\text{CRT}}_{3} \oplus K^{\text{CRT}}_{5} \oplus K^{\text{CRT}}_{7},\\
&KO_{even}:=KO_{0} \oplus KO_{2} \oplus KO_{4} \oplus KO_{6}, \\ &KO_{odd}:=KO_{1} \oplus KO_{3} \oplus KO_{5} \oplus KO_{7}, \\
& KT_{even}:=KT_{0} \oplus KT_{2}, \\
&KT_{odd:}=KT_{1} \oplus KT_{3}, \\
&KU_{even}:=KU_{0}, \\
&KU_{odd}:=KU_{1}.
 \end{align*}

\begin{definition} \label{real character}
Consider a collection of real C*-algebras, then for every $A$ in this collection for which $KO(A)$, $KU(A)$ and $KT(A)$ are finitely generated define the map $\chi_{1}^{R}(A):=m-n$ where $m$ and $n$ are defined as follows

$m$ $=$ the number of free generators of $\ KO_{odd}(A)$ $-$ the number of free generators of $KU_{odd}(A)$ $+$ the number of free generators of $KT_{odd}(A)$

and

$n$ $=$ the number of free generators of $KO_{even}(A)$ $-$ the number of free generators of $KU_{even}(A)$ $+$ the number of free generators of $KT_{even}(A)$.
\end{definition}

\begin{theorem} \label{24-length}
For any short exact sequence
\begin{align*}
0 \rightarrow A \rightarrow C \rightarrow B \rightarrow 0
\end{align*}
of real (complex) C*-algebras, there is a long exact sequence in K-theory

\begin{align*}
... \rightarrow K_{n}(A) \rightarrow K_{n}(C) \rightarrow K_{n}(B) \rightarrow K_{n-1}(A) \rightarrow K_{n-1}(C) \rightarrow ... .
\end{align*}
\end{theorem}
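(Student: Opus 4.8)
The plan is to reduce the statement to the basic half-exactness of the functor $K_0$ together with the suspension isomorphism, and then to invoke Bott periodicity to fold the resulting long exact sequence into its cyclic form. First I would recall that the higher $K$-groups are defined by suspension, $K_n(A) := K_0(S^n A)$, where $SA := C_0(\R) \tensor A$ is the suspension (taken with the appropriate real structure in the real case). The functor $K_0$ is \emph{half-exact}: an extension $0 \to A \to C \to B \to 0$ yields a sequence $K_0(A) \to K_0(C) \to K_0(B)$ that is exact at the middle term. This is a routine verification using lifts of idempotents (equivalently projections), and the argument is valid over both $\R$ and $\C$.

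The second step is to propagate half-exactness through all degrees. Since tensoring by the nuclear algebra $C_0(\R)$ is an exact operation, applying the suspension functor to the given extension produces, for every $n \geq 0$, a short exact sequence $0 \to S^n A \to S^n C \to S^n B \to 0$. Applying half-exactness of $K_0$ to each of these yields exactness in the middle of $K_n(A) \to K_n(C) \to K_n(B)$. The connecting homomorphism $\partial \colon K_n(B) \to K_{n-1}(A)$ is then supplied by the mapping-cone construction: writing $C_\phi$ for the mapping cone of the quotient map $\phi \colon C \to B$, one shows that the canonical inclusion of $A$ into $C_\phi$ induces an isomorphism on $K$-theory, and the boundary map is read off from the associated Puppe sequence $0 \to SB \to C_\phi \to C \to 0$. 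Splicing these pieces together produces the half-infinite long exact sequence $\cdots \to K_n(A) \to K_n(C) \to K_n(B) \xrightarrow{\partial} K_{n-1}(A) \to \cdots$, with naturality inherited from the naturality of the mapping cone.

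Finally, I would invoke Bott periodicity to extend this to a bi-infinite periodic sequence and recover the stated form. In the complex case the period is $2$, so the long exact sequence collapses to the familiar six-term cyclic sequence. In the real case the relevant periodicity is that of $KO$, with period $8$; since an extension involves three algebras, the long exact sequence becomes cyclic of length $24$, which is exactly what the label of this theorem records.

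The main obstacle is genuinely the existence, exactness, and naturality of the connecting map at the $K_n(B)$ and $K_{n-1}(A)$ junctions; these are not formal consequences of half-exactness alone, but require the mapping-cone (Puppe-sequence) argument and, ultimately, Bott periodicity. Over $\C$ these facts are classical and I would cite the standard references; over $\R$ the analogous statements are developed in the literature on real $K$-theory, and it is that machinery I would cite rather than reprove.
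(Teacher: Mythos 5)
Your proposal is correct and is essentially the argument the paper relies on: the paper's proof of this theorem is a one-line citation to Blackadar's book, and the half-exactness/suspension/mapping-cone/Bott-periodicity chain you sketch is precisely the standard proof being cited there, valid verbatim over both $\mathbb{R}$ and $\mathbb{C}$. Your closing remark about the period-$8$ real case folding into a cyclic sequence of length $24$ also matches the paper's subsequent discussion following the theorem.
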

\begin{proof}
Given the identical nature of the proofs in both the real and complex cases, we refer to \cite{blackadar}.
\end{proof}

By incorporating the Bott periodicity theorem, which exhibits periodicity of $8$ for $KO$, $2$ for $KU$, and $4$ for $KT$, we are able to establish a cyclic sequence of length $24$ within $K_*^{\text{CRT}}$.

\begin{theorem} \label{cyclic}
Let $K^{\text{CRT}}_{even}$ and $K^{\text{CRT}}_{odd}$ are defined as above. Then there is a  cyclic exact sequence
\[
\begin{tikzcd}
K^{\text{CRT}}_{even}(A) \ar{r} & K^{\text{CRT}}_{even}(C) \ar{r} & K^{\text{CRT}}_{even}(B)\ar{d} \\
K^{\text{CRT}}_{odd}(B) \ar{u} & K^{\text{CRT}}_{odd}(C) \ar{l} & K^{\text{CRT}}_{odd}(A) \ar{l}
\end{tikzcd}
\]
\end{theorem}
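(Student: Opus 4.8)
The plan is to reduce the statement to three independent applications of the long exact sequence of \cref{24-length}, one for each of the constituent theories $KO$, $KU$, and $KT$, and then to recombine them by a direct sum. First I would apply \cref{24-length} to the short exact sequence $0 \to A \to C \to B \to 0$ separately for $KO_*$, $KU_*$, and $KT_*$, obtaining three long exact sequences, each of the shape
$$\cdots \to X_n(A) \to X_n(C) \to X_n(B) \xrightarrow{\partial} X_{n-1}(A) \to \cdots,$$
where the two degree-preserving maps come from the inclusion and the quotient, and the connecting map $\partial$ lowers degree by one.

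Next I would use Bott periodicity to close each long exact sequence into a cyclic one: period $2$ for $KU$ gives the familiar six-term sequence, period $4$ for $KT$ gives a cyclic sequence of length $12$, and period $8$ for $KO$ gives one of length $24$. The heart of the argument is then to \emph{fold} each cyclic sequence by parity of degree, collecting degrees $0,2,4,6$ into an even part $X_{even}$ and degrees $1,3,5,7$ into an odd part $X_{odd}$ (with the obvious shorter ranges for $KT$ and $KU$). Because the degree-preserving maps respect parity while the connecting map $\partial$ reverses it, the maps assemble into $X_{even}(A)\to X_{even}(C)\to X_{even}(B)$, a boundary $X_{even}(B)\to X_{odd}(A)$, then $X_{odd}(A)\to X_{odd}(C)\to X_{odd}(B)$, and a closing boundary $X_{odd}(B)\to X_{even}(A)$---precisely the six-term cyclic shape of the theorem. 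Exactness is inherited from the original sequence because a direct sum of exact sequences is exact: at each folded vertex the kernel of the outgoing map is the direct sum over the relevant degrees of the corresponding kernels, and these match the images of the incoming maps degree by degree.

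Finally I would take the direct sum of the three six-term cyclic exact sequences. Writing $K^{\text{CRT}}_{even} = KO_{even}\oplus KU_{even}\oplus KT_{even}$ and similarly for the odd part, and again using that a direct sum of exact sequences is exact, yields the asserted cyclic exact sequence for $K^{\text{CRT}}$. The main obstacle I anticipate is the bookkeeping in the folding step, namely verifying that the connecting maps genuinely interleave even and odd degrees so that exactness survives at the two parity seams $X_{even}(B)\to X_{odd}(A)$ and $X_{odd}(B)\to X_{even}(A)$. This works because every period here is even, so the degree-lowering boundary always switches parity and no even degree is ever joined directly to another even degree; the case of $KT$, where the period-$4$ wraparound carries degree $0$ to degree $3$, is the place where this interleaving should be checked explicitly.
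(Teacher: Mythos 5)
Your proposal is correct and follows essentially the same route as the paper, whose proof is the one-line instruction to combine the long exact sequence of \cref{24-length} with Bott periodicity and the folding construction of \cref{Folding}; your parity-folding of the cyclic sequences of lengths $24$, $6$, and $12$ is exactly what iterating that folding lemma accomplishes. The only cosmetic difference is that you fold $KO$, $KU$, $KT$ separately and then take the direct sum, whereas the paper assembles a single length-$24$ cyclic sequence in $K^{\text{CRT}}_*$ first and folds once; these agree up to reordering direct summands.
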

\begin{proof}
Apply Theorem \eqref{24-length}, and Lemma \eqref{Folding}.
 \end{proof}

Now define $\overline{S}(K^{\text{CRT}}_{even}(A)):= S(K^{\text{CRT}}_{0}(A)) \oplus S(K^{\text{CRT}}_{2}(A)) \oplus S(K^{\text{CRT}}_{4}(A)) \oplus S(K^{\text{CRT}}_{6}(A))  $ and $\overline{S}(K^{\text{CRT}}_{odd}(A)):= S(K^{\text{CRT}}_{1}(A)) \oplus S(K^{\text{CRT}}_{3}(A)) \oplus S(K^{\text{CRT}}_{5}(A)) \oplus S(K^{\text{CRT}}_{7}(A))  $ where $S$ is a function defined in Definition \eqref{functionS} .

\begin{lemma} \label{S}
Let $S$ and $\overline{S}$ defined as above, then we have
\begin{align*}
(\overline{S}(K^{\text{CRT}}_{odd}(C)-\overline{S}(K^{\text{CRT}}_{even}(C))=(\overline{S}(K^{\text{CRT}}_{odd}(A)-\overline{S}(K^{\text{CRT}}_{even}(A))+(\overline{S}(K^{\text{CRT}}_{odd}(B)-\overline{S}(K^{\text{CRT}}_{even}(B)).
\end{align*}
\end{lemma}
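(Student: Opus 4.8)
The plan is to recognize this identity as a direct instance of the cyclic-sequence additivity lemma (\cref{function}), applied to the length-six exact hexagon furnished by \cref{cyclic}, once we observe that $\overline{S}$ is nothing but $S$ evaluated on the even/odd aggregate groups. First I would set up the dictionary between the two hexagons: matching the cyclic sequence of \cref{cyclic} against the template $G_1 \to G_2 \to G_3 \to G_4 \to G_5 \to G_6 \to G_1$ of \cref{function}, I read off $G_1 = K^{\text{CRT}}_{even}(A)$, $G_2 = K^{\text{CRT}}_{even}(C)$, $G_3 = K^{\text{CRT}}_{even}(B)$, $G_4 = K^{\text{CRT}}_{odd}(A)$, $G_5 = K^{\text{CRT}}_{odd}(C)$, and $G_6 = K^{\text{CRT}}_{odd}(B)$, so that the even groups occupy the top row and the odd groups the bottom row, with the connecting map $G_3 \to G_4$ being the degree-shifting boundary map.

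Next I would verify that $\overline{S}$ meets the additivity hypothesis demanded by \cref{function}. Since $S$ is real-valued and additive in the sense of \cref{functionS}, applying it to the split short exact sequence $0 \to H_1 \to H_1 \oplus H_2 \to H_2 \to 0$ gives $S(H_1 \oplus H_2) = S(H_1) + S(H_2)$; iterating over the four graded summands shows that $\overline{S}(K^{\text{CRT}}_{even}(\cdot))$, defined as the aggregate $S(K^{\text{CRT}}_0) + S(K^{\text{CRT}}_2) + S(K^{\text{CRT}}_4) + S(K^{\text{CRT}}_6)$ (reading the $\oplus$ of the definition as a sum of real numbers), is exactly $S$ evaluated on the single group $K^{\text{CRT}}_{even}(\cdot)$, and likewise for the odd part. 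Hence $\overline{S}$ coincides with $S$ on these aggregate groups and therefore inherits additivity on every short exact sequence, which is precisely what \cref{function} requires of its input function.

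With both hypotheses in place, \cref{function} yields
\[
(\overline{S}(G_5) - \overline{S}(G_2)) = (\overline{S}(G_4) - \overline{S}(G_1)) + (\overline{S}(G_6) - \overline{S}(G_3)),
\]
and substituting the dictionary above turns this line for line into the claimed identity. The only genuine care needed is bookkeeping: confirming that the arrows of \cref{cyclic} are oriented so that the even and odd slots match the template, and that the aggregate $\overline{S}$ is interpreted as a real sum rather than a group direct sum. I expect the substantive content, namely the folding of the length-$24$ $K^{\text{CRT}}$-sequence into a hexagon via Bott periodicity, to have already been discharged in \cref{cyclic} (itself resting on \cref{Folding}), so the present lemma presents no real obstacle beyond this index matching.
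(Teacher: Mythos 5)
Your proposal is correct and matches the argument the paper intends: the paper states this lemma without proof, but its evident justification is exactly your route, namely feeding the six-term hexagon of Theorem \ref{cyclic} into Lemma \ref{function} with the dictionary $G_1,\dots,G_6 = K^{\text{CRT}}_{even}(A), K^{\text{CRT}}_{even}(C), K^{\text{CRT}}_{even}(B), K^{\text{CRT}}_{odd}(A), K^{\text{CRT}}_{odd}(C), K^{\text{CRT}}_{odd}(B)$, which is precisely the corollary already recorded after Lemma \ref{Folding}. Your additional check that $\overline{S}$ agrees with $S$ on the aggregate even/odd groups (reading the $\oplus$ in its definition as a real sum) is a worthwhile clarification of a notational ambiguity the paper leaves implicit.
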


\begin{theorem}
Consider a collection of real C*-algebras with $KO(A)$, $KU(A)$ and $KT(A)$ finitely generated. Then $\chi_{1}^{R}$ we defined above is additive, in the sense that if
\begin{align*}
0 \rightarrow A \rightarrow C \rightarrow B \rightarrow 0
\end{align*}
is a short exact sequence then $\chi_{1}^{R}(C)=\chi_{1}^{R}(A)+\chi_{1}^{R}(B)$.
\end{theorem}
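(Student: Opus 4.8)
The plan is to reduce the additivity of $\chi_{1}^{R}$ to the additivity of the ordinary rank function applied to each of the three component theories $KO$, $KU$, $KT$ \emph{separately}, and then to recombine them with the signs dictated by \cref{real character}. Throughout, let $S(G)$ denote the rank of a finitely generated abelian group $G$, i.e. its number of free generators. The hypothesis guarantees that $KO(A)$, $KU(A)$, $KT(A)$ are all finitely generated, so every group below is finitely generated; and exactly the argument of \cref{additive-chi1} --- tensoring a short exact sequence with $\Q$ (\cref{split}) and using that $\dim_{\Q}$ is additive on short exact sequences of $\Q$-vector spaces --- shows that $S$ is additive in the sense of \cref{functionS}. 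Note also that $S$ is additive over direct sums, so $S(E_{even})$ and $S(E_{odd})$ decompose into the ranks of their homogeneous summands.

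First I would treat each theory in isolation. Fix $E \in \{KO, KU, KT\}$, with Bott period $8$, $2$, $4$ respectively. By \cref{24-length} the sequence $0 \to A \to C \to B \to 0$ induces a long exact sequence in $E$, which by periodicity closes into a cyclic exact sequence of length $24$, $6$, and $12$ for $KO$, $KU$, $KT$ respectively (three times the period). The telescoping computation in the proof of \cref{function} applies verbatim to any cyclic exact sequence of \emph{even} length and shows that the alternating sum of the ranks of its terms vanishes. Writing this alternating sum out and collecting terms according to which of $A$, $C$, $B$ they involve and the parity of the degree, one checks that around each of these cycles the degrees and the signs alternate in lockstep, so the even-degree $E$-groups of each algebra carry one sign and the odd-degree groups the opposite sign, while the middle algebra $C$ always enters with sign opposite to $A$ and $B$. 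Setting $\delta^{E}(X) := S(E_{odd}(X)) - S(E_{even}(X))$, the vanishing alternating sum rearranges to $\delta^{E}(C) = \delta^{E}(A) + \delta^{E}(B)$. Equivalently, one may first fold the periodic cyclic sequence down, in the manner of \cref{Folding} and \cref{cyclic}, to the six-term cyclic exact sequence
\[ E_{even}(A) \to E_{even}(C) \to E_{even}(B) \to E_{odd}(A) \to E_{odd}(C) \to E_{odd}(B) \to E_{even}(A), \]
and then apply \cref{function} to this sequence with the additive function $S$.

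It then remains to assemble the three pieces. Reading off \cref{real character}, the quantities $m$ and $n$ are precisely $m = S(KO_{odd}) - S(KU_{odd}) + S(KT_{odd})$ and $n = S(KO_{even}) - S(KU_{even}) + S(KT_{even})$, whence $\chi_{1}^{R} = m - n = \delta^{KO} - \delta^{KU} + \delta^{KT}$. Since each $\delta^{E}$ is additive by the previous paragraph, this fixed signed linear combination of additive functions is again additive, giving $\chi_{1}^{R}(C) = \chi_{1}^{R}(A) + \chi_{1}^{R}(B)$, as required.

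The step I expect to be the main obstacle is conceptual rather than computational: one must resist routing the argument through \cref{S}. That lemma aggregates the three theories inside $K^{\mathrm{CRT}}_{even}$ and $K^{\mathrm{CRT}}_{odd}$, but in $K^{\mathrm{CRT}}_{even}=K^{\mathrm{CRT}}_{0}\oplus K^{\mathrm{CRT}}_{2}\oplus K^{\mathrm{CRT}}_{4}\oplus K^{\mathrm{CRT}}_{6}$ the group $KU_{0}$ occurs four times and each of $KT_{0}$, $KT_{2}$ occurs twice (and similarly in the odd part); consequently \cref{S} produces the combination with weights $(1,4,2)$ on $(KO,KU,KT)$ rather than the signed weights $(1,-1,1)$ that define $\chi_{1}^{R}$. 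For this reason the additivity of $\chi_{1}^{R}$ does \emph{not} follow from \cref{S} directly, and the three theories genuinely must be handled one at a time as above. The only remaining care is then the routine bookkeeping, verified in the second paragraph, that in each periodic cyclic sequence consecutive degrees alternate in parity in lockstep with the alternating signs, so that even degrees are consistently matched against even and odd against odd.
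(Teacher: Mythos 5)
Your proof is correct, and it takes a genuinely different---and more careful---route than the paper's. The paper's proof is three lines long: it invokes the aggregated six-term cyclic exact sequence of \cref{cyclic} in $K^{\text{CRT}}_{even}$ and $K^{\text{CRT}}_{odd}$, tensors with $\mathbb{Q}$, and then applies \cref{function} followed by \cref{S}. You instead run the rank computation through each of $KO$, $KU$, $KT$ separately, closing each long exact sequence of \cref{24-length} into a periodicity cycle of even length ($24$, $6$, $12$), extracting additivity of $\delta^{E}=S(E_{odd})-S(E_{even})$ for each theory individually, and only then forming the signed combination $\chi_{1}^{R}=\delta^{KO}-\delta^{KU}+\delta^{KT}$. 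Your reason for refusing to route the argument through \cref{S} is well taken: since $K^{\text{CRT}}_{even}$ contains $KU_{0}$ four times and each of $KT_{0}$, $KT_{2}$ twice, the quantity whose additivity \cref{function} and \cref{S} deliver, with $S$ the rank, is the $(1,4,2)$-weighted combination of the three rank defects rather than the $(1,-1,1)$-weighted combination defining $\chi_{1}^{R}$ in \cref{real character}, and additivity of one linear combination of the $\delta^{E}$ does not imply additivity of another. Read literally, the paper's aggregated argument therefore establishes additivity of a different functional, and your componentwise decomposition is exactly the supplement that makes the stated conclusion follow. What the paper's route buys is brevity and verbatim reuse of its earlier lemmas; what yours buys is that the signs in the definition of $\chi_{1}^{R}$ are actually accounted for.
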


\begin{proof}
Suppose that
\begin{align*}
0 \rightarrow A \rightarrow C \rightarrow B \rightarrow 0
\end{align*}
is a short exact sequence of real C*-algebras with $KO(A)$, $KU(A)$ and $KT(A)$ finitely generated. According to Theorem \eqref{cyclic} there exists a cyclic exact sequence
\[
\begin{tikzcd}
K^{\text{CRT}}_{even}(A) \ar{r} & K^{\text{CRT}}_{even}(C) \ar{r} & K^{\text{CRT}}_{even}(B)\ar{d} \\
K^{\text{CRT}}_{odd}(B) \ar{u} & K^{\text{CRT}}_{odd}(C) \ar{l} & K^{\text{CRT}}_{odd}(A) \ar{l}
\end{tikzcd}
\]

In order to recover the free parts, we tensor the above cyclic sequence by $\mathbb{Q}$ and then apply Lemma \eqref{function} and then Lemma \eqref{S}.
\end{proof}
In our second definition of an additive map on a collection of real C*-algebras we use the universal real UHF-algebra. Real UHF-algebras are the inductive limit of the sequence of real matrix algebras. The universal real UHF-algebra $U_{r}$ is a real UHF-algebra and will play a role analogous to the universal complex UHF-algebra $U$. The united K-theory of $ U_{r}$ is given as follows \cite{BoersemaUHF}

\begin{table}[!h]
\begin{center}
\begin{tabular}{l l l l l l l l l }
   \label{p1}&0   & 1&2&3 &4 &5 &6&7 \\
    $KO_{*}(U_{r})$ &$\mathbb{Q}$ & 0& 0& 0 & $\mathbb{Q}$ &0 & 0& 0\\
    $KU_{*}(U_{r})$ &$\mathbb{Q}$ & 0& $\mathbb{Q}$& 0 & $\mathbb{Q}$ &0 & $\mathbb{Q}$& 0\\
    $KT_{*}(U_{r})$ &$\mathbb{Q}$ & 0& 0& $\mathbb{Q}$& $\mathbb{Q}$ &0 & 0&$\mathbb{Q}$\\
\end{tabular}
\end{center}
\end{table}

We recall that every real UHF-algebra is in the UCT class and the proof is the same as in the complex case.

\begin{lemma} \label{R.free ab}
Let $U_{r}$ be the universal real UHF-algebra and $A$ be a real C*-algebra then $K_{*}^{\text{CRT}}( A \otimes U_{r} )$ is a $\mathbb{Q}$-module.
\end{lemma}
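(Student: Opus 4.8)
The plan is to establish this as the united-K-theory analogue of \cref{UHF}, replacing the complex KTP sequences by the K\"unneth theorem for real C*-algebras stated above and feeding in the values of $K^{\text{CRT}}_*(U_r)$ recorded in the preceding table. First I would verify that the hypotheses of that K\"unneth theorem can be met. The tensor product of real C*-algebras is commutative up to isomorphism, so $A\otimes U_r\isom U_r\otimes A$, and it therefore suffices to place $U_r$ in the bootstrap slot. Since $U_r$ is a real UHF-algebra, it is an inductive limit of real matrix algebras; these are separable and type $I$, hence lie in $N_{\mathbb{R}}$, and $N_{\mathbb{R}}$ is closed under inductive limits, so $U_r\in N_{\mathbb{R}}$. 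This is the real counterpart of the observation, used for \cref{UHF}, that every complex UHF-algebra is in the KTP class.

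With this in hand I would write down the natural CRT-K\"unneth sequence
\begin{align*}
0 \to K^{\text{CRT}}_{*}(U_r) \otimes_{\text{CRT}} K^{\text{CRT}}_{*}(A) \xrightarrow{\alpha} K^{\text{CRT}}_{*}(U_r\otimes A) \xrightarrow{\beta} \mathrm{Tor}_{\text{CRT}}\bigl(K^{\text{CRT}}_{*}(U_r),K^{\text{CRT}}_{*}(A)\bigr) \to 0
\end{align*}
and substitute the computed value of $K^{\text{CRT}}_*(U_r)$, every nonzero component of which is a copy of $\mathbb{Q}$. Just as the complex Tor terms $\mathrm{Tor}(K_i(A),K_j(U))$ vanished in the proof of \cref{UHF} because $K_*(U)$ is torsion-free, the right-hand term here should vanish because each component of $K^{\text{CRT}}_*(U_r)$ is a divisible, torsion-free, hence flat, $\mathbb{Z}$-module. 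The map $\alpha$ would then be an isomorphism
\begin{align*}
K^{\text{CRT}}_{*}(A\otimes U_r)\isom K^{\text{CRT}}_{*}(A) \otimes_{\text{CRT}} K^{\text{CRT}}_{*}(U_r),
\end{align*}
and since the CRT-tensor product is assembled from ordinary tensor products of the graded pieces $KO$, $KU$, $KT$ subject to the module relations, tensoring against a factor whose pieces are all $\mathbb{Q}$-vector spaces produces $\mathbb{Q}$-vector spaces in every component. Thus $K^{\text{CRT}}_*(A\otimes U_r)$ is a $\mathbb{Q}$-module, as claimed.

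The main obstacle is that $\otimes_{\text{CRT}}$ and $\mathrm{Tor}_{\text{CRT}}$ are derived functors in Boersema's abelian category $\mathrm{CRT}$ of $K^{\text{CRT}}_*(\mathbb{R})$-modules, not the naive componentwise operations, so the vanishing of the Tor term and the $\mathbb{Q}$-module conclusion require a flatness statement internal to that category rather than mere flatness of $\mathbb{Q}$ over $\mathbb{Z}$. I would discharge this by showing that a CRT-module all of whose components are $\mathbb{Q}$-vector spaces is flat in $\mathrm{CRT}$: such a module is uniquely divisible, so the $\mathbb{Z}$-action extends to a $\mathbb{Q}$-action compatible with the CRT-structure, and any CRT-projective resolution of the other variable stays exact after tensoring against it. As an independent consistency check one can treat the $KU$ component directly: $KU_*(A\otimes U_r)=K_*\bigl((\mathbb{C}\otimes_{\mathbb{R}}A)\otimes_{\mathbb{C}}(\mathbb{C}\otimes_{\mathbb{R}}U_r)\bigr)$, where $\mathbb{C}\otimes_{\mathbb{R}}U_r$ is the universal complex UHF-algebra, so \cref{UHF} already gives $KU_*(A\otimes U_r)\isom KU_*(A)\otimes\mathbb{Q}$, confirming the $KU$ part of the conclusion and isolating the genuinely new content in the $KO$ and $KT$ components.
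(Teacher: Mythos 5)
Your proposal is correct and follows essentially the same route as the paper: invoke the real (united K-theory) K\"unneth sequence with $U_r$ in the bootstrap slot, note from the table that every component of $K^{\text{CRT}}_*(U_r)$ is $\mathbb{Q}$ or $0$ so that the $\mathrm{Tor}_{\text{CRT}}$ term vanishes and $\alpha$ is an isomorphism, and conclude that the target is a $\mathbb{Q}$-module. You are in fact more careful than the paper, which silently treats $\otimes_{\text{CRT}}$ and $\mathrm{Tor}_{\text{CRT}}$ as if componentwise; your flatness argument internal to the category $\mathrm{CRT}$ and your $KU$-component consistency check are welcome additions but do not change the underlying method.
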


\begin{proof}
Since $U_{r}$ is in the UCT class then there is a short exact sequence
\begin{align*}
 0 \rightarrow K^{\text{CRT}}_{*}(A) \otimes_{\text{CRT}} K^{\text{CRT}}_{*}(U_{r}) \xrightarrow{\alpha}  K^{\text{CRT}}_{*}(A \otimes U_{r}) \xrightarrow{\beta} \mathrm{Tor}_{\text{CRT}}( K^{\text{CRT}}_{*}(A),K^{\text{CRT}}_{*}(U_{r})) \rightarrow 0
 \end{align*}
 but according to the above table $K^{\text{CRT}}_{*}(U_{r})$ is either $\mathbb{Q}$ or $0$, therefore the map $\alpha$ is an isomorphism which means that
 \begin{align*}
 K^{\text{CRT}}_{*}(A) \otimes_{\text{CRT}} K^{\text{CRT}}_{*}(U_{r}) \cong K^{\text{CRT}}_{*}(A \otimes U_{r})
 \end{align*}
 from this we can conclude that $K^{\text{CRT}}_{*}(A \otimes U_{r})$ is isomorphic to a $\mathbb{Q}$-module.
 \end{proof}
 Real AF-algebras were classified by Giordano  using an invariant consisting of $K_{0}(A)$, $K_{2}(A)$, $K_{4}(A)$  and an order structure on $K_{0}(A) \oplus K_{2}(A)$, where $A$ is an AF-algebra.
 \begin{theorem} \cite{Gior} \label{giordano}
 Let $A$ and $B$ be real AF algebras. if for $i=0,2$ and 4, $\psi_{i}: K_{i}(A) \rightarrow K_{i}(B)$ are group isomorphisms such that $\{ \psi_{0}, \psi_{2}, \psi_{4} \} \in \Psi(A,B)$ and $\{ \Psi_{0}^{-1}, \Psi_{2}^{-1}, \Psi_{4}^{-1} \} \in \Psi(B,A)$, then there is an isomorphism $\alpha:A \rightarrow B$ such that $K_{*}(\alpha)=\Psi_{*}$.
 \end{theorem}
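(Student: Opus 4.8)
The plan is to prove this by Elliott's approximate intertwining argument, adapted to the real setting — the same method that classifies complex AF algebras by their ordered $K_0$ group, but now driven by the richer invariant consisting of the triple $(K_0, K_2, K_4)$ together with the order structure on $K_0 \oplus K_2$. First I would write $A = \varinjlim(A_n, \phi_n)$ and $B = \varinjlim(B_n, \phi_n')$ as inductive limits of finite-dimensional real C*-algebras; each building block is a finite direct sum of matrix algebras over $\R$, $\C$, and $\mathbb{H}$. The three groups in the invariant are tailored to record, for each block, the multiplicity data that distinguishes the three types of simple summand and governs how they embed into one another, while the order structure encodes positivity.

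The heart of the argument is an \emph{existence} and a \emph{uniqueness} statement at the level of building blocks. For existence, I would show that the restriction of the invariant isomorphism $\{\psi_0,\psi_2,\psi_4\}$ to a finite-dimensional subalgebra $A_n$ is realized, up to arbitrarily small perturbation, by an honest $*$-homomorphism $A_n \to B_m$ for $m$ large. This rests on the fact that $*$-homomorphisms between finite-dimensional real C*-algebras are parametrized by their partial-multiplicity data, which is precisely what the induced maps on $(K_0, K_2, K_4)$ encode; the hypothesis that the triple lies in $\Psi(A,B)$ is exactly the admissibility condition guaranteeing that these multiplicities are non-negative integers and hence realizable. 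For uniqueness, I would show that two such homomorphisms inducing the same map on the invariant are unitarily equivalent, exactly in the finite-dimensional case and approximately after composition with a further connecting map.

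With existence and uniqueness in hand, I would assemble the standard approximate intertwining diagram: construct $*$-homomorphisms $\alpha_n \colon A_n \to B_{m_n}$ and $\beta_n \colon B_{m_n} \to A_{n'}$ that approximately commute with the connecting maps and are approximately mutually inverse, with induced K-theory maps matching $\Psi_*$ and $\Psi_*^{-1}$. The two hypotheses — that $\{\psi_0,\psi_2,\psi_4\}\in\Psi(A,B)$ and that the inverse triple $\{\psi_0^{-1},\psi_2^{-1},\psi_4^{-1}\}\in\Psi(B,A)$ — supply exactly the admissibility needed to run the construction in both directions simultaneously, so that the limit map $\alpha = \lim \alpha_n \colon A \to B$ is an isomorphism with $K_*(\alpha) = \Psi_*$.

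The main obstacle is the building-block classification underlying the existence and uniqueness step: one must verify that $(K_0, K_2, K_4)$ with the order structure on $K_0 \oplus K_2$ really is a complete invariant for $*$-homomorphisms between finite-dimensional real C*-algebras up to unitary equivalence, tracking correctly how $M_n(\R)$, $M_n(\C)$, and $M_n(\mathbb{H})$ map into one another, including the behaviour of the torsion. This is the genuinely real-specific content, and it is where all the subtlety is concentrated; the equivalence of Giordano's invariant with the Goodearl--Handelman invariant cited in the text is what makes this combinatorics tractable.
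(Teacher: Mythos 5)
The paper does not prove this statement---it is quoted (with the notation $\Psi(A,B)$ left undefined) directly from Giordano's classification paper \cite{Gior}, so there is no in-paper argument to compare against. Your outline---Elliott approximate intertwining driven by existence and uniqueness of $*$-homomorphisms between finite-dimensional real C*-algebras, with $(K_0,K_2,K_4)$ and the order on $K_0\oplus K_2$ recording the $\mathbb{R}/\mathbb{C}/\mathbb{H}$ multiplicity data---is exactly the strategy of Giordano's proof, so your proposal takes the correct and standard route, with the caveat that the genuinely hard building-block classification is identified but deferred rather than carried out.
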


 \begin{lemma} \cite{Schroder} \label{Schroder}
Let $A$ be a real UHF-algebra. Then we have
\begin{align}
    K_{n}(A) =
    \begin{cases}
        \mathbb{Z}\left[\frac{1}{s}\right], & \text{if } n = 0, 4, \\
        \mathbb{Z}_{2}, & \text{if } n = 1, 2 \text{ and } \mathbb{Z}\left[\frac{1}{2}\right] \not\subset K_{0}(A), \\
        0, & \text{otherwise.}
    \end{cases}
\end{align}
where $\mathbb{Z}\left[\frac{1}{s}\right]$ is an additive subgroup of $\mathbb{Q}$ associated to the supernatural number $s$, which is a formal infinite prime factorization, i.e., $s = \prod_{j=1}^{\infty} p^{s_{j}}$ with $s_{j} \in \mathbb{Z}_{+} \cup \{\infty\}$, defined by
\begin{align*}
    \mathbb{Z}\left[\frac{1}{s}\right] = \left\{ \frac{r}{q} \in \mathbb{Q} \mid r \in \mathbb{Z}, q \in \mathbb{N}, q | s \right\}.
\end{align*}
\end{lemma}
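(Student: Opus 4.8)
The plan is to realize the computation as a direct colimit calculation in real K-theory. Write the real UHF-algebra as a unital inductive limit $A=\varinjlim_i M_{r_i}(\mathbb{R})$ with $r_i\mid r_{i+1}$, connecting multiplicities $k_i:=r_{i+1}/r_i$, and associated supernatural number $s=\prod_i k_i$ (so that by design $K_0(A)=\mathbb{Z}[1/s]$). First I would recall the real K-theory of the ground field, namely $KO_n(\mathbb{R})=\mathbb{Z},\mathbb{Z}_2,\mathbb{Z}_2,0,\mathbb{Z},0,0,0$ for $n=0,\dots,7$, together with Bott $8$-periodicity. By stability (Morita invariance) $KO_n(M_r(\mathbb{R}))\cong KO_n(\mathbb{R})$, and since $KO$-theory is continuous under inductive limits of C*-algebras, $KO_n(A)\cong\varinjlim_i KO_n(M_{r_i}(\mathbb{R}))$ with connecting maps induced by the unital embeddings. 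Thus everything reduces to identifying these connecting maps and taking the direct limit of a sequence of copies of a single fixed group.

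The key step, and the main obstacle, is to show that a unital embedding $M_r(\mathbb{R})\hookrightarrow M_{rk}(\mathbb{R})$ of multiplicity $k$ induces multiplication by $k$ on every group $KO_n(\mathbb{R})$. On $KO_0$ this is immediate, since the embedding multiplies the rank (dimension) of a projection by $k$. For the torsion groups $KO_1,KO_2$ I would use naturality of the external product together with the $KO_0(\mathbb{R})=\mathbb{Z}$-module structure on $KO_*(\mathbb{R})$: up to the stable identification, the embedding is the external product with the class $[1_k]\in KO_0$, which acts on each $KO_n$ as multiplication by $k$. Since multiplication by $k$ on $\mathbb{Z}_2$ is the identity when $k$ is odd and the zero map when $k$ is even, this gives exactly the information needed for the torsion part.

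With the connecting maps identified, I would finish by computing the four relevant colimits and reading off the cases. For $n=0,4$, where $KO_n(\mathbb{R})=\mathbb{Z}$, we get $\varinjlim(\mathbb{Z}\xrightarrow{\times k_i}\mathbb{Z})=\mathbb{Z}[1/s]$. For $n=3,5,6,7$ every term vanishes, so the limit is $0$. For $n=1,2$, where $KO_n(\mathbb{R})=\mathbb{Z}_2$, the colimit $\varinjlim(\mathbb{Z}_2\xrightarrow{\times k_i}\mathbb{Z}_2)$ equals $\mathbb{Z}_2$ precisely when only finitely many $k_i$ are even (eventually the maps are isomorphisms) and equals $0$ when infinitely many $k_i$ are even (cofinally the maps are zero). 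This dichotomy is exactly the $2$-adic content of the supernatural number: the power of $2$ in $s$ is finite iff $\mathbb{Z}[1/2]\not\subset\mathbb{Z}[1/s]=K_0(A)$. Hence $K_1(A)=K_2(A)=\mathbb{Z}_2$ under the stated hypothesis and $0$ otherwise, matching the claimed table. Note that no appeal to Giordano's classification (Theorem \eqref{giordano}) is required; the computation is entirely a functorial colimit in $KO$-theory.
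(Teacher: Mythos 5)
Your proposal is correct, but note that the paper does not actually prove \cref{Schroder} at all: it is imported wholesale from Schr\"oder's book, and the only fragment the authors re-derive themselves is the torsion dichotomy for $n=1,2$, which appears separately as \cref{even map} (the inductive limit of $\mathbb{Z}_{2}\xrightarrow{\times d_i}\mathbb{Z}_{2}$ with connecting maps acting as $0$ or $1$ according to the parity of the multiplicity). Your argument is therefore a genuine addition rather than a variant: you supply the full computation by writing $A=\varinjlim M_{r_i}(\mathbb{R})$, invoking continuity and Morita invariance of $KO$, recalling $KO_*(\mathbb{R})=(\mathbb{Z},\mathbb{Z}_2,\mathbb{Z}_2,0,\mathbb{Z},0,0,0)$, and identifying each connecting map as multiplication by the multiplicity $k_i$ via the $KO_0(\mathbb{R})$-module structure. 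All the steps check out: the colimit $\varinjlim(\mathbb{Z}\xrightarrow{\times k_i}\mathbb{Z})=\mathbb{Z}[1/s]$ gives the $n=0,4$ rows, the zero groups propagate for $n=3,5,6,7$, and your observation that the $2$-adic valuation of $s$ is infinite exactly when infinitely many $k_i$ are even correctly translates the colimit dichotomy for $\mathbb{Z}_2$ into the condition $\mathbb{Z}[1/2]\not\subset K_0(A)$; this last piece is precisely the content of the paper's \cref{even map}, so your torsion computation and theirs agree. The one point worth making explicit if you write this up is that an arbitrary unital embedding $M_r(\mathbb{R})\hookrightarrow M_{rk}(\mathbb{R})$ is unitarily equivalent to the block-diagonal multiplicity-$k$ one (all unital $*$-representations of $M_r(\mathbb{R})$ decompose into copies of the unique simple module), so that the identification of the induced map with the external product by $[1_k]$ is legitimate; with that remark included, your proof is complete and self-contained, whereas the paper leans on the external reference.
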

In the following lemma we show that for $n=1,2$ if $\mathbb{Z}[\frac{1}{2}] \not\subset K_{0}(A)$ or equivalently if there are infinitely many even maps or in other words infinitely many zero maps then $K_{1}(A)=K_{2}(A)=0$.

\begin{lemma} \label{even map}
Let $A$ be a real UHF-algebra and suppose that there are infinitely many even maps then $K_{1}(A)=K_{2}(A)=0$.
\end{lemma}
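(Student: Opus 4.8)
The plan is to read off the claim directly from Schröder's computation in \cref{Schroder}. That lemma gives, for a real UHF-algebra $A$, that $K_{1}(A)$ and $K_{2}(A)$ equal $\mathbb{Z}_{2}$ precisely in the regime where $\mathbb{Z}[\frac{1}{2}] \not\subset K_{0}(A)$, and are $0$ otherwise. So the entire content of the present lemma is to verify that the hypothesis ``there are infinitely many even maps'' forces the ``otherwise'' branch of \cref{Schroder}, i.e.\ forces $\mathbb{Z}[\frac{1}{2}] \not\subset K_{0}(A)$ to \emph{fail}, which is to say it forces $\mathbb{Z}[\frac{1}{2}] \subset K_{0}(A)$. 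Hence the whole argument reduces to a statement purely about the dimension group $K_{0}(A)$.

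First I would recall that a real UHF-algebra is the inductive limit of real matrix algebras $M_{k_{1}}(\mathbb{R}) \to M_{k_{2}}(\mathbb{R}) \to \cdots$, and that the group $K_{0}(A)$ is the corresponding inductive limit $\varinjlim (\mathbb{Z}, \times m_{i})$, where the multiplier $m_{i}$ of each connecting map records the multiplicity of the embedding. The resulting group is exactly $\mathbb{Z}[\frac{1}{s}]$ for the supernatural number $s = \prod m_{i}$, as in \cref{Schroder}. An ``even map'' is a connecting map whose multiplier $m_{i}$ is even, and ``infinitely many even maps'' means $2 \mid m_{i}$ for infinitely many indices $i$. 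The key arithmetic observation is then that this makes $2$ appear with infinite exponent in the supernatural number $s$, so that $\frac{1}{2^{k}} \in K_{0}(A)$ for every $k$; in particular every fraction with denominator a power of $2$ lies in $K_{0}(A)$, which is precisely the statement $\mathbb{Z}[\frac{1}{2}] \subset K_{0}(A)$.

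Having established $\mathbb{Z}[\frac{1}{2}] \subset K_{0}(A)$, the conditional clause in \cref{Schroder} for $n=1,2$ is not satisfied, so that lemma lands us in its final case and yields $K_{1}(A) = K_{2}(A) = 0$, which is exactly the claim. I would phrase the conclusion as: infinitely many even connecting maps $\Longrightarrow 2^{\infty} \mid s \Longrightarrow \mathbb{Z}[\frac{1}{2}] \subset K_{0}(A) \Longrightarrow$ the exceptional $\mathbb{Z}_{2}$ case of \cref{Schroder} is excluded.

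The only genuine subtlety — and the step I would be most careful about — is translating the informal phrase ``infinitely many even maps (equivalently, infinitely many zero maps)'' into the precise divisibility condition on the supernatural number, and confirming that ``$2$ occurs infinitely often as a factor'' really does give the full subgroup $\mathbb{Z}[\frac{1}{2}]$ rather than some smaller group; this is where one must be sure that the supernatural number has $s_{j} = \infty$ for the prime $p = 2$, as opposed to merely a large finite exponent. Everything else is a direct appeal to \cref{Schroder}, so the main obstacle is conceptual bookkeeping about supernatural numbers rather than any hard analysis.
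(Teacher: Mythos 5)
Your proof is correct, but it takes a genuinely different route from the paper. You treat Schr\"oder's computation (\cref{Schroder}) as a black box and reduce the whole lemma to bookkeeping at the level of $K_0$: infinitely many even multiplicities force $2$ to occur with infinite exponent in the supernatural number $s$, hence $\mathbb{Z}[\tfrac{1}{2}]\subset \mathbb{Z}[\tfrac{1}{s}]=K_0(A)$, which excludes the $\mathbb{Z}_2$ branch of \cref{Schroder} and lands you in the case $K_1(A)=K_2(A)=0$. The paper instead works directly with $K_1$ and $K_2$: it writes them as the inductive limit of $\mathbb{Z}_2\xrightarrow{\times d_1}\mathbb{Z}_2\xrightarrow{\times d_2}\cdots$, observes that a connecting map of even multiplicity acts as zero on $\mathbb{Z}_2$, and concludes that infinitely many zero maps annihilate every element of the limit. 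Your version is shorter and isolates the one genuinely delicate point (that ``infinitely many even maps'' gives the full subgroup $\mathbb{Z}[\tfrac{1}{2}]$, not just large finite $2$-divisibility), which you handle correctly; the cost is that it leans entirely on the precise conditional clause in \cref{Schroder}, whereas the paper's inductive-limit computation is self-contained and in effect re-derives that branch of Schr\"oder's dichotomy, making visible why the parity of the multiplicities is the relevant datum. Incidentally, your reading is the right one: the sentence in the paper immediately preceding the lemma says ``$\mathbb{Z}[\frac{1}{2}]\not\subset K_0(A)$ or equivalently \dots infinitely many even maps,'' which appears to be a typo for ``$\subset$,'' and your explicit translation resolves that ambiguity correctly.
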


\begin{proof}
Since $K_{*}(\mathbb{R})=\mathbb{Z}_{2}$ for $*=1$ and $2$, then $K_{1}(A)$ and $K_{2}(A)$ is the inductive limit of the sequence:

\begin{align}  \label{seq}
\mathbb{Z}_{2} \xrightarrow{\times d_{1}} \mathbb{Z}_{2} \xrightarrow{\times d_{2}} \mathbb{Z}_{2} \xrightarrow{\times d_{3}} \cdots \rightarrow K_{*}(A).
\end{align}
In this case, however, the connecting maps will act like zero if they have even multiplicity and like 1 if they have odd multiplicity. Now let there are infinitely many even maps or in other words infinitely many zero maps and consider the sequence
\begin{align*}
G_{1} \xrightarrow{d_{1}}  G_{2} \xrightarrow{d_{2}} G_{3} \xrightarrow{d_{3}} \cdots
\end{align*}
where $G_{i}$ is a group for every $i$, in our case $G_{i}=\mathbb{Z}_{2}$ for every $i$, and the connecting map $d_{i}:G_{i} \rightarrow G_{i+1}$ is a group homomorphism for every $i$. Let the inductive limit of this sequence be $(G,\{ \mu_{i}\}_{i=1}^{\infty})$, where G is a group and $\mu_{i}:G_{i} \rightarrow G$ is a group homomorphism for every $i$. Suppose that there are infinitely many even maps in the sequence \eqref{seq}, which means that there are infinitely many zero maps in the sequence \eqref{seq}, in other words, for every $N \in \mathbb{N}$ there exists $M>N$ such that $d_{M}(x)=0$, where $x \in \mathbb{Z}_2$. Choose $ 1 \in G_{n}$, let us say 1 is in the finite stage related to $G_{n}$, but by mapping it into the next finite stage, and repeating (finitely many times is enough) we will hit one of the connecting maps that act like zero, i.e,. $\mu_{n,M+1}(1) =0$. Then applying the definition of the inductive limit $G=K_{1}(A)=K_{2}(A)=0$.
\end{proof}

\begin{corollary} \label{real UHF-classification}
Real UHF-algebras are classified by $K_{0}$.
\end{corollary}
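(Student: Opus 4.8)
The plan is to reduce the classification to Giordano's theorem (Theorem \eqref{giordano}), which classifies real AF-algebras — and hence real UHF-algebras, since every UHF-algebra is AF — by the triple of groups $K_0$, $K_2$, $K_4$ together with the order structure on $K_0 \oplus K_2$. It therefore suffices to show that, for a real UHF-algebra, every entry of this invariant is already determined by $K_0$ alone, so that an order-isomorphism of $K_0$ canonically promotes to an admissible isomorphism of the full Giordano invariant.

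First I would dispose of $K_4$. By Schr\"oder's Lemma \eqref{Schroder}, $K_4(A) = \mathbb{Z}[1/s] = K_0(A)$ for a real UHF-algebra, and this identification is natural; hence any isomorphism $\psi_0 : K_0(A) \to K_0(B)$ furnishes, tautologically, an isomorphism $\psi_4$. Next I would handle $K_2$. Again by Lemma \eqref{Schroder} together with Lemma \eqref{even map}, the group $K_2(A)$ is either $\mathbb{Z}_2$ or $0$, and which of these occurs is governed solely by whether $\mathbb{Z}[1/2] \subset K_0(A)$ — equivalently, by whether the defining sequence of $A$ has infinitely many even connecting maps. This is a property of the group $K_0(A)$ and is preserved by any isomorphism $\psi_0$. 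Hence $K_2(A) \cong K_2(B)$, with a canonical choice of $\psi_2$ (the identity of $\mathbb{Z}_2$, or the zero map on $0$) once $\psi_0$ is fixed.

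The main obstacle, and the step requiring genuine care, is the order datum. One must check that the order structure on $K_0 \oplus K_2$ and, more importantly, Giordano's admissibility conditions $\{\psi_0, \psi_2, \psi_4\} \in \Psi(A,B)$ and $\{\psi_0^{-1}, \psi_2^{-1}, \psi_4^{-1}\} \in \Psi(B,A)$ are automatically satisfied by the induced maps. Because $K_2$ is torsion and $K_0 = \mathbb{Z}[1/s]$ carries its order as a subgroup of $\mathbb{Q}$, the order on $K_0 \oplus K_2$ is essentially inherited from $K_0$, so a positive isomorphism $\psi_0$ ought to force the compatibility of the whole triple; verifying this against the precise definition of $\Psi$ (the admissible natural transformations of the CRT invariant) is where the real work lies.

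Granting this compatibility, Giordano's theorem then produces an isomorphism $A \cong B$ from any order-isomorphism of $K_0(A)$ onto $K_0(B)$, and conversely isomorphic algebras visibly have isomorphic $K_0$; this is exactly the assertion that real UHF-algebras are classified by $K_0$. I would remark in passing that this is the expected real analogue of Glimm's complex classification (Theorem \eqref{glimm}), the extra invariants $K_2$ and $K_4$ collapsing precisely because of the rigidity of the $K$-theory of real UHF-algebras recorded in Lemmas \eqref{Schroder} and \eqref{even map}.
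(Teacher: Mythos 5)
Your proof follows exactly the same route as the paper's: reduce to Giordano's theorem, observe via Schr\"oder's Lemma \ref{Schroder} that $K_4=K_0$ for a real UHF-algebra, and use Lemma \ref{even map} to see that $K_2$ (either $\mathbb{Z}_2$ or $0$) is completely determined by whether $\mathbb{Z}[\tfrac{1}{2}]\subset K_0$. The only difference is that you explicitly flag the need to check the order structure on $K_0\oplus K_2$ and Giordano's admissibility conditions $\Psi(A,B)$ --- a point the paper's proof passes over in silence --- so your version is, if anything, the more careful one.
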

\begin{proof}
Following Theorem \eqref{giordano} first we consider $K_{0}$, $K_{2}$ and $K_{4}$. Now let's begin with two real UHF-algebras $A$ and $B$, according to Lemma \eqref{Schroder} $K_{0}=K_{4}$ in the case of real UHF-algebras. If $K_{0}(A)=K_{0}(B)$, again according to Lemma \eqref{Schroder} $K_{2}(A)=K_{2}(B)$. indeed, let $\mathbb{Z}[\frac{1}{2}] \not\subset K_{0}(A)$ then since $K_{0}(A)=K_{0}(B)$ we have $K_{2}(A)=K_{2}(B)=\mathbb{Z}_{2}$. On the other hand, if $\mathbb{Z}[\frac{1}{2}] \subset K_{0}(A)$, according to Lemma \eqref{even map} $K_{2}(A)=K_{2}(B)=0$.

\end{proof}

\begin{theorem}\label{new theorem}
Let $A$ be a real UHF-algebra, then
\begin{align*}
K_{0}(A)=K_{0}(A \otimes_{\mathbb{R}} \mathbb{C})=K_{0}( A\otimes_{\mathbb{R}} \mathbb{H}).
\end{align*}
\begin{proof}
Since $A$ is a real UHF-algebra, then $A$, $A \otimes_{\mathbb{R}}\mathbb{C}$ and $A\otimes_{\mathbb{R}} \mathbb{H}$
are the inductive limit of the following sequences
\begin{align*}
M_{n_{1}}(\mathbb{R}) \rightarrow M_{k_{2}}(\mathbb{R}) \rightarrow M_{n_{3}}(\mathbb{R}) \rightarrow ... \rightarrow A,
\end{align*}
\begin{align*}
M_{n_{1}}(\mathbb{R}\otimes_{\mathbb{R}} \mathbb{C}) \rightarrow M_{n_{2}}(\mathbb{R}\otimes_{\mathbb{R}} \mathbb{C}) \rightarrow M_{n_{3}}(\mathbb{R}\otimes_{\mathbb{R}} \mathbb{C}) \rightarrow ... \rightarrow A\otimes_{\mathbb{R}} \mathbb{C}
\end{align*}
and
\begin{align*}
M_{n_{1}}(\mathbb{R}\otimes_{\mathbb{R}} \mathbb{H}) \rightarrow M_{n_{2}}(\mathbb{R}\otimes_{\mathbb{R}} \mathbb{H}) \rightarrow M_{n_{3}}(\mathbb{R}\otimes_{\mathbb{R}} \mathbb{H}) \rightarrow ... \rightarrow A\otimes_{\mathbb{R}} \mathbb{H}
\end{align*}
but $K_{0}(M_{k_{i}}(\mathbb{R}))=K_{0}(M_{k_{i}}(\mathbb{R}\otimes_{\mathbb{R}} \mathbb{C}))=K_{0}(M_{k_{i}}(\mathbb{R}\otimes_{\mathbb{R}} \mathbb{H})) =\mathbb{Z}$, and the connecting maps in these 3 sequences are the same we have the following commutative diagram

\begin{center}
\begin{tikzcd}
K_{0}(M_{n_{1}}(\mathbb{R})) \arrow[r,] \arrow[d, "\cong"] & K_{0}(M_{n_{2}}(\mathbb{R}))  \arrow[r] \arrow[d,"\cong"]& K_{0}(M_{n_{3}}(\mathbb{R}))  \arrow[r] \arrow[d,"\cong"] & ... \arrow[r] \arrow[d,"\cong"] & K_{0}(A) \arrow[d] \\
K_{0}(M_{n_{1}}(\mathbb{R})) \arrow[r,] \arrow[d, "\cong"] & K_{0}(M_{n_{2}}(\mathbb{R}))  \arrow[r] \arrow[d,"\cong"]& K_{0}(M_{n_{3}}(\mathbb{R}))  \arrow[r] \arrow[d,"\cong"] & ... \arrow[r] \arrow[d,"\cong"] & K_{0}(A \otimes_{\mathbb{R}} \mathbb{C}) \arrow[d] \\
K_{0}(M_{n_{1}}(\mathbb{H}))  \arrow[r] & K_{0}(M_{n_{2}}(\mathbb{H})) \arrow[r] & K_{0}(M_{n_{3}}(\mathbb{H})) \arrow[r] & ... \arrow[r]& K_{0}( A\otimes_{\mathbb{R}} \mathbb{H})  \\

\end{tikzcd}
\end{center}
which shows that $K_{0}(A)=K_{0}(A \otimes_{\mathbb{R}} \mathbb{C})=K_{0}(A \otimes_{\mathbb{R}} \mathbb{H})$.
\end{proof}
\end{theorem}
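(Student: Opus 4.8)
The plan is to realize all three algebras as inductive limits of matrix algebras and then appeal to the continuity of $K_0$ under inductive limits. First I would fix a presentation $A = \varinjlim M_{n_i}(\mathbb{R})$ of the real UHF-algebra, with unital connecting $*$-homomorphisms $\phi_i\colon M_{n_i}(\mathbb{R}) \to M_{n_{i+1}}(\mathbb{R})$. Since the functors $-\otimes_{\mathbb{R}}\mathbb{C}$ and $-\otimes_{\mathbb{R}}\mathbb{H}$ commute with inductive limits, and since $M_n(\mathbb{R})\otimes_{\mathbb{R}}\mathbb{C}\cong M_n(\mathbb{C})$ and $M_n(\mathbb{R})\otimes_{\mathbb{R}}\mathbb{H}\cong M_n(\mathbb{H})$, both the complexification and the quaternionification are inductive limits of the corresponding matrix algebras, with connecting maps $\phi_i\otimes\mathrm{id}$.

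Next I would record the elementary computation $K_0(M_n(\mathbb{R})) = K_0(M_n(\mathbb{C})) = K_0(M_n(\mathbb{H})) = \mathbb{Z}$, in each case generated by the class of a minimal projection, and use continuity of $K_0$ to write each of the three $K_0$-groups as $\varinjlim(\mathbb{Z}, (\phi_i)_*)$ for the appropriate induced maps. The crux is then to show that $\phi_i$, $\phi_i\otimes\mathrm{id}_{\mathbb{C}}$ and $\phi_i\otimes\mathrm{id}_{\mathbb{H}}$ all induce the \emph{same} endomorphism of $\mathbb{Z}$. A unital embedding $M_a\to M_b$ has a well-defined multiplicity $d=b/a$ and carries a minimal projection to a projection of rank $d$, hence acts as multiplication by $d$ on $K_0$; because this multiplicity is manifestly unchanged upon tensoring with $\mathbb{C}$ or $\mathbb{H}$, the three induced maps are all multiplication by the same integer $d_i$.

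With the three inductive systems on $K_0$ literally identical, I would finally organize the vertical inclusions $M_{n_i}(\mathbb{R})\hookrightarrow M_{n_i}(\mathbb{C})$ and $M_{n_i}(\mathbb{R})\hookrightarrow M_{n_i}(\mathbb{H})$ (sending $a\mapsto a\otimes 1$) into a commuting ladder, observe that each induces an isomorphism on $K_0$ compatible with the horizontal multiplication-by-$d_i$ maps, and pass to the limit to conclude $K_0(A)=K_0(A\otimes_{\mathbb{R}}\mathbb{C})=K_0(A\otimes_{\mathbb{R}}\mathbb{H})$.

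The step I expect to require the most care is the quaternionic column. In the complex case the identification $M_n(\mathbb{R})\otimes_{\mathbb{R}}\mathbb{C}=M_n(\mathbb{C})$ transparently preserves ranks of projections, so the $K_0$-maps coincide immediately. Over $\mathbb{H}$ one must confirm that a minimal real projection $e\otimes 1$ remains a minimal ($\mathbb{H}$-rank one) projection in $M_n(\mathbb{H})$, so that the vertical map sends generator to generator and intertwines the connecting maps; this is what makes the generator of $K_0(M_n(\mathbb{H}))\cong\mathbb{Z}$ correspond correctly across the diagram, and it is the one point where the noncommutativity of $\mathbb{H}$ could in principle cause trouble.
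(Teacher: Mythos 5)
Your proposal is correct and follows essentially the same route as the paper: realize all three algebras as inductive limits of matrix algebras, use continuity of $K_0$, and pass a commuting ladder of isomorphisms $\mathbb{Z}\to\mathbb{Z}$ to the limit. In fact you supply the one justification the paper merely asserts --- that the connecting maps induce the same multiplication-by-multiplicity endomorphism of $\mathbb{Z}$ in all three columns, including the quaternionic one where the generator is the class of an $\mathbb{H}$-rank-one projection --- so your write-up is, if anything, the more complete version of the same argument.
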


\begin{corollary}\label{real structe of UHF}
Every complex UHF-algebra has a  real structure which is a real UHF-algebra and it is unique up to isomorphism.
\end{corollary}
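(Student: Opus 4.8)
The plan is to realize the real structure concretely as an inductive limit of real matrix algebras, and then to pin down its complexification by comparing $K_0$-groups via the classification results already established. The whole argument is organized around the fact that a complex UHF-algebra is determined by its (pointed, ordered) $K_0$-group, together with \cref{new theorem}, which lets us transport $K_0$ freely between a real UHF-algebra and its complexification.

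For \textbf{existence}, I would first write the given complex UHF-algebra $U$ as an inductive limit
\[ M_{k_1}(\C) \xrightarrow{\psi_1} M_{k_2}(\C) \xrightarrow{\psi_2} M_{k_3}(\C) \xrightarrow{\psi_3} \cdots, \]
with unital connecting $*$-homomorphisms, which is possible by the very definition of a UHF-algebra; the associated supernatural number $s$ is encoded by the multiplicities $k_{i+1}/k_i$. Replacing each $M_{k_i}(\C)$ by the real matrix algebra $M_{k_i}(\R)$ and retaining the analogous unital block embeddings, I obtain a real UHF-algebra $A_r := \varinjlim M_{k_i}(\R)$. Since the complexification functor $(-)\otimes_\R\C$ commutes with inductive limits and $M_{k_i}(\R)\otimes_\R\C\cong M_{k_i}(\C)$, the complexification $(A_r)_\C$ is again an inductive limit of complex matrix algebras with unital connecting maps, hence is a complex UHF-algebra with the same multiplicities, i.e.\ the same supernatural number $s$ as $U$. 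By \cref{new theorem} we have $K_0((A_r)_\C)=K_0(A_r)$, and this identification carries the class of the unit (which originates from $M_{k_1}$) to the class of the unit, so the pointed groups $(K_0((A_r)_\C),[1])$ and $(K_0(U),[1])$ agree. Glimm's classification, \cref{glimm}, then gives $(A_r)_\C\cong U$, exhibiting $A_r$ as a real UHF structure of $U$.

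For \textbf{uniqueness}, suppose $A_1$ and $A_2$ are real UHF-algebras whose complexifications are each isomorphic to $U$. Applying \cref{new theorem} to each one yields
\[ K_0(A_1)=K_0((A_1)_\C)=K_0(U)=K_0((A_2)_\C)=K_0(A_2). \]
Because real UHF-algebras are classified by $K_0$ by \cref{real UHF-classification}, we conclude $A_1\cong A_2$, so the real UHF structure is unique up to isomorphism.

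I expect the only genuinely delicate point to be the existence half, and specifically the assertion that the abstractly constructed complexification $(A_r)_\C$ is not merely \emph{some} UHF-algebra sharing the $K_0$-group of $U$ but is isomorphic to $U$ on the nose. This is handled by keeping the \emph{same} connecting multiplicities throughout and by noting that $(K_0,[1])$ is a complete invariant for complex UHF-algebras, so that matching the order unit (not just the bare group) is what makes Glimm's theorem applicable. The uniqueness half, by contrast, is immediate once \cref{new theorem} and \cref{real UHF-classification} are in hand.
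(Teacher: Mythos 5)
Your proposal is correct and follows essentially the same route as the paper: produce a real UHF-algebra with the same supernatural number, use \cref{new theorem} to identify $K_0$ of that algebra with $K_0$ of its complexification, invoke Glimm's classification (\cref{glimm}) for existence, and use the classification of real UHF-algebras by $K_0$ (\cref{real UHF-classification}) for uniqueness. Your version is slightly more careful than the paper's in making the inductive-limit construction explicit and in noting that the order unit must be matched for Glimm's theorem to apply, but the substance is identical.
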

\begin{proof}
Let $A$ be a complex UHF-algebra then $K_{0}(A)$ is an additive sub-group of $\mathbb{Q}$ associated to a supernatural number, but there is a real UHF-algebra $B$ associated to that supernatural number, in other words, there is a real UHF-algebra $B$ for which we have $K_{0}(A)= K_{0}(B)$. Since $B$ is a real UHF-algebra it is the inductive limit of the sequence
\begin{align*}
M_{n_{1}}(\mathbb{R}) \rightarrow M_{k_{2}}(\mathbb{R}) \rightarrow M_{n_{3}}(\mathbb{R}) \rightarrow \cdots \to B,
\end{align*}
complexify this sequence we get
\begin{align*}
M_{n_{1}}(\mathbb{R})\otimes_{\mathbb{R}}\mathbb{C} \rightarrow M_{k_{2}}(\mathbb{R}) \otimes_{\mathbb{R}}\mathbb{C} \rightarrow M_{n_{3}}(\mathbb{R}) \otimes_{\mathbb{R}}\mathbb{C}\rightarrow \cdots\to B\otimes_{\mathbb{R}}\mathbb{C},
\end{align*}
Now we claim that $A \cong B\otimes_{\mathbb{R}}\mathbb{C}$ as a complex C*-algebra, indeed, as $B$ is a real UHF-algebra by Theorem \eqref{new theorem} we have $K_{0}(B)=K_{0}(B\otimes_{\mathbb{R}}\mathbb{C})$ and then since $K_{0}(A)=K_{0}(B)$ we have $K_{0}(A)=K_{0}(B\otimes_{\mathbb{R}}\mathbb{C})$, now  classification of complex UHF-algebra implies that $A \cong B\otimes_{\mathbb{R}}\mathbb{C}$, and this shows that for every complex UHF-algebra $A$ there is a real UHF-algebra whose complexification is $A$. Now we show that this real UHF-algebra  is unique up to isomorphism. Suppose that there is more than one real structures for a complex UHF-algebra $A$, we call them $B_{i}$. Apply Theorem \eqref{new theorem} we have $K_{0}(B_{i})=K_{0}(A_{\mathbb{C}})$ and then by the classification of real UHF-algebras ( Corollary \eqref{real UHF-classification}) we have $B_{i}$ are isomorphisms.

\end{proof}
\begin{lemma} \label{Ur tens Ur}
Let $U_{r}$ be the universal real UHF-algebra, then $U_{r} \otimes U_{r} \cong U_{r}. $
\end{lemma}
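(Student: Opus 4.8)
The plan is to reduce everything to the classification of real UHF-algebras by $K_0$, exactly as in the complex case treated in \cref{middle of uct}. First I would observe that $U_r \otimes U_r$ is again a real UHF-algebra. Writing $U_r$ as an inductive limit $\varinjlim M_{n_i}(\mathbb{R})$ and using that the tensor product commutes with inductive limits for these nuclear algebras, together with the elementary isomorphism $M_m(\mathbb{R}) \otimes_{\mathbb{R}} M_n(\mathbb{R}) \cong M_{mn}(\mathbb{R})$, one sees that $U_r \otimes U_r \cong \varinjlim M_{n_i^2}(\mathbb{R})$, which is by definition a real UHF-algebra. This is important because it is exactly the hypothesis that lets us invoke the classification in \cref{real UHF-classification}.

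Next I would compute $K_0(U_r \otimes U_r)$ and show it equals $K_0(U_r) = \mathbb{Q}$. Since $U_r \otimes U_r$ is a real UHF-algebra, \cref{Schroder} tells us its $K_0$ is of the form $\mathbb{Z}[1/s] \subseteq \mathbb{Q}$ for some supernatural number $s$; in particular it is a nonzero subgroup of $\mathbb{Q}$. On the other hand, applying \cref{R.free ab} with $A = U_r$ shows that $K^{\text{CRT}}_*(U_r \otimes U_r)$, and in particular its component $K_0(U_r \otimes U_r) = KO_0(U_r \otimes U_r)$, is a $\mathbb{Q}$-module, hence divisible. A nonzero divisible subgroup of $\mathbb{Q}$ must be all of $\mathbb{Q}$, so $K_0(U_r \otimes U_r) = \mathbb{Q} = K_0(U_r)$.

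Finally, with both algebras identified as real UHF-algebras having the same $K_0$-group, \cref{real UHF-classification} immediately yields $U_r \otimes U_r \cong U_r$, which is the claim.

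I expect the only genuinely delicate point to be the first step: verifying that the real tensor product of the two copies of $U_r$ is literally a real UHF-algebra in the sense required by \cref{real UHF-classification}, which rests on the compatibility of the real C*-tensor product with inductive limits and the identification $M_m(\mathbb{R}) \otimes_{\mathbb{R}} M_n(\mathbb{R}) \cong M_{mn}(\mathbb{R})$. Once the tensor product is recognized as a real UHF-algebra, the remainder is a routine divisibility argument followed by the classification theorem. As an alternative to invoking \cref{R.free ab}, one could compute $K_0$ purely in terms of supernatural numbers: the supernatural number of $U_r \otimes U_r$ is the product $s \cdot s$ of the supernatural number $s$ of $U_r$, and since $s$ already contains every prime with infinite multiplicity, $s \cdot s = s$, giving $K_0(U_r \otimes U_r) = \mathbb{Q}$ directly.
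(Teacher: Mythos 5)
Your proposal is correct, and it shares the paper's overall skeleton (compute $K_0(U_r\otimes U_r)$, then invoke the classification of real UHF-algebras by $K_0$), but the two key ingredients are genuinely different. The paper computes $K_0(U_r\otimes_{\mathbb R}U_r)$ by complexifying: it applies \cref{new theorem} to identify $K_0$ of a real UHF-algebra with $K_0$ of its complexification, and then reads off $\mathbb Q$ from the already-established complex case (\cref{middle of uct}); it then spends the rest of the proof re-deriving, via \cref{Schroder}, \cref{even map} and \cref{giordano}, the fact that $K_0$ suffices as a classifying invariant. You instead get the $K_0$ computation from divisibility --- $K^{\text{CRT}}_*(U_r\otimes U_r)$ is a $\mathbb Q$-module by \cref{R.free ab}, and a nonzero divisible subgroup of $\mathbb Q$ of the form $\mathbb Z[1/s]$ must be all of $\mathbb Q$ --- or, even more cheaply, from the supernatural-number identity $s\cdot s=s$; and you then cite \cref{real UHF-classification} directly rather than reproving it inline. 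Your route buys two things: it avoids any appeal to \cref{new theorem}, and it makes explicit the step the paper silently assumes, namely that $U_r\otimes_{\mathbb R}U_r$ is itself a real UHF-algebra (via $M_m(\mathbb R)\otimes_{\mathbb R}M_n(\mathbb R)\cong M_{mn}(\mathbb R)$ and continuity of the tensor product under inductive limits), which is genuinely needed before either \cref{Schroder} or the classification can be applied. The one thing the paper's detour buys in exchange is a full tabulation of $KO_*(U_r)$, which is reused elsewhere; your argument does not produce that as a by-product, but it is not needed for the statement at hand.
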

\begin{proof}
Applying Theorem \eqref{new theorem} we have $K_{0}(U_{r})=K_{0}(U_{r}\otimes_{\mathbb{R}} \mathbb{C} )$ and $K_{0}(U_{r} \otimes_{\mathbb{R}}U_{r})=K_{0}((U_{r}\otimes_{\mathbb{R}}U_{r})\otimes_{\mathbb{R}} \mathbb{C} )$, but since $K_{0}(U_{r}\otimes_{\mathbb{R}} \mathbb{C} )=K_{0}((U_{r}\otimes_{\mathbb{R}}U_{r})\otimes_{\mathbb{R}} \mathbb{C} )=\mathbb{Q}$, we have $$K_{0}(U_{r})=K_{0}(U_{r} \otimes_{\mathbb{R}}U_{r})=\mathbb{Q}.$$
Considering Lemma \eqref{Schroder} we have $KO_{0}(U_{r})=KO_{4}(U_{r})=\mathbb{Q}$ and $KO_{i}(U_{r})=0$ for $i=3, 5, 6, 7$, but on the other hand, since $U_{r}$ is a real UHF-algebra associated to the supernatural number which contains infinitely many even numbers, therefore by Lemma \eqref{even map} we have $KO_{1}(U_{r})=KO_{2}(U_{r})=0$. Now according to the Theorem \eqref{giordano} this kind of real UHF-algebras are classified only by $KO_{0}$. Thus by the classification since
\begin{align*}
K_{0}^{\text{CRT}}(U_{r} \otimes U_{r})= K_{0}^{\text{CRT}}(U_{r})= \mathbb{Q}
\end{align*}
we conclude that $U_{r} \otimes U_{r} \cong U_{r}. $
\end{proof}

\begin{definition}
Consider a collection of real C*-algebras and define the map $\chi_{2}^{R}(A):=m-n$ where $m$ and $n$ are defined as follows

\

$m$ $=$ $\mathbb{Q}$-dimension of $KO_{odd}(A \otimes U_{r})$ $-$ $\mathbb{Q}$-dimension of $KU_{odd}(A \otimes U_{r})$ $+$ $\mathbb{Q}$-dimension of $KT_{odd}(A \otimes U_{r})$,\

and

$n$ $=$ $\mathbb{Q}$-dimension of $KO_{even}(A \otimes U_{r})$ $-$ $\mathbb{Q}$-dimension of $KU_{even}(A \otimes U_{r})$ $+$ $\mathbb{Q}$-dimension of $KT_{even}(A \otimes U_{r})$.
\end{definition}
\begin{corollary}
Consider a collection of real C*-algebras, then $\chi_{2}^{R}$ defined above is additive.
\end{corollary}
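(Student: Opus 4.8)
The plan is to mirror the additivity argument already given for $\chi_1^R$, the one genuinely new ingredient being to first tensor the given short exact sequence by the universal real UHF-algebra $U_r$ in order to pass to rational coefficients at the level of united K-theory. So I would start from a short exact sequence $0 \to A \to C \to B \to 0$ of real C*-algebras in the collection and tensor it with $U_r$. Because $U_r$ is a real UHF-algebra, that is, an inductive limit of real matrix algebras and in particular nuclear, this tensoring is exact, and I obtain a short exact sequence $0 \to A \otimes U_r \to C \otimes U_r \to B \otimes U_r \to 0$.

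Next I would invoke Lemma \ref{R.free ab} to conclude that $K^{\text{CRT}}_*(A \otimes U_r)$, and likewise for $C$ and $B$, is a $\mathbb{Q}$-module, i.e. a $\mathbb{Q}$-vector space. This is exactly what makes the $\mathbb{Q}$-dimension a legitimate additive function $S := \dim_{\mathbb{Q}}$ in the sense of Definition \ref{functionS}: for a short exact sequence of $\mathbb{Q}$-vector spaces the outer dimensions sum to the middle one. This plays the role that ``tensoring with $\mathbb{Q}$ to recover the free parts'' played in the $\chi_1^R$ argument, where instead a finite generation hypothesis was imposed directly.

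Then I would apply Theorem \ref{cyclic} to the tensored sequence to produce the length-$24$ cyclic exact sequence, folded into its even/odd form, relating $K^{\text{CRT}}_{even}(\,\cdot\,\otimes U_r)$ and $K^{\text{CRT}}_{odd}(\,\cdot\,\otimes U_r)$ for $A$, $C$, and $B$. Feeding this cyclic sequence and the additive function $S = \dim_{\mathbb{Q}}$ into Lemma \ref{function} and then Lemma \ref{S} would yield the alternating identity stating that $\overline{S}(K^{\text{CRT}}_{odd}(C\otimes U_r)) - \overline{S}(K^{\text{CRT}}_{even}(C\otimes U_r))$ equals the sum of the corresponding expressions for $A \otimes U_r$ and $B \otimes U_r$. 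The final step is to unwind the even/odd bookkeeping and match it against the definition of $\chi_2^R$, using that a fixed integer-linear combination of additive quantities is again additive, which accommodates the signed combination $KO - KU + KT$ appearing in the numbers $m$ and $n$.

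The main obstacle I expect is purely bookkeeping: reconciling the bundled quantities $\overline{S}(K^{\text{CRT}}_{even/odd})$ emerging from Lemma \ref{S} with the specific signed, even/odd-split combination $KO - KU + KT$ that defines $\chi_2^R$. Since $\dim_{\mathbb{Q}}$ is additive on each graded summand separately and the CRT grading is respected throughout the cyclic sequence, I would isolate each of the three theories $KO$, $KU$, and $KT$ and treat the signed combination term by term. This is the same accounting as in the additivity proof for $\chi_1^R$, now carried out with every algebra replaced by its tensor product with $U_r$, so I anticipate no new conceptual difficulty beyond verifying that the multiplicities and signs align.
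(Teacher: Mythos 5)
Your proposal follows the paper's own proof essentially step for step: tensor the short exact sequence by $U_r$ using nuclearity, invoke Lemma \ref{R.free ab} to get $\mathbb{Q}$-modules, apply Theorem \ref{cyclic} to obtain the folded even/odd cyclic sequence, and finish with Lemma \ref{function} (your additional appeal to Lemma \ref{S} just makes the bookkeeping the paper leaves implicit slightly more explicit). The argument is correct and matches the paper's approach.
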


\begin{proof}
Consider a short exact sequence of real C*-algebras
 \begin{align}\label{seq-r}
0 \longrightarrow A \longrightarrow C \longrightarrow B \longrightarrow 0
\end{align}

 and suppose that $U_{r}$ is the universal real UHF-algebra, therefore since  UHF-algebras are nuclear,  tensoring sequence \eqref{seq-r} by $U_{r}$ contributes to again a short exact sequence as follows
  \begin{align*}
0 \longrightarrow A \otimes U_{r} \longrightarrow C \otimes U_{r} \longrightarrow B\otimes U_{r}  \longrightarrow 0
\end{align*}
now according to the Theorem \eqref{cyclic} there exists a six term cyclic exact sequence
\[
\begin{tikzcd}
K^{\text{CRT}}_{even}(A\otimes U_{r}) \ar{r} & K^{\text{CRT}}_{even}(C \otimes U_{r}) \ar{r} & K^{\text{CRT}}_{even}(B \otimes U_{r})\ar{d} \\
K^{\text{CRT}}_{odd}(B \otimes U_{r}) \ar{u} & K^{\text{CRT}}_{odd}(C \otimes U_{r}) \ar{l} & K^{\text{CRT}}_{odd}(A \otimes U_{r}) \ar{l}
\end{tikzcd}
\]
but according to the Lemma \eqref{R.free ab}, $K_{*}^{\text{CRT}}(A_{i}\otimes U_{r})$ is a $\mathbb{Q}$-modulegroup for every real C*-algebra $A_{i}$, now applying Lemma \eqref{function} implies that $$\chi_{2}^{R}(C)=\chi_{2}^{R}(B)+\chi_{2}^{R}(A).$$.
\end{proof}

\begin{theorem}
Let $A$ or $B$ be in $N_{\mathbb{R}}$ with finitely generated $K^{\text{CRT}}$, then $\chi_{1}^{R}$ is multiplicative,i.e,. $\chi_{1}^{R}(A \otimes B)=\chi_{1}^{R}(A) \chi_{1}^{R}(B)$.
\end{theorem}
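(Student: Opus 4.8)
The plan is to mirror the multiplicativity argument for the complex character $\chi_1$, replacing the ordinary K\"unneth sequence by the united K-theory K\"unneth theorem for real C*-algebras stated above. The character $\chi_1^R$ is, up to sign bookkeeping across the three theories $KO$, $KU$, $KT$ and the even/odd grading, an alternating count of free generators, so it is essentially an Euler-characteristic-type quantity. My strategy is to show that tensoring by $\mathbb{Q}$ kills all the torsion contributions, upgrades the K\"unneth sequence to an isomorphism $K^{\text{CRT}}_*(A)\otimes_{\text{CRT}} K^{\text{CRT}}_*(B)\otimes\mathbb{Q}\cong K^{\text{CRT}}_*(A\otimes B)\otimes\mathbb{Q}$, and then to check that the three-component, $\mathbb{Z}/8$-, $\mathbb{Z}/2$-, $\mathbb{Z}/4$-graded bookkeeping in \cref{real character} is exactly what turns this graded isomorphism into the scalar identity $\chi_1^R(A\otimes B)=\chi_1^R(A)\chi_1^R(B)$.

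First I would invoke the real K\"unneth theorem, which applies since $A$ or $B$ lies in $N_{\mathbb{R}}$, to obtain the natural short exact sequence with middle term $K^{\text{CRT}}_*(A\otimes B)$; the $\mathrm{Tor}_{\text{CRT}}$ term on the right is torsion, so tensoring the sequence with $\mathbb{Q}$ makes $\alpha$ an isomorphism of $\mathbb{Q}$-vector spaces (graded over $\text{CRT}$). Under the finite-generation hypothesis, ``number of free generators'' of a component equals its $\mathbb{Q}$-dimension after $\otimes\mathbb{Q}$, so $\chi_1^R$ may be computed entirely within rationalized united K-theory. Next I would record how $\otimes_{\text{CRT}}$ distributes over the grading: the component of $K^{\text{CRT}}_*(A)\otimes_{\text{CRT}} K^{\text{CRT}}_*(B)$ in total degree $k$ is built from the $\alpha_O,\alpha_U,\alpha_T$ products $KO_m\otimes KO_n$, $KU_m\otimes KU_n$, $KT_m\otimes KT_n$ with $m+n\equiv k$ in the appropriate period. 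The crucial formal point is that once we pass to $\mathbb{Q}$-dimensions, each of the three theories contributes a sum-of-products over its grading, and the even/odd split in \cref{real character} is designed so that the defining quantity $m-n$ factors as a product of the analogous quantities for $A$ and for $B$.

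The heart of the computation is therefore the bookkeeping step: I would write $\chi_1^R(A)=a_{\text{odd}}-a_{\text{even}}$ and $\chi_1^R(B)=b_{\text{odd}}-b_{\text{even}}$, where each of $a_\bullet,b_\bullet$ is the signed combination of $KO$, $KU$, $KT$ free-rank counts, and then verify that under the graded tensor isomorphism the total odd and even free ranks of $A\otimes B$ satisfy
\begin{align*}
(\text{odd rank of } A\otimes B)&=a_{\text{odd}}b_{\text{even}}+a_{\text{even}}b_{\text{odd}},\\
(\text{even rank of } A\otimes B)&=a_{\text{even}}b_{\text{even}}+a_{\text{odd}}b_{\text{odd}},
\end{align*}
which is precisely the statement that odd/even degree adds mod $2$ under the tensor product. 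Subtracting then gives $\chi_1^R(A\otimes B)=(a_{\text{even}}-a_{\text{odd}})(b_{\text{even}}-b_{\text{odd}})=\chi_1^R(A)\chi_1^R(B)$, matching the sign conventions in \cref{real character}.

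The main obstacle I anticipate is not the rationalization — that is routine, exactly as in the complex case — but rather confirming that the signed combination of $KO$, $KU$, $KT$ ranks is genuinely multiplicative rather than merely additive within each theory separately. The subtlety is that the $\text{CRT}$-tensor product $\otimes_{\text{CRT}}$ is \emph{not} the direct sum of three independent ordinary tensor products; the three theories $KO$, $KU$, $KT$ are linked by the operations (complexification, realification, the Bott maps) that make $K^{\text{CRT}}$ a module over $K^{\text{CRT}}_*(\mathbb{R})$. I would need to argue that after $\otimes\mathbb{Q}$ these linking relations either become vacuous on free ranks or are compatible with the signed count, so that the $\mathbb{Q}$-dimension of $K^{\text{CRT}}_{\text{odd}/\text{even}}(A\otimes B)$ really does decompose as the claimed bilinear expression in the three separate rank-data. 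Establishing this compatibility of the $\text{CRT}$-module structure with the rationalized rank bookkeeping is the step that carries the real content of the theorem; the remainder is the degree-parity arithmetic sketched above.
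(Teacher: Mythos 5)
Your proposal follows essentially the same route as the paper's proof: invoke the real K\"unneth sequence (valid since $A$ or $B$ lies in $N_{\mathbb{R}}$), tensor with $\mathbb{Q}$ to kill the $\mathrm{Tor}_{\text{CRT}}$ term and upgrade $\alpha$ to an isomorphism, split everything by even/odd parity, and read off multiplicativity from the bilinear parity bookkeeping applied to the signed rank count in the definition of $\chi_1^R$. The subtlety you flag at the end --- that $\otimes_{\text{CRT}}$ is not simply the componentwise tensor product of the $O$, $U$, $T$ parts, so the decomposition of $K^{\text{CRT}}_{*}(A)\otimes_{\text{CRT}}K^{\text{CRT}}_{*}(B)$ into even/odd pieces needs justification --- is a genuine one, but the paper's own proof does not address it either: it simply asserts that isomorphism without comment, so your sketch is at the same level of rigor as, and no less complete than, the published argument.
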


\begin{proof}
Let $A$ be in $N_{\mathbb{R}}$, then there is a short exact sequence
\begin{align*}
 0 \rightarrow K^{\text{CRT}}_{*}(A) \otimes_{\text{CRT}} K^{\text{CRT}}_{*}(B) \xrightarrow{\alpha}  K^{\text{CRT}}_{*}(A \otimes B) \xrightarrow{\beta} \mathrm{Tor}_{\text{CRT}}( K^{\text{CRT}}_{*}(A),K^{\text{CRT}}_{*}(B)) \rightarrow 0
 \end{align*}
 tensoring  by $\mathbb{Q}$ we have
  \begin{align*}
 (K^{\text{CRT}}_{*}(A) \otimes \mathbb{Q}) \otimes (K^{\text{CRT}}_{*}(B)\otimes \mathbb{Q}) \cong K^{\text{CRT}}_{*}(A \otimes B)\otimes \mathbb{Q}.
  \end{align*}
Let $K^{\text{CRT}}_{*}:= K^{\text{CRT}}_{even} \oplus K^{\text{CRT}}_{odd}$, then we have
 \begin{align*}
((K^{\text{CRT}}_{even}(A) \oplus K^{\text{CRT}}_{odd}(A) )\otimes \mathbb{Q}) \otimes ((K^{\text{CRT}}_{even}(B) \oplus K^{\text{CRT}}_{odd}(B) )\otimes \mathbb{Q}) \cong (K^{\text{CRT}}_{even}(A \otimes B) \oplus K^{\text{CRT}}_{odd}(A \otimes B) )\otimes \mathbb{Q}
  \end{align*}
which implies

  \begin{align*}
 (( K^{\text{CRT}}_{even}(A) \otimes K^{\text{CRT}}_{even}(B)) \otimes \mathbb{Q}) \oplus  (( K^{\text{CRT}}_{odd}(A) \otimes K^{\text{CRT}}_{odd}(B))  \otimes \mathbb{Q}) \cong K^{\text{CRT}}_{even}(A \otimes B)  \otimes \mathbb{Q}
  \end{align*}
  and

  \begin{align*}
 (( K^{\text{CRT}}_{even}(A) \otimes K^{\text{CRT}}_{odd}(B)) \otimes \mathbb{Q}) \oplus  (( K^{\text{CRT}}_{odd}(A) \otimes K^{\text{CRT}}_{even}(B)) \otimes
 \mathbb{Q}) \cong K^{\text{CRT}}_{odd}(A \otimes B) \otimes \mathbb{Q}
  \end{align*}
  This shows that the free part of $K^{\text{CRT}}_{even} (A \otimes B) $ is isomorphic as shown to a subspace of the free part of $K^{\text{CRT}}(A) \otimes K^{\text{CRT}}(B)$. Now by applying these two isomorphisms and definition of $\chi_{1}^{R}$ we have $\chi_{1}^{R}(A \otimes B)=\chi_{1}^{R}(A) \chi_{1}^{R}(B)$.
\end{proof}

\begin{theorem}
Let $A$ and $B$ be real C*-algebras and suppose that $A$ is in $N_{\mathbb{R}}$, then $\chi_{2}^{R}(A\tensor B)=\chi_{2}^{R}(A)\chi_{2}^{R}(B).$
\end{theorem}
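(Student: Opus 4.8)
The plan is to imitate the arguments already used for the complex character $\chi_{2}$ and for the additivity of $\chi_{2}^{R}$, replacing the complex universal UHF-algebra by its real analogue $U_{r}$ and the complex K\"unneth theorem by the K\"unneth theorem for united K-theory. First I would exploit that $U_{r}$ is a tensor idempotent: by Lemma~\ref{Ur tens Ur} we have $U_{r}\otimes U_{r}\cong U_{r}$, and since real UHF-algebras are nuclear the tensor product is commutative and associative, so
\[
(A\otimes U_{r})\otimes(B\otimes U_{r})\;\cong\;(A\otimes B)\otimes(U_{r}\otimes U_{r})\;\cong\;(A\otimes B)\otimes U_{r}.
\]
Because $\chi_{2}^{R}$ depends only on the united K-theory of the $(-)\otimes U_{r}$ construction, this identity reduces the computation of $\chi_{2}^{R}(A\otimes B)$ to a K\"unneth computation for the pair $A\otimes U_{r}$ and $B\otimes U_{r}$.

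Next I would apply the K\"unneth theorem for real C*-algebras to this pair. Since $A\in N_{\mathbb{R}}$ and $U_{r}$ also lies in $N_{\mathbb{R}}$ (it is an inductive limit of the type I algebras $M_{n}(\mathbb{R})$), and $N_{\mathbb{R}}$ is closed under tensor products, the algebra $A\otimes U_{r}$ belongs to $N_{\mathbb{R}}$, so the K\"unneth sequence is available. The decisive simplification comes from Lemma~\ref{R.free ab}: both $K^{\text{CRT}}_{*}(A\otimes U_{r})$ and $K^{\text{CRT}}_{*}(B\otimes U_{r})$ are $\mathbb{Q}$-modules, hence torsion free, so the $\mathrm{Tor}_{\text{CRT}}$ term vanishes and $\alpha$ becomes an isomorphism
\[
K^{\text{CRT}}_{*}(A\otimes U_{r})\otimes_{\text{CRT}}K^{\text{CRT}}_{*}(B\otimes U_{r})\;\cong\;K^{\text{CRT}}_{*}\big((A\otimes B)\otimes U_{r}\big).
\]

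I would then split this isomorphism into its even and odd graded components, exactly as in the multiplicativity proof for $\chi_{1}^{R}$, writing $E_{X}:=K^{\text{CRT}}_{even}(X\otimes U_{r})$ and $O_{X}:=K^{\text{CRT}}_{odd}(X\otimes U_{r})$ and obtaining
\[
E_{A\otimes B}\cong (E_{A}\otimes E_{B})\oplus(O_{A}\otimes O_{B}),\qquad
O_{A\otimes B}\cong (E_{A}\otimes O_{B})\oplus(O_{A}\otimes E_{B}).
\]
Passing to $\mathbb{Q}$-dimensions and weighting the three constituent theories $KO$, $KU$, $KT$ with the signs $+,-,+$ prescribed in the definition of $\chi_{2}^{R}$, the quantity $\chi_{2}^{R}$ is recognised as an Euler-characteristic (super-dimension) attached to a single $\mathbb{Z}/2$-graded rational vector space. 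Multiplicativity then follows from the standard fact that the super-dimension is multiplicative under a graded tensor product.

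The step I expect to be the main obstacle is precisely this last, algebraic one. One must verify that, on the free (rational) parts, $\otimes_{\text{CRT}}$ really does decompose as a clean $\mathbb{Z}/2$-graded tensor product of the $KO$, $KU$, $KT$ data, since $\otimes_{\text{CRT}}$ is the tensor product in the category CRT and is \emph{not} simply the degreewise tensor product of the three theories taken in isolation; the distinct periodicities ($8$, $2$, $4$) and the operations of CRT relating $KO$, $KU$ and $KT$ must be tracked so that the even/odd bookkeeping is consistent. Equally delicate is fixing the sign convention, since the super-dimension that is genuinely multiplicative is the even-minus-odd one; careful attention to whether $\chi_{2}^{R}$ is this super-dimension or its negative is needed in order to land on the exact identity $\chi_{2}^{R}(A\otimes B)=\chi_{2}^{R}(A)\chi_{2}^{R}(B)$ rather than on a sign-twisted version. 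Once the graded-tensor compatibility of $\alpha$ is confirmed rationally, the conclusion is formal.
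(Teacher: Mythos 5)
Your proposal follows essentially the same route as the paper's own proof: tensor with $U_r$, use $U_r\otimes U_r\cong U_r$ to identify $(A\otimes U_r)\otimes(B\otimes U_r)$ with $(A\otimes B)\otimes U_r$, invoke the real K\"unneth sequence for $A\otimes U_r\in N_{\mathbb{R}}$, kill the $\mathrm{Tor}$ term via the $\mathbb{Q}$-module property of $K^{\text{CRT}}_*(\,\cdot\,\otimes U_r)$, and read off the result from the even/odd decomposition. The subtlety you flag about whether $\otimes_{\text{CRT}}$ genuinely decomposes degreewise into even/odd pieces of the three constituent theories is a fair concern, but the paper's proof asserts the same decomposition without further justification, so your argument is no less complete than the original.
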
	

\begin{proof}
Since $N_{\mathbb{R}}$ is closed under tensor product, by hypothesis $A \otimes U_{r}$ is in $N_{\mathbb{R}}$, then using Lemma \eqref{Ur tens Ur} contributes a short exact sequences

 \begin{multline*}0 \rightarrow K^{\text{CRT}}_{even}(A \otimes U_{r}) \otimes K^{\text{CRT}}_{even}(B \otimes U_{r}) \oplus K^{\text{CRT}}_{odd}(A \otimes U_{r}) \otimes ^{\text{CRT}}_{odd}(B \otimes U_{r}) \rightarrow K^{\text{CRT}}_{even}((A \otimes B)\otimes U_{r}) \rightarrow\\ \mathrm{Tor}(K^{\text{CRT}}_{even}(A\otimes U_{r}),K^{\text{CRT}}_{odd}(B \otimes U_{r})) \oplus \mathrm{Tor}(K^{\text{CRT}}_{odd}(A \otimes U_{r}),K^{\text{CRT}}_{even} (B\otimes U_{r})) \rightarrow 0
 \end{multline*}
 and
 \begin{multline*}0 \rightarrow K^{\text{CRT}}_{even}(A \otimes U_{r}) \otimes K^{\text{CRT}}_{odd}(B \otimes U_{r}) \oplus K^{\text{CRT}}_{odd}(A \otimes U_{r}) \otimes K^{\text{CRT}}_{even}(B \otimes U_{r}) \rightarrow K^{\text{CRT}}_{odd}((A \otimes B)\otimes U_{r}) \rightarrow\\ \mathrm{Tor}(K^{\text{CRT}}_{even}(A\otimes U_{r}),K^{\text{CRT}}_{even}(B \otimes U_{r})) \oplus \mathrm{Tor}(K^{\text{CRT}}_{odd}(A \otimes U_{r}),K^{\text{CRT}}_{odd}(B \otimes U_{r})) \rightarrow 0
 \end{multline*}
 but $K_*^{\text{CRT}}(A_{i} \otimes U_{r})$ is torsion free for every real C*-algebra $A_{i}$, thus we have two isomorphisms
 $$K^{\text{CRT}}_{even} (A\otimes U) \otimes K^{\text{CRT}}_{even} (B\otimes U) \oplus K^{\text{CRT}}_{odd}(A\otimes U)\otimes K^{\text{CRT}}_{odd}(B\otimes U) = K^{\text{CRT}}_{even} ((A \otimes B)\otimes U),$$
 and
 $$K^{\text{CRT}}_{even} (A\otimes U) \otimes K^{\text{CRT}}_{odd}(B\otimes U \oplus (K^{\text{CRT}}_{odd}(A\otimes U)\otimes K^{\text{CRT}}_{even} (B\otimes U) = K^{\text{CRT}}_{odd}((A \otimes B)\otimes U)$$
as before again since isomorphisms preserve the number of free generators so by applying these two isomorphisms and definition of $\chi_{2}$ we get $\chi_{2}^{R}(A\tensor B)=\chi_{2}^{R}(A)\chi_{2}^{R}(B).$
\end{proof}

\begin{lemma} \label{isomorphism}
Consider the real C*-algebras $T=\{f \in C([0,1], \mathbb{C}) : f(0)=\overline{f(1)}\}$ and $C(T,n_1) = \{ g \in C(S^{1}, \mathbb{C}): g(-x)=\overline{g(x)} \ \text{for every}  \ x \in S^{1}\}$, where $S^{1}$ is the unit circle. Then $C(T,n_1)$ is isomorphic to $T$ as a real C*-algebra, and the their complexifications are isomorphic to $C(S^1)$ as a complex $C^{*}$-algebra.
\end{lemma}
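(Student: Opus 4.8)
The plan is to treat the two assertions separately: first I would establish the real $C^*$-isomorphism $C(T,n_1)\isom T$ by reparametrising the circle, and then compute the common complexification by exhibiting $C(T,n_1)$ as the fixed-point real form of $C(S^1)$ under an explicit conjugate-linear involution.

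For the first claim I would parametrise $S^1$ by $\theta\mapsto e^{i\theta}$ and send $g\in C(T,n_1)$ to the function $f$ on $[0,1]$ given by $f(t):=g(e^{i\pi t})$. The antipode of $e^{i\theta}$ is $e^{i(\theta+\pi)}$, so the defining relation $g(-x)=\overline{g(x)}$ forces, at the endpoints, $g(e^{i\pi})=\overline{g(1)}$, that is $f(1)=\overline{f(0)}$, equivalently $f(0)=\overline{f(1)}$, which is precisely the condition defining $T$; hence $\Phi(g):=f$ lands in $T$. The map $\Phi$ is visibly $\R$-linear, multiplicative and $*$-preserving (both involutions being pointwise conjugation), so it is a homomorphism of real $C^*$-algebras. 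I would produce its inverse explicitly by setting $g(e^{i\theta})=f(\theta/\pi)$ for $\theta\in[0,\pi]$ and $g(e^{i\theta})=\overline{f((\theta-\pi)/\pi)}$ for $\theta\in[\pi,2\pi]$, and then check that the $T$-condition on $f$ makes the two formulas agree at $\theta=\pi$ and at $\theta=2\pi\equiv 0$, so that $g$ is continuous and satisfies $g(-x)=\overline{g(x)}$. Since a bijective $*$-homomorphism of $C^*$-algebras is automatically isometric, $\Phi$ is an isomorphism.

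For the second claim I would define $\tau\colon C(S^1)\to C(S^1)$ by $\tau(g)(x):=\overline{g(-x)}$ and verify that $\tau$ is a conjugate-linear, involutive, multiplicative $*$-automorphism of the complex $C^*$-algebra $C(S^1)$. Its fixed-point set $\{g:\tau(g)=g\}$ equals $\{g:g(-x)=\overline{g(x)}\}=C(T,n_1)$, so $C(T,n_1)$ is the real form of $C(S^1)$ determined by $\tau$ (equivalently the real structure attached to the involutive $*$-anti-automorphism $\alpha(g)=\tau(g)^*$, in the sense recalled at the beginning of the section on real $C^*$-algebras). The standard fact that the complexification of such a real form recovers the ambient complex algebra then yields $C(T,n_1)\tensor_{\R}\C\isom C(S^1)$; concretely, $a\otimes\lambda\mapsto\lambda a$ is an isomorphism, surjectivity coming from the decomposition $g=\tfrac{1}{2}(g+\tau g)+i\cdot\tfrac{1}{2i}(g-\tau g)$ into $\tau$-fixed parts and injectivity from applying $\tau$. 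Since complexification is functorial and $T\isom C(T,n_1)$ by the first part, we also get $T\tensor_{\R}\C\isom C(S^1)$.

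The genuinely delicate points, as opposed to the routine algebra, are twofold: keeping the two conjugations apart, since the scalar field $\C$ of the complexification is not the $\C$ in which the functions take values; and recognising that the involution $\tau$ implementing the real structure must be conjugate-linear, for a $\C$-linear involution with the same fixed points would split $C(S^1)$ as a direct sum rather than realise it as a complexification and would give the wrong answer. Verifying the anti-linearity of $\tau$ and the matching of the boundary data at the gluing points $\theta=\pi$ and $\theta=2\pi$ is where I expect the argument to require the most care.
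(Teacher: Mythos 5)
Your proposal is correct and follows essentially the same route as the paper: identify $C(T,n_1)$ with $T$ by restricting to a half-circle and extending back by the reflection-conjugation symmetry, and realise $C(T,n_1)$ as the fixed-point real form of $C(S^1)$ under the conjugate-linear involution $g(x)\mapsto\overline{g(-x)}$ so that the complexification recovers $C(S^1)$. Your write-up is in fact more careful than the paper's (explicit parametrisation, verification of the gluing at $\theta=\pi$ and $\theta=2\pi$, and the decomposition $g=g_++ig_-$ proving that $a\otimes\lambda\mapsto\lambda a$ is bijective), but the underlying argument is the same.
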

\begin{proof}
Consider $C(S^{1})$ the set of all continuous functions $f:S^{1} \rightarrow \mathbb{C}$, using the antilinear involution

\begin{align*}
\alpha : C(S^{1}) \ni f(x) \mapsto  \overline{f(-x) }\in C(S^{1})
\end{align*}
where $x \in S^{1}$, we get the real subalgebra associated to $\alpha$  denoted by
\begin{align*}
C(T,n_1) &= \{ g \in C(S^{1}, \mathbb{C}): \alpha(g(x))= g(x)\} \\
& = \{ g \in C(S^{1}, \mathbb{C}): \overline{g(-x)}= g(x)\}.
\end{align*}
In a nutshell, we recovered the real C*-algebra $C(T,n_1)$, from the complex C*-algebra $C(S^{1})$, and thus $C(T,n_1) \otimes_{\mathbb{R}} \mathbb{C} = C(S^{1})$ as a complex C*-algebra, which means that the complexification of this real C*-algebra is the complex C*-algebra $C(S^{1})$.

Now let us to show that $C(T,n_1)$ is isomorphic to $T$. Suppose that $\varphi: C(T,n_1) \rightarrow T $ for which the domain is restricted to $[\pi,0]$ the lower half of the unit circle. But by the definition of $C(T,n_1)$ we have $g(\pi)=\overline{g(0)}$, for every $g$ in the lower half of the unit circle. And this is nothing but $T$. On the other hand, define the map $\psi : T \rightarrow C(T,n_1)$ by
\begin{align*}
\psi(f(x)) = g(x)  := \left\{ \begin{array}{cc}
                f(x) & x \in [0, \pi] \\
                                               \\
               \overline{f(-x)} &  x \in [\pi,0] \\
                \end{array} \right.
\end{align*}
  therefore $C(T,n_1) =T$, and $C(S^{1}) = C(T,n_{1}) \otimes_{\mathbb{R}} \mathbb{C}=T \otimes_{\mathbb{R}} \mathbb{C}$.
\end{proof}

\begin{theorem}\label{theorem 8.10}
Let $A$ be a complex C*-algebra with a finitely generated K-theory and it is in the UCT class, then $\chi_{\mathbb{R}}(A)=2 \chi_{\mathbb{C}}(A)$, where $\chi_{\mathbb{R}}$ is the character for Grothendieck ring of the real C*-algebra , and  $\chi_{\mathbb{C}}$ is the character for Grothendieck ring of complex C*-algebra.
\end{theorem}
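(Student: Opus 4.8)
The plan is to push everything through the united $K$-theory of $A$ regarded as a real C*-algebra and then to count free ranks. Write $n_0$ and $n_1$ for the ranks of the free parts of the complex groups $K_0(A)$ and $K_1(A)$, so that by definition $\chi_{\mathbb{C}}(A)=n_1-n_0$. The first move is to identify the three constituents of $K^{\text{CRT}}(A)$ for the realified algebra. Since the complexification of the realification of a complex algebra satisfies $A\otimes_{\mathbb R}\mathbb C\isom A\oplus\overline A$, we get immediately $KU_n(A)=K_n(A)\oplus K_n(\overline A)\isom K_n(A)\oplus K_n(A)$. For the other two, I would invoke Boersema's description \cite{Boersema} of the united $K$-theory of a realified complex algebra, namely $KO_n(A)\isom K_n(A)$ and $KT_n(A)\isom K_n(A)\oplus K_{n-1}(A)$, or else derive these from the standard realification--complexification relations $rc=2$ and $cr=1+(\text{conjugation})$. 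A crucial feature is that all three theories are then $2$-periodic because complex $K$-theory is, so that $K_{-1}=K_1$ and the even/odd bookkeeping collapses nicely.

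The second step is a pure rank count, reading off the relevant degrees from Bott periodicity. From $KO_n(A)\isom K_n(A)$ one finds $\operatorname{rk}KO_{\mathrm{even}}=4n_0$ and $\operatorname{rk}KO_{\mathrm{odd}}=4n_1$, since within one period of $8$ the four even degrees each contribute $n_0$ and the four odd degrees each contribute $n_1$. From $KU_n(A)\isom K_n(A)^2$ one gets $\operatorname{rk}KU_{\mathrm{even}}=2n_0$ and $\operatorname{rk}KU_{\mathrm{odd}}=2n_1$, recalling that $KU_{\mathrm{even}}=KU_0$ and $KU_{\mathrm{odd}}=KU_1$. From $KT_n(A)\isom K_n(A)\oplus K_{n-1}(A)$ each of the degrees $0,1,2,3$ contributes one copy of $K_0$ and one of $K_1$, whence $\operatorname{rk}KT_{\mathrm{even}}=\operatorname{rk}KT_{\mathrm{odd}}=2(n_0+n_1)$.

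The final step is to substitute into \cref{real character}. The odd total is $m=\operatorname{rk}KO_{\mathrm{odd}}-\operatorname{rk}KU_{\mathrm{odd}}+\operatorname{rk}KT_{\mathrm{odd}}=4n_1-2n_1+2(n_0+n_1)=2n_0+4n_1$, and the even total is $n=\operatorname{rk}KO_{\mathrm{even}}-\operatorname{rk}KU_{\mathrm{even}}+\operatorname{rk}KT_{\mathrm{even}}=4n_0-2n_0+2(n_0+n_1)=4n_0+2n_1$. Therefore $\chi_{\mathbb{R}}(A)=m-n=2(n_1-n_0)=2\chi_{\mathbb{C}}(A)$, which is the claim. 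The finite generation and UCT hypotheses are used precisely to guarantee that both characters are defined and that $A$, viewed as a real algebra, sits in the class on which $\chi_{\mathbb{R}}$ is a well-defined character.

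I expect the only genuine obstacle to lie in the first step: pinning down $KO_n(A)\isom K_n(A)$ and especially $KT_n(A)\isom K_n(A)\oplus K_{n-1}(A)$ for a realified complex algebra, since the $KU$ computation is elementary but the $KO$ and $KT$ identifications require the structure theory of united $K$-theory. Once these are in hand the theorem reduces to the arithmetic above. As a preliminary sanity check I would verify the smallest case $A=\mathbb{C}$, where $\chi_{\mathbb{C}}(\mathbb{C})=-1$ and the bookkeeping above yields $\chi_{\mathbb{R}}(\mathbb{C})=-2$, matching the asserted factor of $2$.
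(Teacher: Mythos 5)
Your proposal is correct and follows essentially the same route as the paper: identify $KO_n(A)\cong K_n(A)$, $KU_n(A)\cong K_n(A)\oplus K_n(A)$ (via $\mathbb C\otimes_{\mathbb R}\mathbb C\cong\mathbb C\oplus\mathbb C$), and $KT_n(A)\cong K_n(A)\oplus K_{n+1}(A)$, then count free ranks in the even/odd decomposition to get contributions $4(n_1-n_0)$, $2(n_1-n_0)$, and $0$ respectively. The only cosmetic difference is that where you defer the $KT$ identification to Boersema's structure theory, the paper derives it concretely from the isomorphism $T\otimes_{\mathbb R}A\cong C(S^1,A)$ established in its Lemma \ref{isomorphism}.
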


\begin{proof}
By Lemma \eqref{isomorphism} we have
\begin{align*}
C(S^{1}) = C(T,n_{1}) \otimes_{\mathbb{R}} \mathbb{C}=T \otimes_{\mathbb{R}} \mathbb{C}
\end{align*}
therefore
 \begin{align*}
C(S^{1}) \otimes_{\mathbb{C}} A= (T \otimes_{\mathbb{R}} \mathbb{C}) \otimes_{\mathbb{C}} A
\end{align*}
which implies
\begin{align*}
C(S^{1},A) = T \otimes_{\mathbb{R}} A
\end{align*}
but
\begin{align*}
K_{n}(C(S^{1},A))=K_{n}(A) \oplus K_{n+1}(A),
\end{align*}
which implies
\begin{align} \label{T tensor A}
K_{n}(T \otimes_{\mathbb{R}} A )=K_{n}(A) \oplus K_{n+1}(A).
\end{align}

Recall that $\chi_{\mathbb{C}}(A)=m-n$, where $m$ is the number of free generators of $K_{1}(A)$, and $n$ is the number of free generators of $K_{0}(A)$. Since $A$ is a complex $C*-algebra, KO_{n}(A)=K_{n}(A)$, then using Definition \eqref{real character} we have:
\begin{quotation}
the number of free generators of $KO_{odd}(A)$ - the number of free generators of $KO_{even}(A)$ = $4(m-n)$.
\end{quotation}
On the other hand by \eqref{T tensor A} we have:
\begin{quotation}
the number of free generators of $KT_{odd}(A)$ - the number of free generators of $KT_{even}(A)$ = 0.
\end{quotation}
We recall that $KU_{n}(A)=K_{n}(A \otimes_{\mathbb{R}} C)$. Thus
\begin{align*}
KU_{n}(A)=K_{n}(A \otimes_{\mathbb{R}}\mathbb{C}) &=K_{n}((A \otimes_\mathbb{C} \mathbb{C})\otimes_{\mathbb{R}} \mathbb{C}) \\
&=K_{n}(A \otimes_\mathbb{C} (\mathbb{C}\otimes_{\mathbb{R}} \mathbb{C})) \\
&= K_{n}(A \otimes_\mathbb{C} (\mathbb{C} \oplus \mathbb{C})) \\
&=2K_{n}(A \otimes_\mathbb{C} \mathbb{C})\\
&=2K_{n}(A)
\end{align*}
Now we have
\begin{align*}
\chi_{\mathbb{R}}(A)= 4(m-n)-2(m-n)=2 \chi_{\mathbb{C}}(A).
\end{align*}
\end{proof}

\begin{lemma} \label{lemma 8.11}
Let $A$ be a complex $C^{*}$-algebra for which there exists a real structure $A_{Re}$, then $\chi_{\mathbb{R}}(A)=2 \chi_{\mathbb{R}}(A_{Re})$.
\end{lemma}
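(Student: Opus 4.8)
The plan is to funnel both sides of the equation through the complex $K$-theory of $A$, exploiting the fact that a real structure is nothing but a real form. Since $A_{Re}$ is a real structure on $A$, its complexification is $A$, so $A\cong A_{Re}\otimes_{\mathbb{R}}\mathbb{C}$ as complex C*-algebras, and hence also as real C*-algebras once we forget the complex structure on $A$. I will keep the standing hypotheses needed for the characters to be defined (finitely generated $KO,KU,KT$, with $A$ in the UCT class). By the preceding theorem \cref{theorem 8.10} (the identity $\chi_{\mathbb{R}}=2\chi_{\mathbb{C}}$, which applies to $A$ as a complex algebra with finitely generated $K$-theory in the UCT class) we already have $\chi_{\mathbb{R}}(A)=2\chi_{\mathbb{C}}(A)$, so the whole statement reduces to the single identity $\chi_{\mathbb{C}}(A)=\chi_{\mathbb{R}}(A_{Re})$.

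Write $\mathrm{rk}(G)$ for the number of free generators of a finitely generated abelian group $G$. Because $A_{Re}\otimes_{\mathbb{R}}\mathbb{C}\cong A$, the complex part of the united $K$-theory of $A_{Re}$ is exactly the complex $K$-theory of $A$: that is, $KU_*(A_{Re})=K_*(A_{Re}\otimes_{\mathbb{R}}\mathbb{C})=K_*(A)$. Therefore $\chi_{\mathbb{C}}(A)=\mathrm{rk}(K_1(A))-\mathrm{rk}(K_0(A))=\mathrm{rk}(KU_{odd}(A_{Re}))-\mathrm{rk}(KU_{even}(A_{Re}))$, and the problem collapses to the following \emph{master identity}, which I would establish for an arbitrary real C*-algebra $B$ with finitely generated $K^{\text{CRT}}$:
$$\chi_{\mathbb{R}}(B)=\mathrm{rk}(KU_{odd}(B))-\mathrm{rk}(KU_{even}(B)).$$
In words: although $\chi_{\mathbb{R}}$ is built from all three theories $KO$, $KU$, $KT$, rationally only the $KU$-part survives.

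I would prove the master identity by rationalizing and comparing the three Euler characteristics $e_{KO}(B):=\mathrm{rk}(KO_{even})-\mathrm{rk}(KO_{odd})$, $e_{KU}(B):=\mathrm{rk}(KU_{even})-\mathrm{rk}(KU_{odd})$, and $e_{KT}(B):=\mathrm{rk}(KT_{even})-\mathrm{rk}(KT_{odd})$, noting that by \cref{real character} one has $\chi_{\mathbb{R}}(B)=-e_{KO}(B)+e_{KU}(B)-e_{KT}(B)$. Two facts then finish it: $e_{KT}(B)=0$ and $e_{KO}(B)=2\,e_{KU}(B)$, since together they give $\chi_{\mathbb{R}}(B)=-2e_{KU}+e_{KU}=-e_{KU}=\mathrm{rk}(KU_{odd})-\mathrm{rk}(KU_{even})$. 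Both come from the natural transformations of Boersema's $\text{CRT}$ structure. After tensoring with $\mathbb{Q}$ the complexification map $c$ is injective onto the conjugation invariants, whence $\mathrm{rk}(KO_n)=\dim_{\mathbb{Q}}(KU_n\otimes\mathbb{Q})^{\psi}$; using that conjugation $\psi$ anticommutes with Bott periodicity, the period-$8$-versus-$2$ bookkeeping collapses $e_{KO}$ to exactly twice $e_{KU}$. Dually, the self-conjugate long exact sequence relating $KT$ and $KU$ through $1-\psi$ shows, rationally, that $KT_{even}$ and $KT_{odd}$ carry equal free rank, i.e. $e_{KT}=0$.

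Assembling the pieces, the master identity applied to $B=A_{Re}$ gives $\chi_{\mathbb{R}}(A_{Re})=\mathrm{rk}(KU_{odd}(A_{Re}))-\mathrm{rk}(KU_{even}(A_{Re}))=\chi_{\mathbb{C}}(A)$, and combined with \cref{theorem 8.10} this yields $\chi_{\mathbb{R}}(A)=2\chi_{\mathbb{C}}(A)=2\chi_{\mathbb{R}}(A_{Re})$, as claimed. (As a consistency check one can instead apply the master identity directly to $A$ regarded as a real algebra, together with $KU_*(A)=K_*(A\otimes_{\mathbb{R}}\mathbb{C})\cong K_*(A)\oplus K_*(A)$; the doubling produces the factor $2$ and reproves \cref{theorem 8.10}.) The main obstacle is precisely the master identity, i.e. the two rational cancellations $e_{KT}=0$ and $e_{KO}=2e_{KU}$; everything else is formal. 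I would also warn against the superficially shorter route $\chi_{\mathbb{R}}(A)=\chi_{\mathbb{R}}(A_{Re})\,\chi_{\mathbb{R}}(\mathbb{C})$: the rank characters are multiplicative only up to the K\"unneth sign, so one must pair that sign with the value $\chi_{\mathbb{R}}(\mathbb{C})$ to arrive at $+2$ rather than $-2$, which is exactly the bookkeeping the direct computation above makes transparent.
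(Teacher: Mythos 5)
Your proof is correct in substance, but it takes a genuinely different route from the paper. The paper's argument is a one-liner: $\chi_{\mathbb{R}}(A)=\chi_{\mathbb{R}}(A_{Re}\otimes_{\mathbb{R}}\mathbb{C})=\chi_{\mathbb{R}}(A_{Re})\,\chi_{\mathbb{R}}(\mathbb{C})=2\chi_{\mathbb{R}}(A_{Re})$ --- exactly the ``superficially shorter route'' you warn against --- invoking the multiplicativity theorem for $\chi_{1}^{R}$ together with the value $\chi_{\mathbb{R}}(\mathbb{C})=2$. That argument is short but fragile: it silently assumes the hypotheses of the multiplicativity theorem ($A_{Re}\in N_{\mathbb{R}}$, finitely generated $K^{\text{CRT}}$), and it is sensitive to the sign conventions, which are not consistent across the paper (with the literal Definition \ref{real character} one computes $\chi_{\mathbb{R}}(\mathbb{R})=-1$ and $\chi_{\mathbb{R}}(\mathbb{C})=-2$, whereas the unit of the tensor product must have character $+1$ for multiplicativity to hold; the example following the lemma uses the opposite sign). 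Your route bypasses multiplicativity entirely: you reduce everything to the rational ``master identity'' $\chi_{\mathbb{R}}(B)=\pm\bigl(\mathrm{rk}\,KU_{even}(B)-\mathrm{rk}\,KU_{odd}(B)\bigr)$, proved from the two cancellations $e_{KT}=0$ and $e_{KO}=2e_{KU}$, and then feed in $KU_*(A_{Re})=K_*(A)$ and $KU_*(A)=K_*(A)\oplus K_*(A)$. Both cancellations are genuine consequences of Boersema's CRT structure (rationally $KO_n$ is the $\psi$-invariant part of $KU_n$, $\psi$ negates the Bott class, and the $\eta$-$\zeta$ sequence handles $KT$); they are only sketched in your write-up and would need a few lines of eigenspace bookkeeping to be complete, but they are true and checkable on $\mathbb{R}$, $\mathbb{C}$, $\mathbb{H}$. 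What your approach buys is robustness to the sign ambiguity (both sides of the claimed identity flip together), a transparent explanation of the factor $2$ (complexifying a complex algebra doubles $KU$), and a simultaneous reproof of Theorem \ref{theorem 8.10}; what it costs is that the two rational cancellation lemmas must be established, which is most of the work.
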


\begin{proof}
$\chi_{\mathbb{R}}(A)= \chi_{\mathbb{R}}(A_{Re} \otimes_{\mathbb{R}} \mathbb{C})=\chi_{\mathbb{R}}(A_{Re}) \chi_{\mathbb{R}}(\mathbb{C})= 2 \chi_{\mathbb{R}}(A_{Re})$.
\end{proof}

\begin{corollary}
Let $A$ be a complex $C^{*}$-algebra for which there exists a real structure $A_{Re}$, then $\chi_{\mathbb{C}}(A)=\chi_{\mathbb{R}}(A_{Re})$.
\end{corollary}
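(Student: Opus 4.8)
The plan is to derive this as an immediate consequence of the two preceding results, \cref{theorem 8.10} and \cref{lemma 8.11}, both of which express $\chi_{\mathbb R}(A)$ in terms of a factor of two times a simpler quantity. First I would invoke \cref{theorem 8.10}, which gives
\[
\chi_{\mathbb R}(A) = 2\,\chi_{\mathbb C}(A),
\]
and then invoke \cref{lemma 8.11}, applied to the same complex C*-algebra $A$ with its given real structure $A_{Re}$, which gives
\[
\chi_{\mathbb R}(A) = 2\,\chi_{\mathbb R}(A_{Re}).
\]
Since the left-hand sides coincide, I would equate the two right-hand sides to obtain $2\,\chi_{\mathbb C}(A) = 2\,\chi_{\mathbb R}(A_{Re})$, and then cancel the common factor of $2$ to conclude $\chi_{\mathbb C}(A) = \chi_{\mathbb R}(A_{Re})$. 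The characters take values in $\mathbb Z$ (or at any rate in a group where division by $2$ is unambiguous), so the cancellation is legitimate.

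The only genuine subtlety, and the step I would be most careful about, is that \cref{theorem 8.10} carries the standing hypotheses that $A$ has finitely generated K-theory and lies in the UCT class, whereas the statement of the corollary mentions only the existence of a real structure. I would therefore make explicit that these hypotheses are understood to be inherited from the ambient setting in which the characters $\chi_{\mathbb C}$ and $\chi_{\mathbb R}$ were constructed, so that \cref{theorem 8.10} is applicable to $A$. Given that the characters are only defined on such algebras, this is a matter of bookkeeping rather than a new difficulty, but it is worth flagging so that the chain of equalities is properly justified. Once this is in place, the argument is a one-line composition of the two previous statements.

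\begin{proof}
By \cref{theorem 8.10}, applied to $A$, we have $\chi_{\mathbb R}(A)=2\,\chi_{\mathbb C}(A)$. On the other hand, \cref{lemma 8.11} gives $\chi_{\mathbb R}(A)=2\,\chi_{\mathbb R}(A_{Re})$. Equating these two expressions for $\chi_{\mathbb R}(A)$ yields $2\,\chi_{\mathbb C}(A)=2\,\chi_{\mathbb R}(A_{Re})$, and dividing by $2$ gives $\chi_{\mathbb C}(A)=\chi_{\mathbb R}(A_{Re})$, as claimed.
\end{proof}
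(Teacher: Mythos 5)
Your proposal is correct and follows exactly the paper's own argument, which is literally the one-line combination of \cref{theorem 8.10} and \cref{lemma 8.11} followed by cancellation of the factor of $2$. Your extra remark about the finitely-generated-K-theory and UCT hypotheses being inherited from the ambient setting is a reasonable bookkeeping point that the paper leaves implicit.
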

\begin{proof}
 Applying Theorem \eqref{theorem 8.10} and Lemma \eqref{lemma 8.11}.
\end{proof}

\begin{example}
Consider $\mathbb{C}$ as a complex $C^{*}-algebra$ then using table 1 and table 2 in (Boersema) we have
$$\chi_{\mathbb{C}}(\mathbb{C})=\chi_{\mathbb{R}}(\mathbb{R})=1.$$
\end{example}

\section{Characters on Grothendieck rings of quaternion C*-algebras}
In this section we first define $\mathbb{H}$-C*-algebras and then introduce a candidate character on their Grothendieck rings.

$A$ is a $\mathbb{H}$-vector space if $A$ is a vector space with respect to the right and left multiplication by $\mathbb{H}$. Then we say that $A$ is a $\mathbb{H}$-algebra is the following properties hold:
 \begin{align*}
\lambda(ab)=(\lambda a)b, \ (a \lambda)b=a(\lambda b), \ and \ (ab)\lambda=a(b\lambda)
 \end{align*}
for every $a, b \in A$ and $\lambda \in \mathbb{H}$.
Moreover, an involution on $A$ is a map $*:A \rightarrow A$ which satisfying:
\begin{align*}
(\lambda a )^{*}=a^{*} \lambda^{*} \ and \ ( a \lambda  )^{*}= \lambda^{*} a^{*}.
\end{align*}
Now suppose that $A$ is an $\mathbb{H}$-involutive algebra with a $\mathbb{H}$-norm $ \|.\|$, then we sat that $A$ is a $\mathbb{H}$-C*-algebra if it is complete under $ \|.\|$ and satisfies the following properties:
\begin{align*}
\|ab\| \leqslant \|a\| \|b\|  \ and  \ \|a^{*}a\|=\|a\|^{2}.
\end{align*}

Let $A$ be a quaternion $C^{*}$-algebra then the real part of $A$ can be recovered by
  $$A_{Re}=\{ a \in A : \lambda a = a \lambda \ for \ every \ \lambda \in \mathbb{H} \}.$$

Now we consider the Grothendieck rings of $\mathbb{H}$-$C^{*}$-algebras. In order to introduce a candidate character on these kinds of Grothendieck rings, if we want to go through the same process the same as complex and real case then we need use quaternion  K\"unneth theorem to show that the character is multiplicative. Such a sequence is in effect provided by Boersema’s Kunneth-type theorem for the united K-theory of $\mathbb{H}$-$C^{*}$-algebras.

\begin{theorem}(Wood-Karoubi)
For any real C*-algebra $A$ the following isomorphisms hold:
\begin{align*}
K_{n}(A) \cong K_{n+8}(A) \ and \ K_{n}(A) \cong K_{n+4}(A \otimes_{\mathbb{R}} \mathbb{H}).
\end{align*}
\end{theorem}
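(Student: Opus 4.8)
The plan is to handle the two isomorphisms separately, since the first is a black box and the second carries all the quaternionic content. The first isomorphism $K_n(A)\isom K_{n+8}(A)$ is precisely real Bott periodicity for $KO$-theory, the classical $8$-fold periodicity that we already invoked to build the length-$24$ cyclic sequence of \cref{cyclic}; I would simply cite it (for instance from Schröder's development of $K$-theory for real C*-algebras \cite{Schroder}) rather than reprove it. For the second isomorphism I would first rewrite it as the statement that tensoring with $\mathbb{H}$ implements a degree shift of $4$, namely $KO_m(A\otimes_{\mathbb{R}}\mathbb{H})\isom KO_{m+4}(A)$. Granting this, one sets $m=n+4$ and combines it with the $8$-periodicity just cited to obtain $K_{n+4}(A\otimes_{\mathbb{R}}\mathbb{H})=KO_{n+8}(A)=KO_n(A)=K_n(A)$, which is exactly the claimed relation.

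To construct the degree-$4$ shift I would pass to the Clifford-algebra description of the real $K$-groups. Writing $\mathrm{Cl}_{0,k}$ for the real Clifford algebra on $k$ generators squaring to $-1$, one has the identifications $\mathrm{Cl}_{0,1}\isom\mathbb{C}$ and $\mathrm{Cl}_{0,2}\isom\mathbb{H}$, and the negative $KO$-groups admit a Karoubi-type description of the shape $KO_{-k}(A)\isom K_0(A\,\hat\otimes\,\mathrm{Cl}_{0,k})$ in terms of the \emph{graded} tensor product, under which graded Clifford algebras add their indices. The subtlety, and the reason the answer is $4$ rather than the naive $2$, is that in $A\otimes_{\mathbb{R}}\mathbb{H}$ the factor $\mathbb{H}$ carries the \emph{trivial} grading, so it does not contribute as the graded algebra $\mathrm{Cl}_{0,2}$; tracking the discrepancy between the ungraded algebra $\mathbb{H}$ and the graded $\mathrm{Cl}_{0,2}$ through the graded-Morita equivalences of the $8$-periodic Clifford table is what converts a shift of $2$ into a shift of $4$. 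This grading bookkeeping is the one genuinely delicate point of the argument.

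Because of that delicacy, I would pin down the magnitude and direction of the shift by a naturality-plus-base-case argument rather than trusting the bookkeeping blindly. Applying $\otimes_{\mathbb{R}}\mathbb{H}$ twice gives $A\otimes_{\mathbb{R}}\mathbb{H}\otimes_{\mathbb{R}}\mathbb{H}\isom A\otimes_{\mathbb{R}}M_4(\mathbb{R})$, which is Morita equivalent to $A$; hence the shift induced by $\mathbb{H}$ is an element $s\in\mathbb{Z}/8$ with $2s\equiv 0$, forcing $s\in\{0,4\}$. To exclude $s=0$ it is enough to evaluate on $A=\mathbb{R}$, where $KO_*(\mathbb{R}\otimes_{\mathbb{R}}\mathbb{H})=KO_*(\mathbb{H})$ is the symplectic $K$-theory $\mathrm{KSp}_*$ of a point, with values $(\Z,0,0,0,\Z,\Z/2,\Z/2,0)$; these agree with $KO_{*+4}$ of a point and differ from $KO_*$ of a point already in degree $1$, so the shift is genuinely $4$. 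The main obstacle is thus not the existence of the shift but fixing its exact value, and the base-case computation on $\mathbb{R}$ is what resolves the graded/ungraded ambiguity cleanly.
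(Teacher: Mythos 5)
The paper offers no proof of this statement at all: it is quoted as a classical theorem of Wood and Karoubi and used as a black box, so there is no argument of the authors' to compare yours against. Your reconstruction is a legitimate and essentially standard route. The first isomorphism is indeed just $8$-fold real Bott periodicity and is correctly delegated to the literature. For the second, your strategy --- establish via the Clifford-algebra description of $KO_{-k}$ that $-\otimes_{\mathbb{R}}\mathbb{H}$ acts on $KO_*$ by a fixed natural degree shift, then pin down that shift by combining $\mathbb{H}\otimes_{\mathbb{R}}\mathbb{H}\cong M_4(\mathbb{R})$ (forcing $2s\equiv 0 \bmod 8$) with the base case $KO_*(\mathbb{H})=\mathrm{KSp}_*(\mathrm{pt})=(\mathbb{Z},0,0,0,\mathbb{Z},\mathbb{Z}/2,\mathbb{Z}/2,0)$ --- is sound, and your table for $KO_*(\mathbb{H})$ agrees with the one the paper itself records later in the same section. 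The one step you leave genuinely schematic is the existence of the natural shift in the first place: the phrase ``the shift induced by $\mathbb{H}$ is an element $s\in\mathbb{Z}/8$'' already presupposes that tensoring with the \emph{ungraded} algebra $\mathbb{H}$ acts on $KO_*(A)$ by a translation uniformly and naturally in $A$, and that is exactly the content of the graded-versus-ungraded Clifford bookkeeping you describe but do not carry out. Once that is granted, the $2s\equiv 0$ observation is a pleasant redundancy and the base case alone fixes $s=4$; without it, neither the Morita argument nor the computation at $A=\mathbb{R}$ says anything about general $A$. So either carry out the graded-Morita comparison of $\mathbb{H}$ with $\mathrm{Cl}_{0,2}$ explicitly, or simply cite Karoubi or Schr\"oder for the natural isomorphism $KO_n(A)\cong KO_{n+4}(A\otimes_{\mathbb{R}}\mathbb{H})$, which is in effect what the paper does.
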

\begin{lemma}\label{complex-H}
$\mathbb{H} \otimes_{\mathbb{R}} \mathbb{C} \cong M_{2}(\mathbb{C})$ as a complex $C^{*}$-algebra.
\end{lemma}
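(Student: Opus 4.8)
The plan is to exhibit an explicit $*$-isomorphism rather than argue abstractly. To keep the two imaginary units apart, write $\iota$ for the imaginary unit of the central copy of $\mathbb{C}$ in $\mathbb{H}\otimes_{\mathbb{R}}\mathbb{C}$, reserving $i$ for the quaternion generator. First I would define an $\mathbb{R}$-algebra homomorphism $\Phi\colon \mathbb{H}\to M_2(\mathbb{C})$ on the quaternion basis by
\[
\Phi(1)=\begin{pmatrix}1&0\\0&1\end{pmatrix},\quad
\Phi(i)=\begin{pmatrix}\iota&0\\0&-\iota\end{pmatrix},\quad
\Phi(j)=\begin{pmatrix}0&1\\-1&0\end{pmatrix},\quad
\Phi(k)=\begin{pmatrix}0&\iota\\\iota&0\end{pmatrix}.
\]
A direct check of the Hamilton relations $\Phi(i)^2=\Phi(j)^2=\Phi(k)^2=-I$ and $\Phi(i)\Phi(j)=\Phi(k)$ confirms that $\Phi$ is a homomorphism of real algebras. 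Because the central $\mathbb{C}$ maps to the scalar matrices, the universal property of the tensor product lets me extend $\Phi$ to a $\mathbb{C}$-algebra homomorphism $\widetilde{\Phi}\colon \mathbb{H}\otimes_{\mathbb{R}}\mathbb{C}\to M_2(\mathbb{C})$ by $\widetilde{\Phi}(q\otimes z)=z\,\Phi(q)$.

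Next I would establish that $\widetilde{\Phi}$ is bijective. The four matrices $\Phi(1),\Phi(i),\Phi(j),\Phi(k)$ are $\mathbb{C}$-linearly independent and hence span $M_2(\mathbb{C})$, so $\widetilde{\Phi}$ is surjective; since both sides have complex dimension $4$, it is an isomorphism of complex algebras. It then remains to check that it intertwines the involutions, which is what upgrades it to a $C^{*}$-isomorphism. The involution on the complexification $\mathbb{H}\otimes_{\mathbb{R}}\mathbb{C}$ is $(q\otimes z)^{*}=q^{*}\otimes\bar z$, where $q^{*}$ denotes quaternion conjugation, and the involution on $M_2(\mathbb{C})$ is the conjugate transpose. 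A generator-by-generator comparison, for instance $\Phi(i)^{*}=\Phi(-i)=\Phi(i^{*})$ and likewise for $j$ and $k$, shows $\Phi(q^{*})=\Phi(q)^{*}$, and extending $\mathbb{C}$-antilinearly gives $\widetilde{\Phi}((q\otimes z)^{*})=\widetilde{\Phi}(q\otimes z)^{*}$. Since a $*$-isomorphism of $C^{*}$-algebras is automatically isometric, no separate verification of norms is needed.

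An alternative would be the abstract route: $\mathbb{H}$ is a central simple $\mathbb{R}$-algebra, so $\mathbb{H}\otimes_{\mathbb{R}}\mathbb{C}$ is central simple over $\mathbb{C}$ of dimension $4$, forcing $\mathbb{H}\otimes_{\mathbb{R}}\mathbb{C}\cong M_2(\mathbb{C})$ by Wedderburn's theorem. I would nonetheless prefer the explicit construction, because the lemma asserts an isomorphism of $C^{*}$-algebras while the abstract argument produces only an algebra isomorphism. The genuine content to watch is precisely the compatibility with the $*$-operation, together with the bookkeeping of the two distinct imaginary units; this is the only place the $C^{*}$-structure actually enters, so I would treat it as the crux of the proof even though the computation itself is routine.
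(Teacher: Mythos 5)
Your proposal is correct and follows essentially the same route as the paper, which also exhibits an explicit embedding $\mathbb{H}\hookrightarrow M_2(\mathbb{C})$ (there written as $z+wj\mapsto\bigl(\begin{smallmatrix}\bar z&\bar w\\-w&z\end{smallmatrix}\bigr)$, the same map as yours up to conjugation) and extends it $\mathbb{C}$-linearly over the tensor product. Your write-up is in fact more complete than the paper's one-line proof, since you verify the Hamilton relations, the dimension count, and the compatibility with the involution, the last of which the paper leaves implicit.
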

\begin{proof}
Consider the map $\phi: \mathbb{H} \otimes_{\mathbb{R}} \mathbb{C} \rightarrow M_{2}(\mathbb{C})$ defined by
$$ \phi(u\otimes_{\mathbb{R}}1)=\begin{psmallmatrix} \bar{z} & \bar{w}\\ -w & z\end{psmallmatrix}$$
where $u \in \mathbb{H}$ and $z,w \in \mathbb{C}$, then $\phi$ is an isomorphism.
\end{proof}

\begin{theorem}
Let $A$ be a $\mathbb{H}$-$C^{*}$-algebra, then $\chi_{\mathbb{R}}(A)=\chi_{\mathbb{R}}(A_{Re})$.
\end{theorem}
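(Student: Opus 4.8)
The plan is to follow the template of \cref{lemma 8.11}, with $\mathbb{H}$ playing the role that $\mathbb{C}$ played there. The first step is the structural fact that a quaternion C*-algebra is recovered from its real part by quaternionification, namely $A \cong A_{Re}\otimes_{\mathbb{R}}\mathbb{H}$ as real C*-algebras, in exact parallel with the identity $A\cong A_{Re}\otimes_{\mathbb{R}}\mathbb{C}$ for a complex C*-algebra carrying a real structure. This holds because $A_{Re}$ is by definition the commutant in $A$ of the scalar copy of $\mathbb{H}$, and every element of $A$ then has a unique expansion $a_0 + a_1 i + a_2 j + a_3 k$ with $a_0,a_1,a_2,a_3\in A_{Re}$; one checks the one-line examples $\mathbb{H}_{Re}=\mathbb{R}$ and $M_n(\mathbb{H})_{Re}=M_n(\mathbb{R})$ to confirm the normalization.

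Granting this decomposition, I would invoke the multiplicativity of the real character proved above. Since $\mathbb{H}$ is finite dimensional, it lies in $N_{\mathbb{R}}$ and has finitely generated united K-theory, so the hypotheses of the multiplicativity theorem are satisfied with $\mathbb{H}$ as the distinguished tensor factor. Hence
\[
\chi_{\mathbb{R}}(A)=\chi_{\mathbb{R}}(A_{Re}\otimes_{\mathbb{R}}\mathbb{H})=\chi_{\mathbb{R}}(A_{Re})\,\chi_{\mathbb{R}}(\mathbb{H}),
\]
and the theorem reduces to the single computation $\chi_{\mathbb{R}}(\mathbb{H})=1$, just as \cref{lemma 8.11} reduced to $\chi_{\mathbb{R}}(\mathbb{C})=2$.

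The remaining work is to evaluate $\chi_{\mathbb{R}}(\mathbb{H})$ by computing the three constituents of $K^{\text{CRT}}(\mathbb{H})$ and comparing their even/odd free ranks termwise with those of $\mathbb{R}$. For the complex constituent, \cref{complex-H} gives $KU_*(\mathbb{H})=K_*(\mathbb{H}\otimes_{\mathbb{R}}\mathbb{C})=K_*(M_2(\mathbb{C}))=K_*(\mathbb{C})=KU_*(\mathbb{R})$ by Morita invariance. For the real constituent, the Wood--Karoubi isomorphism $K_n(\mathbb{R})\cong K_{n+4}(\mathbb{R}\otimes_{\mathbb{R}}\mathbb{H})$ identifies $KO_*(\mathbb{H})$ with the degree-$4$ shift of $KO_*(\mathbb{R})$; this permutes the torsion summands but carries the two free summands $\mathbb{Z}$ (in degrees $0$ and $4$) to degrees $4$ and $0$, so both remain in the even part and the even and odd free ranks are unchanged. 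For the self-conjugate constituent, the same isomorphism applied to $T$ yields a degree-$4$ shift of $KT_*$, which is invisible because $KT$ has period $4$; thus $KT_*(\mathbb{H})=KT_*(\mathbb{R})$. With all three united K-theories sharing the even/odd free ranks of $\mathbb{R}$, the definition of the real character in \cref{real character} gives $\chi_{\mathbb{R}}(\mathbb{H})=\chi_{\mathbb{R}}(\mathbb{R})=1$, the last equality being the value of a character on the unit of the real Grothendieck ring (consistent with the earlier computation $\chi_{\mathbb{R}}(\mathbb{R})=1$).

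I expect the $KO$ bookkeeping to be the main obstacle: the degree-$4$ shift moves the torsion groups $\mathbb{Z}_2$ between odd and even degrees, so one must verify carefully that it is precisely the two free copies of $\mathbb{Z}$ in degrees $0$ and $4$ that are exchanged and remain within the even half, with no free rank migrating to the odd half. As an independent check, the isomorphism $\mathbb{H}\otimes_{\mathbb{R}}\mathbb{H}\cong M_4(\mathbb{R})$ forces $\chi_{\mathbb{R}}(\mathbb{H})^2=\chi_{\mathbb{R}}(M_4(\mathbb{R}))=\chi_{\mathbb{R}}(\mathbb{R})=1$, so $\chi_{\mathbb{R}}(\mathbb{H})=\pm 1$; the termwise computation fixes the sign to $+1$.
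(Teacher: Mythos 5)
Your proof follows the paper's argument essentially verbatim: the same decomposition $A\cong A_{Re}\otimes_{\mathbb{R}}\mathbb{H}$, the same reduction via multiplicativity of $\chi_{\mathbb{R}}$ to the single computation $\chi_{\mathbb{R}}(\mathbb{H})=1$, and the same evaluation of $K^{\mathrm{CRT}}(\mathbb{H})$ using the Wood--Karoubi periodicity and $\mathbb{H}\otimes_{\mathbb{R}}\mathbb{C}\cong M_{2}(\mathbb{C})$. The only cosmetic differences are that you obtain $KT_{*}(\mathbb{H})$ by applying the degree-$4$ shift directly to $T$ (exploiting the period-$4$ structure) where the paper runs the $KU$--$KT$ long exact sequence, and you add a useful sanity check via $\mathbb{H}\otimes_{\mathbb{R}}\mathbb{H}\cong M_{4}(\mathbb{R})$.
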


\begin{proof}
Since $A=A_{Re} \otimes_{\mathbb{R}} \mathbb{H}$, we have $\chi_{\mathbb{R}}(A)=\chi_{\mathbb{R}}(A_{Re} \otimes_{\mathbb{R}} \mathbb{H})=\chi_{\mathbb{R}}(A_{Re}) \chi_{\mathbb{R}}(\mathbb{H})$. Now we calculate $\chi_{\mathbb{R}}(\mathbb{H})$, which means that we need to calculate $K^{\text{CRT}\text{CRT}}(\mathbb{H})$. The real K-theory $KO_{n}(\mathbb{H})=K_{n}(\mathbb{H})$ is given as below
\begin{table}[!h]
\begin{center}
\begin{tabular}{l l l l l l l l l }
   \label{p1}&0   & 1&2&3 &4 &5 &6&7 \\
    $K_{n}(\mathbb{H})$ &$\mathbb{Z}$ & 0& 0& 0 & $\mathbb{Z}$ &$\mathbb{Z}/2$ & $\mathbb{Z}/2$& 0\\
\end{tabular}
\end{center}
\end{table}

$KU_{n}(\mathbb{H})=K_{n} ( \mathbb{H}\otimes_{\mathbb{R}} \mathbb{C})$, but by Lemma \eqref{complex-H} $K_{n}(\mathbb{H}\otimes_{\mathbb{R}} \mathbb{C}))=K_{n}(M_{2}(\mathbb{C}))$, therefore we have

\begin{table}[!h]
\begin{center}
\begin{tabular}{l l l l l l l l l }
   \label{p1}&0   & 1&2&3 &4 &5 &6&7 \\
    $KU_{n}(\mathbb{H})$ &$\mathbb{Z}$ & 0& $\mathbb{Z}$& 0 & $\mathbb{Z}$ &0 & $\mathbb{Z}$& 0\\
\end{tabular}
\end{center}
\end{table}

In order to calculate $KT_{n}(\mathbb{H})$, we apply the sequence (Boersema)
\begin{align*}
... \rightarrow KU_{n+1}(A) \xrightarrow{\gamma} KT_{n}(A) \xrightarrow{\zeta} KU_{n}(A) \xrightarrow{1-\psi_{U}} KU_{n}(A) \rightarrow  ... .
\end{align*}
where $A$ is a real C*-algebra, now take $A=\mathbb{R}$ and then using the Wood-Karoubi theorem we obtain
\begin{table}[!h]
\begin{center}
\begin{tabular}{l l l l l l l l l }
   \label{p1}&0   & 1&2&3 &4 &5 &6&7 \\
    $KT_{n}(\mathbb{H})$ &$\mathbb{Z}$ & $\mathbb{Z}/2$& 0& $\mathbb{Z}$ & $\mathbb{Z}$ &$\mathbb{Z}/2$& 0& $\mathbb{Z}$\\
\end{tabular}
\end{center}
\end{table}

using above $K_{n}(\mathbb{H})$, $KU_{n}(\mathbb{H})$ and $KT_{n}(\mathbb{H})$, then $\chi_{\mathbb{R}}(\mathbb{H})=1$ which implies that $\chi_{\mathbb{R}}(A)=\chi_{\mathbb{R}}(A_{Re})$.
\end{proof}
Thus, we have defined and computed characters on Grothedieck rings of C*-algebras over $\mathbb{R}$, $\mathbb{C}$, and $\mathbb{H}$. Grothendieck rings of C*-algebras have been studied very little, but there is a more developed theory in the setting of classical algebraic geometry, and in that case, the existence of characters was the key step towards showing that the classic Grothendieck ring of varieties is not the trivial ring $\{0\}.$ In our more complicated case, we have succeeded in showing that some  Grothedieck rings of C*-algebras are trivial (Theorem 2.1) but that in some other cases, there exist nontrivial Grothendieck rings of C*-algebras over $\mathbb{R}$, $\mathbb{C}$, and $\mathbb{H}$.

\centerline{{\huge {\rotatebox[]{270}{\textdagger}}\!{\rotatebox[]{90}{\textdagger}}}}\normalsize

\end{document}